\newcommand{\nc}{\newcommand}
\nc{\ggl}{\mathfrak{g}}
\nc{\Gm}{\Gl(\mg)}
\nc{\Gg}{\Gl(\ggl)}
\nc{\glgg}{{\mathfrak{gl}(\mathfrak{g})}}
\nc{\GHm}{\Gl^{\!H} (\mg)}
\nc{\GHmk}{\Gl^H(\mg_k)}
\nc{\gHm}{{\mathfrak{gl}^H(\mathfrak{m})}}
\nc{\OHm}{\Or^H(\mg)}
\nc{\sogHm}{ {\mathfrak{so}^H(\mathfrak{m})} }
\nc{\Vm}{V(\mg)}
\nc{\Vg}{V(\ggl)}
\nc{\Vmi}{V(\mg_\infty)}
\nc{\Om}{\Or(\mg)}
\nc{\Og}{\Or(\ggl)}
\nc{\Symm}{{\rm Sym}(\mg)}
\nc{\Symg}{{\rm Sym}(\ggl)}
\nc{\gm}{\mathfrak{gl}(\mg)}
\nc{\som}{\sog(\mg)}
\nc{\sogg}{{\mathfrak{so}(\mathfrak{g})}}
\nc{\hml}{{\mu^{\ggl}}}
\nc{\ml}{{\mu^{\ggl}_{\mg}}}
\nc{\Ol}{{\mathcal{O}_{\ggl}}}
\nc{\OlH}{{\mathcal{O}^H_{\ggl}}}
\nc{\Vn}{V(n)}
\nc{\SymV}{{\rm Sym}(V)}
\nc{\mub}{{\bar \mu}}
\nc{\mumg}{{\mu_\mg}}
\nc{\Betam}{{\Beta_\mg}}
 \nc{\Sym}{\operatorname{Sym}} 
 \nc{\vmin}{{v_{\mathsf{min}}}}
 \nc{\iph}{\la \! \la \cdot, \cdot \ra \! \ra}
\nc{\Gl}{\mathsf{GL}} \nc{\Or}{\mathsf{O}}  \nc{\SO}{\mathsf{SO}}   \nc{\Sl}{\mathsf{SL}}  
\nc{\G}{\mathsf{G}} \nc{\K}{\mathsf{K}}  \nc{\T}{\mathsf{T}} \nc{\Lsf}{\mathsf{L}}
\nc{\Qb}{\mathsf{Q}_\Beta} \nc{\Hb}{\mathsf{H}_\Beta} \nc{\Ub}{\mathsf{U}_\Beta} 
\nc{\Gb}{\mathsf{G}_\Beta} \nc{\Kb}{\mathsf{K}_\Beta}
\nc{\PPP}{\mathsf{P}}
\nc{\U}{\mathsf{U}}\nc{\Q}{\mathsf{Q}}
\nc{\LL}{\mathsf{L}}
 \nc{\ggo}{\mathfrak{g}}
 \nc{\ggob}{\overline{\mathfrak{g}}}
\nc{\lamg}{\Lambda^2\ggo^*\otimes\ggo}
\nc{\gkp}{(\ggo=\kg\oplus\pg,\ip)} \nc{\ukh}{(\ug=\kg\oplus\hg,\ip)}
\nc{\tgkp}{(\tilde{\ggo}=\kg\oplus\pg,\ip)}
\nc{\fg}{\mathfrak{f}}  \nc{\vg}{\mathfrak{v}} \nc{\wg}{\mathfrak{w}} \nc{\zg}{\mathfrak{z}} \nc{\ngo}{\mathfrak{n}} \nc{\kg}{\mathfrak{k}} \nc{\mg}{\mathfrak{m}} \nc{\bg}{\mathfrak{b}}  \nc{\sog}{\mathfrak{so}} \nc{\sug}{\mathfrak{su}} \nc{\spg}{\mathfrak{sp}} \nc{\slg}{\mathfrak{sl}} \nc{\glg}{\mathfrak{gl}} \nc{\cg}{\mathfrak{c}} \nc{\rg}{\mathfrak{r}}  \nc{\hg}{\mathfrak{h}} \nc{\tgo}{\mathfrak{t}} \nc{\ug}{\mathfrak{u}} \nc{\dg}{\mathfrak{d}} \nc{\ag}{\mathfrak{a}} \nc{\pg}{\mathfrak{p}} \nc{\sg}{\mathfrak{s}} \nc{\affg}{\mathfrak{aff}} \nc{\qg}{\mathfrak{q}}
\nc{\Xg}{\mathfrak{X}} \nc{\lgo}{\mathfrak{l}} \nc{\tg}{\mathfrak{t} }
\nc{\pca}{\mathcal{P}} \nc{\nca}{\mathcal{N}} \nc{\lca}{\mathcal{L}} \nc{\oca}{\mathcal{O}} \nc{\mca}{\mathcal{M}} \nc{\tca}{\mathcal{T}} \nc{\aca}{\mathcal{A}} \nc{\cca}{\mathcal{C}} \nc{\gca}{\mathcal{G}} \nc{\sca}{\mathcal{S}} \nc{\hca}{\mathcal{H}} \nc{\bca}{\mathcal{B}} \nc{\dca}{\mathcal{D}} \nc{\fca}{\mathcal{F}} \nc{\Qca}{\mathcal{Q}}
\nc{\dd}{{\rm d}}  \nc{\ddt}{\tfrac{{\rm d}}{{\rm d}t}}        \nc{\dds}{\tfrac{{\rm d}}{{\rm d}s}} 
\nc{\ddtbig}{\frac{{\rm d}}{{\rm d}t}}      \nc{\dpar}{\tfrac{\partial}{\partial t}}    
\nc{\im}{\mathtt{i}}    
  \nc{\Spe}{\mathrm{Sp}}          
\nc{\SU}{\mathrm{SU}}        
\nc{\Se}{\mathrm{S}}    \nc{\Cl}{\mathrm{Cl}}       \nc{\Spein}{\mathrm{Spin}}
\nc{\Pin}{\mathrm{Pin}} 
\nc{\RR}{{\mathbb R}} \nc{\HH}{{\mathbb H}} \nc{\CC}{{\mathbb C}} \nc{\ZZ}{{\mathbb Z}}
\nc{\FF}{{\mathbb F}} \nc{\NN}{{\mathbb N}} \nc{\QQ}{{\mathbb Q}} \nc{\PP}{{\mathbb P}}
\nc{\vs}{\vspace{.2cm}} \nc{\vsp}{\vspace{1cm}} 
\nc{\ip}{{\langle \,\cdot \,,\cdot \,\rangle }}
 \nc{\la}{\langle} \nc{\ra}{\rangle} \nc{\unm}{\tfrac{1}{2}}
\nc{\unc}{\tfrac{1}{4}} \nc{\und}{\tfrac{1}{16}} \nc{\no}{\vs\noindent}
\nc{\lam}{\Lambda^2(\RR^n)^*\otimes\RR^n} \nc{\tangz}{{\rm T}^{\rm Zar}}
\nc{\nor}{{\sf n}}  \nc{\mum}{/\!\!/} \nc{\kir}{/\!\!/\!\!/}
\nc{\Ri}{\tfrac{4\Ric_{\mu}}{||\mu||^2}} \nc{\ds}{\displaystyle}
\nc{\ben}{\begin{enumerate}} \nc{\een}{\end{enumerate}} \nc{\f}{\frac}
\nc{\lb}{[\cdot,\cdot]} \nc{\isn}{\tfrac{1}{||v||^2}}
\nc{\wt}{\widetilde}
\nc{\raw}{\rightarrow} \nc{\lraw}{\longrightarrow} \nc{\hqn}{\mathcal{H}_{q,n}}
\nc{\minimatrix}[4]{\left[\begin{smallmatrix} {#1} & {#2} \\ {#3} & {#4} \end{smallmatrix}\right]}
\nc{\twomatrix}[4]{\left[\begin{array}{cc} {#1} & {#2} \\ {#3} & {#4} \end{array} \right]}
\nc{\threematrix}[9]{\left[\begin{array}{ccc} {#1} & {#2} & {#3} \\ {#4} & {#5} & {#6}\\ {#7} & {#8} & {#9} \end{array} \right]}
\nc{\mut}{\tilde{\mu}} \nc{\mur}{{\mu_r}} \nc{\mutr}{{\tilde{\mu}_r}}
\nc{\alert}{\color{blue}}
\nc{\glgan}{\minimatrix{0}{0}{\star}{0}} \nc{\glgna}{\minimatrix{0}{\star}{0}{0}}  \nc{\glgnn}{\minimatrix{0}{0}{0}{\star}}  \nc{\glgaa}{\minimatrix{\star}{0}{0}{0}}
\nc{\Vaan}{{\left(\ag \wedge \ag\right)^* \otimes \ngo}} \nc{\Vann}{{\left(\ag \otimes \ngo \right)^* \otimes \ngo}} \nc{\Vnnn}{{\left(\ngo \wedge \ngo \right)^* \otimes \ngo}}
\nc{\ad}{\operatorname{ad}}  \nc{\Aut}{\operatorname{Aut}}   \nc{\Inn}{\operatorname{Inn}}   \nc{\Lie}{\operatorname{Lie}} \nc{\Ad}{\operatorname{Ad}} \nc{\Der}{\operatorname{Der}} \nc{\rad}{\operatorname{rad}} \nc{\kf}{\operatorname{B}}
\nc{\End}{\operatorname{End}} \nc{\rank}{\operatorname{rank}} \nc{\Ker}{\operatorname{Ker}} \nc{\tr}{\operatorname{tr}} \nc{\sym}{\operatorname{sym}} \nc{\diag}{\operatorname{diag}} \nc{\proy}{\operatorname{pr}} \nc{\Adj}{\operatorname{Adj}} \nc{\proj}{\operatorname{pr}} \nc{\Id}{{\operatorname{Id}}} \nc{\Span}{\operatorname{span}}
\nc{\Hess}{\operatorname{Hess}}  \nc{\dif}{\operatorname{d}} \nc{\sen}{\operatorname{sen}} \nc{\grad}{\operatorname{grad}} \nc{\Order}{\operatorname{O}} \nc{\divg}{\operatorname{div}}
\nc{\Iso}{\operatorname{Iso}} \nc{\Diff}{\operatorname{Diff}} \nc{\Rc}{\operatorname{Rc}} \nc{\Ricci}{\operatorname{Ric}}
\nc{\ric}{\operatorname{ric}} 
\nc{\Riem}{\operatorname{Rm}} \nc{\scal}{\operatorname{scal}} \nc{\scalm}{\operatorname{scal}^\star} \nc{\Riccim}{\operatorname{Ric}^{\star}} \nc{\tang}{\operatorname{T}} \nc{\vol}{\operatorname{vol}} \nc{\mcv}{\operatorname{H}} \nc{\inj}{\operatorname{inj}}
\nc{\isog}{\mathfrak{iso}}
\nc{\mm}{\operatorname{M}} \nc{\CH}{\operatorname{CH}} \nc{\Irr}{\operatorname{Irr}} \nc{\mcc}{\operatorname{mcc}} \nc{\Sb}{\mathcal{S}_\Beta} \nc{\mmm}{\operatorname{m}} 
\nc{\Beta}{{\beta}}
\nc{\Alpha}{A}
\nc{\Vr}{V_{\Beta^+}^{r}}
\nc{\Vzero}{V_{\Beta^+}^{0}}
\nc{\Vnn}{V_{\Beta^+}^{\geq 0}}
\nc{\Vnnss}{U_{\Beta^+}^{\geq 0}}
\nc{\Vzeross}{U_{\Beta^+}^{0}}
\nc{\Betap}{ {\Beta + \Vert{\Beta}\Vert^2 \Id} }
\nc{\Ap}{ {A + \Vert{A}\Vert^2 \Id} }
\nc{\zero}{ {\backslash \{0\} } }
\nc{\normmm}{{\rm F}}
\nc{\ipp}{\la\,\cdot \,,\cdot\,\ra^*_g}
\nc{\ippk}{\la\,\cdot \,,\cdot\,\ra^*_{g_k}}
\nc{\ippi}{\la\,\cdot \,,\cdot\,\ra^*_{g_\infty}}
\nc{\ipnew}{\la \la \cdot , \cdot \ra\ra}
\nc{\der}{\mathfrak{der}}
\nc{\kfm}{\widetilde{\kf}} 
\nc{\KFm}{\widetilde{\mathcal{B}}}
\nc{\KF}{\mathcal{B}}
\nc{\II}{{\mathbb I}}
\nc{\spa}{\operatorname{span}}
\theoremstyle{plain}
\newtheorem{theorem}{Theorem}[section]
\newtheorem{proposition}[theorem]{Proposition}
\newtheorem{corollary}[theorem]{Corollary}
\newtheorem{lemma}[theorem]{Lemma}
\theoremstyle{definition}
\newtheorem{definition}[theorem]{Definition}
\newtheorem{notation}[theorem]{Notation}
\theoremstyle{remark}
\newtheorem{remark}[theorem]{Remark}
\title{Real Geometric Invariant Theory}
\author{Christoph B\"ohm}
\address{University of M\"unster, Einsteinstra{\ss}e 62, 48149 M\"unster, Germany}
\email{cboehm@math.uni-muenster.de}
\author{Ramiro A.~ Lafuente}
\address{University of M\"unster, Einsteinstra{\ss}e 62, 48149 M\"unster, Germany}
\email{lafuente@uni-muenster.de}
\thanks{The second author was supported by the Alexander von Humboldt Foundation.}
\begin{document}

\begin{abstract}
For linear actions of real reductive Lie groups we prove 
the Kempf-Ness Theorem about closed orbits and the Kirwan-Ness Stratification Theorem of the null cone.
Since our completely self-contained proof focuses strongly on geometric and analytic methods,
essentially avoiding any deep algebraic result, it applies also to non-rational linear actions.
\end{abstract}

\maketitle

\tableofcontents

\section{Introduction}

The Kempf-Ness Theorem provides a beautiful and simple geometric criterion for the closedness
of  orbits of a holomorphic representation of a complex reductive Lie group \cite{KN79}. It implies that a non-closed orbit with positive distance to the origin contains a non-trivial closed orbit in its closure. 
For orbits in the \emph{null cone}, being the union of all the orbits containing the origin
in its closure, this is no longer true. The Kirwan-Ness Theorem describes a Morse-type stratification of the null cone into finitely many invariant submanifolds, 
with respect to a natural energy functional associated to the moment map of the action  \cite{Krw1}, \cite{Ness}. 

In 1990, Richardson and Slodowy showed that the Kempf-Ness Theorem extends to the case of real reductive Lie groups acting linearly on Euclidean vector spaces \cite{RS90}; see also  \cite{Mar2001}, \cite{HSchw07}, \cite{EbJbl09}, \cite{BZ16}. Later on, the Stratification Theorem was extended to the case of real reductive Lie groups by Lauret \cite{standard},
and by Heinzner, Schwarz and St\"otzel to the more general setting of actions on complex spaces \cite{HSS}.
The proof of most of these results rely on those of the complex case,
thus making use of deep results from the theory of algebraic groups (e.g.~ \cite{Most55}, \cite{BHC62}, \cite{BT65}, \cite{Birkes}) or of complex spaces.

The aim of this article is to provide completely self-contained 
proofs of the Kempf-Ness Theorem and the Kirwan-Ness Stratification Theorem for linear actions of real reductive Lie groups.
This is achieved by adapting to our context some of the existing proofs, together with a detailed understanding of the particular case of abelian groups. 



Our setup is as follows:
let $\rho : \G \to \Gl(V)$ be a faithful representation of the real Lie group $\G$ on a real,
finite-dimensional   vector space $V$, with $\rho(\G)$ closed 
in $\Gl(V)$. To simplify notation, in what follows we will suppress $\rho$. We say that a closed subgroup $\G\subset \Gl(V)$
is a \emph{real reductive Lie group} if
there exists a scalar product $\ip$ on $V$ such that 
\begin{equation}\label{eqn_Cartandec}
	\G = \K \cdot \exp(\pg), 
\end{equation}
where $\K:=\G \cap \Or(V,\ip)$, $\pg:= \ggo \cap \Sym(V,\ip)$, $\ggo$ denotes the Lie algebra of $\G$, and $\exp:\glg(V) \to \Gl(V)$ the Lie exponential map. Here $\Or(V,\ip)$ denotes the group of orthogonal linear maps in $\Gl(V)$ and $\Sym(V,\ip)$ the set of symmetric endomorphisms of $V$.  
The maximal compact subgroup of $\G$ is
$\K$, and at Lie algebra level \eqref{eqn_Cartandec} yields a Cartan decomposition $\ggo = \kg \oplus \pg$, that is $[\kg,\pg]\subset \pg$ and $[\pg,\pg]\subset \kg$.
Let us mention
that there are several non-equivalent definitions of real reductive Lie group
in the literature. We refer the reader to Appendix \ref{app_reductive} for a comparison between these definitions and ours.

The group $\Gl(V)$ itself is real reductive, and so is any
faithful, finite-dimensional representation of a real semisimple Lie group $\G$ with finitely many components \cite{Most55}. The same is true for $\Gl_n(\RR)$, provided the center acts by semisimple endomorphisms. For $\G \subset \Gl(V)$ connected, \eqref{eqn_Cartandec} is equivalent to saying that its Lie algebra is closed under transpose, see \cite[Prop.~7.14]{Knapp2e}.
 In contrast to these examples, notice that the action of a nilpotent, non-abelian Lie group is never real reductive. 

We turn now to the Kempf-Ness Theorem.
A vector $\bar v \in \G\cdot v\subset V$ is called a \emph{minimal vector},
if it minimizes the distance to $0\in V$ within the orbit $\G \cdot v$.
If we denote by $\mca \subset V$ the set of all minimal vectors, it is clear that
any closed orbit must intersect $\mca$. Conversely, we have the following 

\begin{theorem}\label{thm_realGIT}
For a real reductive Lie group $\G$ acting linearly on $(V,\ip)$ the following holds:
\begin{itemize}
	\item[(i)] Any orbit $\G\cdot v$ containing a minimal vector $\bar v$ is closed, and moreover we have that $\G\cdot \bar v \cap \mca = \K \cdot \bar v$.
	\item[(ii)] If the orbit $\G\cdot v$ is not closed, then there exists $\alpha \in \pg$ such that 
	the limit $w = \lim_{t\to\infty}\exp(t \alpha) \cdot v$ exists, and the orbit $\G\cdot w$ is closed.
	\item[(iii)] The closure of any orbit contains exactly one closed orbit.
	\item[(iv)] The null cone $\nca = \big\{ v\in V : 0\in \overline{\G\cdot v}  \big\}$ is a closed subset of $V$.
\end{itemize}
\end{theorem}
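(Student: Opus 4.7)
The plan is to reduce everything to convex analysis on $\pg$. For each $v\in V$ the central object is
\[ f_v(\alpha) \;:=\; \|\exp(\alpha)\cdot v\|^2, \qquad \alpha\in\pg. \]
Since $\K\subset\Or(V,\ip)$ acts by isometries, $\|g\cdot v\|^2=f_v(\alpha)$ whenever $g=k\exp(\alpha)$, so the infimum of $g\mapsto\|g\cdot v\|^2$ over $\G$ coincides with the infimum of $f_v$ over $\pg$. The single analytic input driving everything is that, via a simultaneous eigendecomposition of any $\alpha\in\pg$ on $V$,
\[ f_v(t\alpha) \;=\; \sum_i e^{2t\lambda_i}\|v_i\|^2 \]
is convex in $t$, and strictly convex unless $\alpha$ acts by a single scalar on $\spa\{v_i:v_i\ne 0\}$.

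For part (i), if $\bar v$ is minimal then for every $\alpha\in\pg$ the convex function $t\mapsto f_{\bar v}(t\alpha)$ has a critical point at $t=0$, hence a global minimum there. Combined with $\K$-invariance this gives $\|g\bar v\|\ge\|\bar v\|$ for all $g\in\G$. If $k\exp(\alpha)\bar v$ is also minimal, strict convexity forces $\alpha\cdot\bar v=0$, so the element equals $k\bar v$. Closedness of $\G\cdot\bar v$ follows from a properness argument: given $g_n\bar v\to w$ with $g_n=k_n\exp(\alpha_n)$ and $k_n\to k_\infty$ after passing to a subsequence, the convergence of $f_{\bar v}(\alpha_n)$ together with the eigen-expansion above forces $\alpha_n$ to stay bounded modulo $\{\alpha\in\pg:\alpha\bar v=0\}$, so $\exp(\alpha_n)\bar v$ converges inside the orbit.

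Part (ii) is the crux. For a non-closed orbit $\G\cdot v$ take a minimizing sequence $g_n\in\G$ of $g\mapsto\|g\cdot v\|^2$ and write $g_n=k_n\exp(\alpha_n)$ with $k_n\to k_\infty$. If $\alpha_n$ were bounded, a limit would produce a minimal vector in $\G\cdot v$, closing the orbit by (i), a contradiction. Thus $\|\alpha_n\|\to\infty$, and the candidate optimal direction is any limit point $\alpha_\infty\in\pg$ of $\alpha_n/\|\alpha_n\|$. The goal is then to show that $\exp(t\alpha_\infty)\cdot v$ actually converges as $t\to+\infty$ to a point in a closed orbit. This I expect to be the main obstacle, since it plays the role of the Hilbert--Mumford criterion in algebraic GIT without access to algebraic one-parameter subgroups. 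Concretely, placing $\alpha_\infty$ inside a maximal abelian subspace $\ag\subset\pg$ and using the weight decomposition $v=\sum_\lambda v_\lambda$, the question reduces to convex geometry on the weight polytope $\mathrm{conv}\{\lambda:v_\lambda\ne 0\}\subset\ag^*$: the minimizing property of $g_n$ should force every weight with $\lambda(\alpha_\infty)>0$ to satisfy $v_\lambda=0$, which in turn makes the flow converge.

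Finally, (iii) should follow by applying (ii) to produce from $v$ a degeneration to a closed orbit $\G\cdot w$, and then showing that any other closed orbit in $\overline{\G\cdot v}$ has a minimal vector $\K$-conjugate to that of $\G\cdot w$, using the uniqueness in (i) and a convexity argument comparing two minimizing sequences. For (iv) I would argue directly from (ii): if $v_n\in\nca$ with $v_n\to v_\infty$, choose $h_n\in\G$ with $\|h_n\cdot v_n\|\to 0$, write $h_n=k_n\exp(\alpha_n)$ with $k_n\to k_\infty$, and apply the same dichotomy---either $\alpha_n$ stays bounded, placing $0$ directly in $\overline{\G\cdot v_\infty}$, or $\alpha_n/\|\alpha_n\|$ converges to an $\alpha_\infty\in\pg$ whose one-parameter subgroup contracts $v_\infty$ to $0$---to conclude $v_\infty\in\nca$.
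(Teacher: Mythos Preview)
Your approach via convex analysis of the Kempf--Ness function $f_v$ is natural, and pieces of it are fine: the characterisation $\G\cdot\bar v\cap\mca=\K\cdot\bar v$ in (i) follows exactly as you say, and in (ii) the weight argument does prove that the flow $\exp(t\alpha_\infty)\cdot v$ converges. But there is a genuine gap in (ii): the limiting direction $\alpha_\infty$ of a minimising sequence need \emph{not} land you in a closed orbit. Take $\T=\RR^2$ acting on $\RR^3$ with weights $(1,0),(0,1),(1,1)$ and $v=(1,1,1)$. Then $\alpha_n=(-n,-n^2)$ is a minimising sequence (the infimum is $0$), $\alpha_n/\|\alpha_n\|\to(0,-1)$, yet $\exp(t(0,-1))\cdot v\to(1,0,0)$, whose $\T$-orbit is again non-closed. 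Your argument only gives $\lambda(\alpha_\infty)\le 0$ for all active weights; closedness of the limit would require $0$ to lie in the relative interior of $\{\lambda:\lambda(\alpha_\infty)=0\}$, which you never establish. The same defect undermines your sketches for (iii) and (iv). A secondary gap: the properness step for closedness in (i) tacitly assumes $f_{\bar v}$ is convex on all of $\pg$, but the eigen-expansion you wrote is only valid along rays through $0$, and $f_{\bar v}$ is generally not convex as a function on the vector space $\pg$.

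The paper's route is substantially different and sidesteps these issues. It first treats a maximal torus $\T=\exp(\tg)$: for each admissible index set of weights it builds explicit continuous $\T$-invariant functions $v\mapsto\prod_i v_i^{w_i}$, and shows these separate any two disjoint closed $\T$-invariant subsets. The general Hilbert--Mumford criterion is then proved by contradiction: one first picks a specific $\bar v\in\overline{\G\cdot v}\setminus\G\cdot v$ (of minimal norm, so that $\G\cdot\bar v$ is closed) and shows that if $\overline{\T\cdot(k\cdot v)}\cap\G\cdot\bar v=\emptyset$ for every $k\in\K$, then separating functions plus compactness of $\K$ force $\bar v\notin\overline{\K\T\K\cdot v}=\overline{\G\cdot v}$, a contradiction. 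Uniqueness in (iii) and closedness of $\nca$ in (iv) are likewise deduced from the separation machinery together with the abelian case, not from global convexity of $f_v$. If you want to rescue your approach, the missing idea is how to choose the \emph{correct} direction: in the abelian case it is the minimal-norm element of (a projection of) the weight polytope, targeted at a prescribed boundary orbit, rather than the limit of $\alpha_n/\|\alpha_n\|$ for an arbitrary minimising sequence.
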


Part (iii) of the above theorem was first proved in \cite{Lu75}, and
parts (i) and (ii) in \cite{RS90}, under the assumption that the action 
of $\G$ on $V$ is \emph{rational} in the sense of algebraic geometry. One of its main implication is the fact that the set of closed orbits provides a \emph{good quotient} for the $\G$-action, with much better properties than the potentially non-Hausdorff orbit space.
In the complex case, part (i) of the above theorem is known as the Kempf-Ness Theorem \cite{KN79}, 
part (ii) is related to the Hilbert-Mumford criterion for stability \cite{GIT1994}, and part (iii) appears in \cite{Lu73}.
Let us mention that the open set of \emph{semistable} vectors $V\backslash \nca$ is either empty or dense, by \mbox{\cite[Appendix A]{HS10}}. However, at the moment our methods do not allow us to prove this fact in an elementary way.


Using  the decomposition $\G = \K \T \K$, see Appendix \ref{app_reductive},
the proof of Theorem \ref{thm_realGIT} can be reduced to the abelian case. Here $\T$ is a maximal subgroup of $\G$ contained in $\exp(\pg)$, necessarily abelian and non-compact.
The abelian case relies 
on two crucial facts:  the convexity of the distance function to the origin along one-parameter subgroups and the separation of any two {}
closed $\T$-invariant sets by continuous $\T$-invariant functions.

We turn now to the Stratification Theorem. Endow $\ggo$ with an $\Ad(\K)$-invariant scalar product, also denoted by $\ip$, such that $\langle \kg,\pg\rangle=0$ 
and
\begin{equation}\label{eqn_assumipggo}
    \ad(\kg) \subset \sog(\ggo,\ip), \qquad \ad(\pg) \subset \sym(\ggo,\ip).
\end{equation}
For instance,  one possible choice for $\ip$ is the restriction of the usual scalar product on $\glg(V)$ induced by that on $V$.

\begin{definition}[Moment map]\label{def_mm}
The map  $\mmm : V \backslash \{0 \} \to \pg$ defined implicitly by
\begin{equation}\label{eqn_defmm}{}
    \la \mmm(v), \Alpha \ra 
    = \tfrac1{\Vert v\Vert^2} \cdot \la \Alpha \cdot v, v\ra \,,
\end{equation} 
for all $\Alpha\in \pg$, $v \in V \backslash \{ 0\}$,
is called the \emph{moment map} associated to the action of $\G$ on $V$.
The corresponding \emph{energy map} is given by
\[
    \normmm : V\backslash \{ 0\} \to \RR\,\,;\,\,\, v \mapsto \Vert {\mmm(v)} \Vert^2\,.
\] 
\end{definition}

This scale-invariant moment map describes the infinitesimal change of the norm in $V$ under the group action: if $(\exp(tA))_{t \in \RR}$ is the one-parameter subgroup of $\G$ associated to $A \in \pg$, then the
corresponding smooth action field on $V$ is given by $X_A(v): =A\cdot v$. 
It follows that minimal vectors are zeroes of $\mmm$. In fact, it can be proved that $\mca = \mmm^{-1}(0)$, see Lemma \ref{lem_convex}. The $\K$-invariance of the involved scalar products implies that the moment map $\mmm$ is $\K$-equivariant, if we consider on $\pg$ the adjoint action of  $\K$. That is, we have $\mmm(k \cdot v) = k \, \mmm(v) \, k^{-1}$,
for all $k\in \K$. 
The name \emph{moment map} comes from symplectic geometry, see  Section \ref{sec_complex}.

The following real version of
the Kirwan-Ness Stratification Theorem \cite{Krw1}, \cite{Ness} 
 is due to \cite{HSS} and \cite{standard}

\begin{theorem}\label{thm_stratif}
There exists a finite subset $\bca \subset \pg$ 
and a collection of smooth, $\G$-invariant submanifolds 
$\{ \sca_\Beta \}_{\Beta \in \bca}$ of $V$, with the following properties:
\begin{itemize}
    \item[(i)]  
     We have $V\backslash \{ 0\} = \bigcup_{\Beta\in \bca} \sca_\Beta$
     and $\sca_\Beta \cap \sca_{\Beta'}=\emptyset$ for $\Beta \neq \Beta'$.
    \item[(ii)] We have 
     $\overline{\sca_\Beta} \, \backslash \, \sca_\Beta \subset \bigcup_{\Beta'\in \bca, \Vert \Beta'\Vert > \Vert \Beta \Vert} \sca_{\Beta'}$ (the closure taken in $V\backslash \{ 0\}$).
    \item[(iii)] A vector $v$ is contained in
       $\sca_\Beta$ if and only if the negative gradient 
             flow of $\normmm$ starting at $v$ 
             converges to a critical point $v_C$ of $\normmm$
             with $\mmm(v_C) \in \K \cdot \Beta$. 
\end{itemize} 
\end{theorem}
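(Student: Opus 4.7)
The plan is to mimic the Kirwan--Ness strategy in the real setting, organizing the strata by the moment map values at the critical points of the energy $\normmm$, and replacing every appeal to algebraic GIT by the real Kempf--Ness theorem (Theorem \ref{thm_realGIT}) applied to real reductive Levi subgroups that arise from $\ad$-eigenspace decompositions.

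First, computing the gradient of $\normmm$ with respect to the natural $\K$-invariant metric on $V\backslash\{0\}$, one shows that $v$ is critical iff $\mmm(v)\cdot v = \normmm(v)\cdot v$, that is, $v$ is an eigenvector of $\mmm(v)\in \pg$ with eigenvalue $\|\mmm(v)\|^2$. Using the $\K$-equivariance of $\mmm$, we may replace $\mmm(v)$ by a $\K$-conjugate lying in a fixed closed Weyl chamber $\ag_+$ of a maximal abelian $\ag\subset\pg$, and define
\[
\bca := \{\Beta \in \ag_+ : \Beta = \mmm(v_C) \text{ for some nonzero critical point } v_C\}.
\]
For each $\Beta\in\bca$, decompose $V=\bigoplus_r \Vr$ into $\Beta$-eigenspaces and consider the associated parabolic $\Qb=\Hb\cdot\Ub$, where $\Hb$ is the centralizer of $\Beta$ in $\G$ (itself real reductive in the sense of \eqref{eqn_Cartandec}) and $\Ub$ its unipotent radical. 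The key observation is that on the top eigenspace $\Vr$ with $r=\|\Beta\|^2$, the shifted map $v\mapsto \mmm(v)-\Beta$, projected onto $\Hb\cap\pg$, equals up to a positive scalar the moment map of the linear $\Hb$-action; hence critical points of $\normmm$ with value $\Beta$ are precisely the $\Hb$-minimal vectors in $\Vr$, and by Theorem \ref{thm_realGIT} they form a $\Kb$-invariant set parametrizing the closed $\Hb$-orbits in this eigenspace.

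Next, I define $\sca_\Beta$ as the set of $v\in V\backslash\{0\}$ whose negative gradient flow under $\normmm$ converges to a critical point with moment map in $\K\cdot\Beta$, which immediately yields (iii) and the covering and disjointness in (i). The submanifold structure is obtained by identifying $\sca_\Beta = \G\cdot \sca_\Beta^{ss}$, where $\sca_\Beta^{ss}\subset \Vnn$ denotes the $\Qb$-invariant set of vectors whose projection to $\Vr$ (with $r=\|\Beta\|^2$) is $\Hb$-semistable. The closure relation in (ii) follows because the flow $t\mapsto \exp(-t\Beta)$ deformation-retracts $\Vnn$ onto $\Vr$ while monotonically decreasing $\normmm$, so boundary points of $\sca_\Beta$ can only lie in strata with strictly larger critical value. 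Finiteness of $\bca$ is then obtained by showing that each $\Beta\in\bca$ is the closest point to the origin in the convex hull of some subset of the weights of $V$ under a maximal $\RR$-split torus, of which there are only finitely many.

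The principal obstacle is proving convergence of the negative gradient flow to a single critical point from every initial datum, together with the smoothness of $\sca_\Beta$ as a submanifold, both without invoking algebraic real GIT. I would tackle convergence by combining the convexity of $t\mapsto \log\|\exp(t\alpha)\cdot v\|^2$ (already central in the proof of Theorem \ref{thm_realGIT}) with a \L{}ojasiewicz-type inequality for the real-analytic function $\normmm$ near critical sets, and reduce the submanifold property of $\sca_\Beta$ to the transversal structure of the $\Hb$-semistable set inside $\Vr$ via the $\K\T\K$-decomposition of $\G$ recalled in Appendix \ref{app_reductive}.
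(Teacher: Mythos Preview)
Your overall architecture matches the paper's: parametrize critical points by $\Beta=\mmm(v_C)$ in a Weyl chamber, attach to each $\Beta$ the parabolic $\Qb$ and the non-negative eigenspace $\Vnn$ of $\Beta^+:=\Beta-\|\Beta\|^2\Id_V$, identify critical points with minimal vectors for a Levi subgroup acting on $\Vzero$, define the stratum via the semistable locus, and appeal to {\L}ojasiewicz for flow convergence. Finiteness via ``closest point in a convex hull of weights'' is exactly Lemma~\ref{lem_finiteness}.

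There are, however, two substantive gaps. First, your Levi subgroup is misidentified: the centralizer $\Gb$ of $\Beta$ contains $\exp(\RR\Beta)$, which acts by nontrivial scaling on the eigenspace $\Vzero$, so $\Vzero$ has \emph{no} nonzero $\Gb$-minimal vectors. The correct group is $\Hb=\Kb\exp(\pg_\Beta\cap\Beta^\perp)$, the codimension-one real reductive subgroup with $\Gb=\exp(\RR\Beta)\times\Hb$; only for this $\Hb$ does $\mmm_{\Hb}(v)=\mmm(v)-\Beta$ hold on $\Vzero$ and the zero set of $\mmm_{\Hb}$ correspond to critical points of $\normmm$ (Lemmas~\ref{lem_mmZbeta} and~\ref{lem_equivWbeta}). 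Second, and more seriously, you have reversed the order of definition: taking $\sca_\Beta$ to be the dynamical unstable set makes (iii) and the covering tautological, but then the entire content of the theorem is the identification $\sca_\Beta=\G\cdot\Vnnss$ and its smoothness --- and your sketch does not say how this is proved. The paper defines $\sca_\Beta:=\G\cdot\Vnnss$ algebraically, proves it is an embedded submanifold via the associated bundle $\G\times_{\Qb}\Vnnss$ and the decomposition $\G=\K\Qb$ (Proposition~\ref{prop_scasmooth}, not $\K\T\K$), and then supplies the missing analytic bridge: a Hessian computation (Lemma~\ref{lem_Hess}) showing that $\Hess_{v_C}\normmm$ is strictly negative on the normal space $(\kg\cdot v_C+\Vnn)^\perp$, together with a second-order ODE argument forcing any flow line converging to $v_C$ to eventually lie in $\sca_\Beta$; $\G$- and scale-invariance then pull the whole trajectory into $\sca_\Beta$. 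Your proposal never mentions the Hessian, and without it there is no mechanism preventing flow lines from approaching $\cca_\Beta$ from outside the algebraic stratum.

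A smaller point: your argument for (ii) is incorrect as stated. The flow $t\mapsto\exp(-t\Beta^+)$ retracts $\Vnn$ onto $\Vzero$, but $\normmm$ is not monotone along it. The paper instead uses the uniform lower bound $\|\mmm(v)\|\geq\|\Beta\|$ on $\overline{\sca_\Beta}\backslash\{0\}$ (Corollary~\ref{cor_estimatemmm}), together with rigidity in the equality case, to force any boundary point into a strictly higher stratum.
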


The submanifolds $\sca_\Beta$ are called \emph{strata}. 
The set of semistable vectors $V \backslash \nca$ is  nothing but the stratum 
$\sca_0$ (Corollary \ref{cor_semistablesca0}).
The $\G$-invariance (and scale-invariance) of the strata is justified by the formula for the gradient of $\normmm$ given in Lemma \ref{lem_gradient}.

It is worthwhile to mention that for some 
applications in non-K\"ahlerian Riemannian geometry
it is interesting to consider representations where $\nca = V$. The reason for this is
that from the proof of the Stratification Theorem one can deduce estimates for the 
associated moment map, which are trivial on $\sca_0$ 
but highly non-trivial on $\nca$: see \cite{standard}, \cite{BL17}
 and Lemma \ref{lem_mainestimate}.


Concerning the proof of Theorem \ref{thm_stratif} we recall  that
the energy map $\normmm$ is in general not a Morse-Bott function. Nevertheless, it has the following remarkable property: 
the image of its critical points under the moment map consists of finitely many $\K$-orbits 
$\K\cdot \Beta_1,\ldots,K\cdot \Beta_N$. As a consequence,
 $\bca=\{\Beta_1,\ldots,\Beta_N\}$ is a finite set.
For $\Beta \in \bca$ one denotes by $\cca_\Beta$ the set of critical points of
$\normmm$ with $\mmm(\cca_\Beta) \subset \K\cdot \Beta$ and by
$\sca_\Beta \subset V$ the unstable manifold of $\cca_\Beta$
with respect to the negative (analytic) gradient flow of $\normmm$. One then needs to prove that $\sca_\Beta$ is $\G$-invariant. The proof of the Stratification theorem
does in fact go the other way around: one first defines certain sets as \emph{candidates} for being strata, and then proves that they are  invariant under the negative gradient flow of $\normmm$. See Section \ref{sec_strat} for further details.


The article is organized as follows. In Section \ref{sec_examples} we discuss three explicit examples that illustrate the basic concepts. In Section \ref{sec_complex} we explain how our setting is related to the notion of moment map in symplectic geometry. 
 In Section \ref{sec_torusactions},
we assume that $\G=\T$ is abelian, and prove that any
two disjoint closed  $\T$-orbits can be separated by a single $\T$-invariant continuous function. More generally, in Section \ref{sec:sepclosedT} we show that
two disjoint closed  $\T$-invariant sets can be separated by a $\T$-invariant continuous function.
In Section \ref{sec_generalactions} we generalize this
 to real reductive Lie groups and complete the proof of Theorem \ref{thm_realGIT}.
The Stratification Theorem \ref{thm_stratif} is proved in Sections \ref{sec_strat} and \ref{sec_critical}. In Section \ref{sec_applications} we mention some immediate applications of the stratification theorem.
Finally, the two appendices contain some well-known Lie-theoretic properties of real reductive Lie groups and its subgroups, which we prove based solely on our assumption \eqref{eqn_Cartandec}. 

\vs \noindent {\it Acknowledgements.} It is our pleasure to thank Ricardo Mendes and Marco Radeschi for fruitful discussions, and Michael Jablonski, Martin Kerin and Jorge Lauret for their helpful comments.

\section{Examples}\label{sec_examples}

In order to illustrate the content of Theorems \ref{thm_realGIT} and \ref{thm_stratif} we describe in this section three concrete examples.

First of all we consider one of the simplest examples of an action satisfying \eqref{eqn_Cartandec}, namely $\G=\RR_{>0}$ acting on $V=\RR^2$ via
 $  \lambda \cdot (x,y) := (\lambda x, \lambda^{-1} y)$,
 $ \lambda \in \RR_{>0}$ and $x,y\in \RR$.
The null cone is the union of the origin and $4$ non-closed orbits, two for each axis. All the orbits corresponding to semistable vectors $(x,y)\in \RR^2$, $x,y\neq 0$, are closed. Finally, the set of minimal vectors is $\mca = \{ (x,y)\in \RR^2 : \vert x \vert = \vert y \vert \}$, and the moment map is $\mmm(x,y) = (x^2 - y^2)/(x^2+y^2)$. Notice that by changing the action, say by replacing $\lambda x$ by $\lambda^\pi x$ on the right-hand side, one obtains a representation whose complexification is not rational in the sense of algebraic geometry.

Next, consider the less trivial example of $\G=\Sl_n(\RR)$ acting by conjugation on the space $V={\rm Mat}(n,\RR)$ of $(n\times n)$-matrices with real entries:
\[
    h \cdot A := h \, A \, h^{-1}\,,
\] 
where  $h\in \Sl_n(\RR)$ and $A\in {\rm Mat}(n,\RR)$.
We endow $\slg_n(\RR)$ and ${\rm Mat}(n,\RR)$ with the usual scalar product induced from that of $\RR^n$, $\la A, B\ra = \tr A B^t$. In this case, for any $A\in {\rm Mat}(n,\RR)$ the orbit $\Sl_n(\RR) \cdot A$ is closed if and only if $A$ is \emph{semisimple} (i.e.~ diagonalizable over $\CC$). More generally, let $A = S+N$ denote the Jordan decomposition,
that is $S,N \in {\rm Mat}(n,\RR)$,  $S$ semisimple, $N$ nilpotent and 
\mbox{$[S,N] = 0$}.
 Then $A$ is semistable if and only if $S\neq 0$. Thus the null cone consists of the set of
 nilpotent matrices. If $\pg_0 \subset \slg_n(\RR)$ denotes the subset of traceless symmetric matrices, the moment map is given by
\[
    \mmm : {\rm Mat}(n,\RR) \backslash \{ 0\} \to \pg_0, \qquad \mmm(A) = \tfrac1{\Vert A\Vert^2}\cdot [A, A^t].
\] 
The minimal vectors are  the normal matrices. Moreover, the strata $\sca_{\Beta}$, $\Beta \neq 0$,
 are in one-to-one correspondence with Jordan canonical forms for a nilpotent matrix 
 $N$. They are  parameterized by ordered partitions $n = n_1 + \cdots + n_r$, $n_1 \leq \cdots \leq n_r$. For explicit computations of the corresponding stratum labels and critical points of $\normmm$ for this example we refer the reader to \cite[$\S$4]{finding}.

Our last example, which was in fact our main motivation for writing this article, is as follows: let $V_n := \Lambda^2(\RR^n)^* \otimes \RR^n$ denote the vector space of skew-symmetric, bilinear maps $\mu : \RR^n \times \RR^n \to \RR^n$, and consider the \emph{change of basis} action of $\G=\Gl_n(\RR)$ on $V_n$, given by
\[
    h\cdot \mu (\cdot, \cdot) := h \mu(h^{-1} \cdot , h^{-1} \cdot)\,,
\] 
where $ h\in \Gl_n(\RR)$ and  $\mu\in V_n$.
Let $\{e_i\}_{i=1}^n$ denote the canonical basis of $\RR^n$ and $\{e_i^*\}_{i=1}^n$ its dual. We endow $\glg_n(\RR) \simeq (\RR^n)^* \otimes \RR^n$ and $V_n$ with the scalar products making the respective bases $\{ e_i^* \otimes e_j \}_{i,j}$ and $\{ e_i^* \wedge e_j^* \otimes e_k \}_{i<j; k}$ orthonormal. Notice that this scalar product
is \emph{not} the one induced from $\glg(V_n)$ via the above action.  

Observe now that $V_n$ contains as an algebraic subset the so called \emph{variety of Lie algebras}
\[
    \lca_n := \{ \mu \in V_n : \mu \hbox{ satisfies the Jacobi identity}\}.
\]
The set $\lca_n$ is $\Gl_n(\RR)$-invariant, and an orbit $\Gl_n(\RR) \cdot \mu\subset \lca_n$ consists precisely of those Lie brackets $\tilde \mu$ which are isomorphic to $\mu$. The null cone in this case is everything, since $\Id \in \glg_n(\RR)$ acts as $-\Id_{V_n}$, thus any orbit contains $0$ in its closure. However, by restricting to the action of $\Sl_n(\RR)$ there exist closed orbits, and those in $\lca_n$ correspond precisely to the semisimple Lie algebras. 

An additional feature is the fact that the variety $\lca_n$ can be thought of as a parameterization of the space of left-invariant Riemannian metrics on $n$-dimensional Lie groups \cite{Lau2003}. The moment map $\mmm : V_n\backslash\{0\} \to \pg,$ where $\pg \subset \glg_n(\RR)$ denotes the subset of symmetric matrices, can be computed explicitly, and it appears naturally in the formula for the Ricci curvature of the corresponding Riemannian metric. Furthermore, within the subset of nilpotent Lie algebras, the critical points for $\normmm$ correspond to \emph{Ricci soliton} nilmanifolds \cite{soliton}.


\section{Comparison with complex and symplectic case}\label{sec_complex}

In this section we connect our setting with the complex setting, and explain how this relates with the notion of moment map from symplectic geometry. 

We first recall that, despite the fact that the definition of real reductive groups used by Richardson and Slodowy relies on that of a complex reductive Lie group, it follows from \cite[2.2]{RS90} that they all satisfy our assumption \eqref{eqn_Cartandec}.



 Let now $\G\subset \Gl(V)$ be a closed subgroup satisfying \eqref{eqn_Cartandec}, and let $V^\CC = V \otimes_\RR \CC$ be the complexified vector space. For simplicity let us assume for the rest of this section that $\G$ is connected. After considering the natural inclusion $\Gl(V) \subset \Gl(V^\CC)$, the complexification $\ggo^\CC$ of the Lie algebra $\ggo$  of $\G$ can be viewed as a subalgebra of $\glg(V^\CC)$. Let $\G^\CC$ be the connected Lie subgroup of $\Gl(V^\CC)$ with Lie algebra $\ggo^\CC$. We call $\G^\CC$ the \emph{complexification} of the Lie group $\G$: it is a complex Lie group containing $\G$ as a closed subgroup, and whose Lie algebra is the complexification of that of $\G$. We will assume for simplicity that $\G^\CC$ is a closed subgroup of $\Gl(V^\CC)$, although this is not necessarily always the case.

The inner product $\ip$ on $V$ induces as usual a Hermitian inner product $\iph$ 
 on $V^\CC$, which allows us to identify $V^\CC \simeq \CC^{N+1}$ as complex vector spaces, so that $\iph$ becomes the canonical Hermitian inner product on $\CC^{N+1}$. Since $\G^\CC$ acts linearly on $\CC^{N+1}$, it also acts on the corresponding projective space $\CC P^N$. Moreover, 
 the condition \eqref{eqn_Cartandec} implies that the maximal compact subgroup $\U$ of $\G^\CC$ (whose Lie algebra is given by $\ug = \kg \oplus i \pg$) acts by unitary transformations on $(V^\CC, \iph)$, and in particular its corresponding action on $\CC P^N$ preserves the Fubini-Study metric and its associated $2$-form $\omega_{FS}$. For example, when $\G = \Gl(V)$ we have $\U = \U(N+1)$, $N+1 = \dim_\CC V^\CC$. 
 It is well-known (see e.g.~ \cite[Lemma 2.5]{Krw1}) that for this symplectic action of the compact Lie group $\U$ on $\CC P^N$ there exists a moment map in the sense of symplectic geometry,
\[
	\mmm_\U : \CC P ^N \to \ug^*, \qquad \mmm_\U(x) (A) = 
	\tfrac{v^t A v}{2 \pi \Vert v \Vert^2},
\]
where $v\in \CC^{N+1}$ is any vector over $x \in \CC P^N$ and $A\in \ug \subset \ug(N+1)$.
The inclusion $i \pg \subset \ug$ induces a restriction map $\operatorname{r}_{\pg} : \ug^* \to (i \pg)^* \simeq \pg$, the last identification being made using the scalar product on $\pg \subset \ggo$. Up to a constant scalar multiple, the moment map for the action of $\G$ on $V$ (Definition \ref{def_mm}) satisfies 
\[
	\mmm = \operatorname{r}_{\pg} \circ \mmm_\U \, \circ \, \pi \, \big|_{V \backslash \{ 0\} },
\]
 where $\pi : V^\CC \backslash \{ 0\} \simeq \CC^{N+1} \backslash \{0 \} \to \CC P^N$ is the usual projection.

\section{The abelian case}\label{sec_torusactions}

In this section we assume that $\K = \{ e\} $ and that $\G = \T = \exp(\tg)$ is an abelian group of positive definite matrices, $\tg \subset \Sym(V,\ip)$. 
Since commuting symmetric matrices can be diagonalized simultaneously, 
there exists an orthonormal basis $\{e_1, \ldots, e_N\}$  for $V$ which diagonalizes the action of $\T$. Let \mbox{$\alpha_1, \ldots, \alpha_N \in \tg$} 
be the corresponding ``weights'', that is, the action of $\T$ on $V$ is given by 
\begin{equation}\label{eqn_Taction}
	\exp(\lambda) \cdot v = \big(e^{\la \lambda, \alpha_1\ra} v_1, \ldots, e^{\la \lambda, \alpha_N\ra} v_N\big)\,,
\end{equation}
where $\lambda \in \tg$ and  $v = (v_1,  \ldots,  v_N) \in V$, the coordinates being with respect to the chosen basis. The scalar product $\ip$ on $\tg$ is simply given by restricting the one on $\ggo$ (see the paragraph before Definition \ref{def_mm}).

 For any subset $I \subset \II_N := \{1, \ldots, N\}$ we set $\tg_I := \spa_\RR \{ \alpha_i : i\in I\} \subset \tg$
with the convention that $\tg_\emptyset=\{0\}$. Moreover we 
define the vector subspace
$V_I := \left\{ v \in V : v_i =0 \, \, \mbox{for all } i\notin I \right\}$ of $V$
and the open subsets
\begin{align*}
	U_I := \left\{ v  \in V_I : v_i  \neq 0 \, \, \mbox{for all } i\in I \right\} \,\,,
	\quad 
    U_I^+ := \left\{ v  \in U_I : v_i  > 0 \, \, \mbox{for all } i\in I \right\} 
\end{align*}
of $V_I$.
Clearly,  $U_I$ is a dense subset of $V_I$, disconnected if $I \neq \emptyset$, and $U_I^+$ is one of its connected components. 
Notice that $V = \cup_{I\subset \II_N} U_I$ as a  disjoint union.

\begin{lemma}[Hilbert-Mumford criterion for abelian groups]\label{lem_oneparamT}
Let $v \in V$ and suppose that $\T\cdot v$ is a non-closed orbit.
Then,
 for any $\bar v \in \overline{\T\cdot v} \, \backslash \T\cdot v$ there exists 
$\alpha \in \tg$ and $g \in \T$ such that $\lim_{t\to \infty} \exp(t \alpha) \cdot v = g \cdot \bar v$. 
\end{lemma}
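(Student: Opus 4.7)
The plan is to exploit the explicit diagonalization \eqref{eqn_Taction} of the $\T$-action and run a compactness-and-rescaling argument, proceeding by induction on $|I|$, where $I:=\{i : v_i\neq 0\}$. After replacing basis vectors $e_i$ by $-e_i$ where needed (which does not alter the action, since $\exp(\lambda)$ acts with positive diagonal entries), we may assume $v\in U_I^+$; then sign-preservation of the action forces $\bar v\in\overline{U_J^+}$ with $J:=\{i : \bar v_i\neq 0\}\subseteq I$. Since any component of $\lambda\in\tg$ orthogonal to $\tg_I:=\spa\{\alpha_i : i\in I\}$ acts trivially on $V_I$, we may also reduce to $\tg=\tg_I$.

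Choose $\lambda_k\in\tg$ with $\exp(\lambda_k)\cdot v\to\bar v$. The sequence $\{\lambda_k\}$ must be unbounded, since a bounded subsequence limit $\lambda_*$ would give $\bar v=\exp(\lambda_*)\cdot v\in\T\cdot v$, contrary to assumption. After passing to a subsequence, $\mu_k:=\lambda_k/\Vert\lambda_k\Vert\to\mu_*\in\tg$ with $\Vert\mu_*\Vert=1$. The componentwise convergence $e^{\la \lambda_k,\alpha_i\ra}v_i\to\bar v_i$ yields $\la \lambda_k,\alpha_i\ra \to \log(\bar v_i/v_i)=:c_i$ for $i\in J$ and $\la \lambda_k,\alpha_i\ra\to -\infty$ for $i\in I\setminus J$; dividing by $\Vert\lambda_k\Vert$ gives $\la \mu_*,\alpha_i\ra=0$ for $i\in J$ and $\la \mu_*,\alpha_i\ra\le 0$ for $i\in I\setminus J$. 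Writing $\mu_*=\sum_{i\in I}c'_i\alpha_i$, the identity $\Vert\mu_*\Vert^2=\sum_{i\in I}c'_i\la \mu_*,\alpha_i\ra$ forces at least one strict inequality, so $J_1:=J\cup\{i\in I\setminus J : \la \mu_*,\alpha_i\ra=0\}$ satisfies $J\subseteq J_1\subsetneq I$.

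Suppose first $J_1=J$. Then $\lim_{t\to\infty}\exp(t\mu_*)\cdot v=v|_J$. Since $(c_i)_{i\in J}$ is the limit of elements of the closed linear subspace $\{(\la \eta,\alpha_i\ra)_{i\in J}:\eta\in\tg\}\subset\RR^J$, there exists $\eta\in\tg$ with $\la \eta,\alpha_i\ra=-c_i$ for $i\in J$; setting $\alpha:=\mu_*$ and $g:=\exp(\eta)$ gives $g\cdot \bar v=v|_J$, as required. Suppose instead $J_1\supsetneq J$ and put $v_1:=v|_{J_1}$. A direct check using the same sequence $\lambda_k$ shows $\exp(\lambda_k)\cdot v_1\to\bar v$, so $\bar v\in\overline{\T\cdot v_1}$; on the other hand $\bar v\notin\T\cdot v_1$, because $(v_1)_i=v_i\neq 0$ while $\bar v_i=0$ for $i\in J_1\setminus J$. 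Applying the inductive hypothesis to $v_1$ (with $|I(v_1)|=|J_1|<|I|$) yields $\alpha_0\in\tg$ and $g\in\T$ with $\lim_{t\to\infty}\exp(t\alpha_0)\cdot v_1=g\cdot\bar v$. For $N$ sufficiently large, $\alpha:=\alpha_0+N\mu_*$ satisfies $\la \alpha,\alpha_i\ra=\la \alpha_0,\alpha_i\ra$ for $i\in J_1$ and $\la \alpha,\alpha_i\ra<0$ for $i\in I\setminus J_1$; a coordinatewise computation then shows $\lim_{t\to\infty}\exp(t\alpha)\cdot v=g\cdot \bar v$, completing the induction.

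The main obstacle is that the naive candidate $\alpha=\mu_*$ typically fails, because some pairings $\la \mu_*,\alpha_i\ra$ with $i\in I\setminus J$ may vanish and thus the limit $\lim_{t\to\infty}\exp(t\mu_*)\cdot v$ need not land near $\T\cdot\bar v$. The induction on $|I|$, together with the perturbation $\alpha_0+N\mu_*$ in the final step, is what overcomes this degeneracy.
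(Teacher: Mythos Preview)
Your proof is correct, but it follows a genuinely different route from the paper's. The paper avoids induction entirely by a single convex-geometry step: it decomposes $\tg_I = \tg_J \oplus \tg_J^\perp$, projects the weights $\alpha_i$ for $i\in I\setminus J$ onto $\tg_J^\perp$, shows (using the same bounded/unbounded dichotomy of $\la \lambda^{(k)},\alpha_i\ra$ that you exploit) that $0$ is not in the convex hull $\cca$ of these projections, and then takes $\Beta$ to be the minimal-norm element of $\cca$. Convexity immediately gives $\la \Beta,\alpha_j\ra = 0$ for $j\in J$ and $\la \Beta,\alpha_i\ra \ge \Vert\Beta\Vert^2 > 0$ for all $i\in I\setminus J$, so $\alpha = -\Beta$ works in one shot.

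Your approach replaces this convex-hull argument by a compactness-and-rescaling step to produce the candidate direction $\mu_*$, and then handles the possible degeneracy ($\la\mu_*,\alpha_i\ra = 0$ for some $i\in I\setminus J$) by induction on $|I|$ and the perturbation $\alpha_0 + N\mu_*$. This is more elementary in that it uses only linear algebra and sequential compactness, no convex geometry, and the observation that $\{(\la\eta,\alpha_i\ra)_{i\in J}:\eta\in\tg\}$ is closed plays the role that the explicit $\lambda^\infty\in\tg_J$ plays in the paper. The paper's argument is shorter and gives a canonical $\alpha$; yours is arguably more robust in that the same rescaling-plus-induction template works for other limit problems where no convex structure is available.
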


\begin{proof}
Let $I, J\subset \II_N$ be such that $v\in U_I$, $\bar v\in U_J$. The assumptions imply that $J \subset I$. But if $J =I$ then  $\bar v \in \T \cdot v$. Thus $J \subsetneq  I$, from which $J^C:=I\backslash J \neq \emptyset$.

Let $\left(\lambda^{(k)}\right) \subset \tg$ be a sequence with $\lim_{k \to \infty}\exp(\lambda^{(k)}) \cdot v =\bar v$. From \eqref{eqn_Taction} we deduce that
for all $j \in J$ and all $i \in J^C$ it holds that
\begin{eqnarray}\label{eqn_seqlambdabdd1}
	 \lim_{k \to \infty}\la \lambda^{(k)}, \alpha_j\ra 
	  = 
	  \lambda^\infty_j \in \RR  \quad\textrm{ and }\quad
	\lim_{k \to \infty}\la \lambda^{(k)}, \alpha_i\ra 
	&=& 
	-\infty\,.
\end{eqnarray}
In particular, the projection of $\lambda^{(k)}$ onto $\tg_J$ converges to some $\lambda^\infty \in \tg_J$ as $k\to \infty$.
We decompose $\tg_I = \tg_{J} \oplus \tg_{J}^\perp$  orthogonally 
 and for each $i\in J^C$ denote  by 
$\alpha_i^\perp\neq 0$ the orthogonal projection of $\alpha_i$ onto $\tg_J^\perp$. We claim, that
$0$ is not contained in the convex hull $\cca = CH\{ \alpha_i^\perp : i\in J^C\}$.
 Indeed, if $0 \in \cca$, then for some $c_i >0$ we would have that $\gamma := \sum_{i\in J^C} c_i \, \alpha_i \in \tg_J$, and by \eqref{eqn_seqlambdabdd1}
 the sequence $(\la \lambda^{(k)}, \gamma\ra)$ would be   bounded
 and unbounded simultaneously, a contradiction.

 Thus, let $\beta \in \cca$ be the element of minimal (positive) norm in $\cca$. We have that $\beta \perp \alpha_j$ for all $j\in J$, and convexity implies that $-\la \beta, \alpha_i\ra = -\la \beta, \alpha_i^\perp\ra \leq -\Vert \beta \Vert^2 < 0$ for all $i\in J^C$. 
 We obtain  $g \cdot \bar v = \lim_{t\to \infty} \exp(- t \beta) \cdot v$, where
  $g = \exp(-\lambda^\infty)$.
\end{proof}

\begin{corollary}\label{cor_oneclosedorbit}
Any $\T$-orbit has a closed $\T$-orbit in its closure.
\end{corollary}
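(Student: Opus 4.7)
The plan is to iterate Lemma \ref{lem_oneparamT} (the abelian Hilbert--Mumford criterion). Given $v \in V$, it lies in a unique $U_I$ for some $I \subset \II_N$. If $\T \cdot v$ is already closed, there is nothing to prove. Otherwise the lemma yields $\bar v \in \overline{\T \cdot v} \setminus \T \cdot v$ and $g \in \T$ such that $v_1 := g \cdot \bar v = \lim_{t\to\infty} \exp(t\alpha) \cdot v$ for some $\alpha \in \tg$. From the lemma's proof $\bar v \in U_J$ with $J \subsetneq I$, and since $U_J$ is $\T$-invariant we also have $v_1 \in U_J$.

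The key preliminary observation that I would make explicit is the nesting property of orbit closures: if $w \in \overline{\T \cdot v}$, then continuity of the $\T$-action gives $g \cdot w \in \overline{\T \cdot v}$ for every $g \in \T$, so $\T \cdot w \subset \overline{\T \cdot v}$ and hence $\overline{\T \cdot w} \subset \overline{\T \cdot v}$. Applied to $w = v_1$ this yields $\overline{\T \cdot v_1} \subset \overline{\T \cdot v}$. If $\T \cdot v_1$ is closed we are done; otherwise I apply Lemma \ref{lem_oneparamT} again to $v_1$, producing $v_2 \in \overline{\T \cdot v_1} \subset \overline{\T \cdot v}$ with $v_2 \in U_{J'}$ for some $J' \subsetneq J$. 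Continuing, I obtain a sequence $v = v_0, v_1, v_2, \ldots$ in $\overline{\T \cdot v}$ with strictly decreasing associated index sets $I \supsetneq J \supsetneq J' \supsetneq \cdots$ in the finite set $\II_N$.

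The procedure must therefore terminate after at most $|I|$ steps, necessarily at some $v_k$ whose $\T$-orbit is closed (and contained in $\overline{\T \cdot v}$ by the nesting). In the extreme case the chain reaches the empty index set, in which case $v_k = 0$ and $\T \cdot v_k = \{0\}$ is trivially closed. There is no genuine obstacle here once Lemma \ref{lem_oneparamT} is available; the only point requiring attention is the nesting of orbit closures, which guarantees that the closed orbit we eventually extract lies inside $\overline{\T \cdot v}$ itself rather than merely inside $\overline{\T \cdot v_{k-1}}$.
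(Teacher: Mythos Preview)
Your argument is correct and follows the same descent strategy as the paper: iterate Lemma~\ref{lem_oneparamT} until a closed orbit is reached. The only difference is the decreasing invariant used to guarantee termination: you track the strictly shrinking index set $I \supsetneq J \supsetneq \cdots$, whereas the paper observes that the direction $\alpha$ from Lemma~\ref{lem_oneparamT} becomes a new element of the isotropy subalgebra of the limit (since $\exp(s\alpha)\cdot g\bar v = \lim_{t\to\infty}\exp((t+s)\alpha)\cdot v = g\bar v$), so the orbit dimension strictly drops, and one simply picks an orbit of minimal dimension in $\overline{\T\cdot v}$. Both invariants work equally well here; your version has the advantage of being entirely combinatorial and making the nesting $\overline{\T\cdot v_k}\subset\overline{\T\cdot v}$ explicit, while the paper's is slightly more conceptual and avoids the explicit iteration.
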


\begin{proof}
By Lemma \ref{lem_oneparamT}, an orbit in the closure has strictly smaller dimension because the direction $\alpha$ defining the one-parameter subgroup is a new element in the isotropy subalgebra, easily seen by applying $\exp(s\alpha)$ to
$g \cdot \bar v=\lim_{t \to \infty}\exp(t\alpha)\cdot v$.
The claim follows now by picking an orbit of minimal dimension.
\end{proof}

For any  subset $I \subset \II_N$ we denote now by
\[
    \Delta_I := \Big\{ \sum_{i\in I} c_i \alpha_i : c_i \geq 0, \sum c_i = 1 \Big\} \subset \tg_I
\] 
the convex hull of the set of weights $\{ \alpha_i\}_{i\in I}$, with
the convention that $\Delta_\emptyset=\{0\}$. 
Notice that $\dim \Delta_I=\dim \tg_I$ and that $\Delta_I \neq \tg_I$,
provided that $\tg_I \neq \{0\}$.
Since the relative interior of a point
is that point, we obtain the following characterization of closed $\T$-orbits:

\begin{lemma}
For $v\in V$ let $I \subset \II_N$ with $v\in U_I$. Then, the orbit $\T \cdot v$ is closed if and only if $0\in \tg$ is in the relative interior $ (\Delta_I)^o$ of $\Delta_I$.
\end{lemma}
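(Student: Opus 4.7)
My plan is to use the Hilbert--Mumford criterion (Lemma \ref{lem_oneparamT}) to translate closedness of $\T\cdot v$ into a convex-geometric statement about the position of $0$ relative to the weights $\{\alpha_i\}_{i\in I}\subset \tg_I$. The underlying observation, immediate from \eqref{eqn_Taction}, is that for $\alpha \in \tg$ the limit $\lim_{t\to\infty}\exp(t\alpha)\cdot v$ exists in $V$ if and only if $\la\alpha,\alpha_i\ra\leq 0$ for every $i\in I$; the limit equals $v$ precisely when all these inner products vanish, and otherwise lies in $U_J$ for some $J\subsetneq I$, hence outside $\T\cdot v$.

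For the direction ``$0 \in (\Delta_I)^o \Rightarrow \T\cdot v$ closed'' I use the standard fact that a point is in the relative interior of a polytope iff it is a convex combination of all the vertices with \emph{strictly positive} coefficients: so I can write $0 = \sum_{i\in I} c_i \alpha_i$ with $c_i > 0$ and $\sum c_i = 1$. Suppose for contradiction that $\T\cdot v$ is not closed. Lemma \ref{lem_oneparamT} then gives $\alpha\in\tg$ for which $\lim_{t\to\infty}\exp(t\alpha)\cdot v$ exists but lies outside the orbit. By the observation, $\la\alpha,\alpha_i\ra\leq 0$ for every $i\in I$ with strict inequality for at least one $i$. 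Then
\[
    0 \;=\; \la\alpha, 0\ra \;=\; \sum_{i\in I} c_i \,\la\alpha,\alpha_i\ra
\]
is a sum of nonpositive terms that is not identically zero, a contradiction.

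For the converse I argue by contrapositive: assuming $0 \notin (\Delta_I)^o$, I construct $\alpha\in\tg_I$ with $\la\alpha,\alpha_i\ra\leq 0$ for every $i\in I$ and strict for at least one, so that the observation exhibits a limit point outside $\T\cdot v$. If $0 \notin \Delta_I$, the separating hyperplane theorem (applied inside $\tg_I$) yields $\beta\in\tg_I$ with $\la\beta,\alpha_i\ra > 0$ for all $i\in I$, and I take $\alpha := -\beta$. If instead $0$ lies on the relative boundary of $\Delta_I$ in $\tg_I$, the supporting hyperplane theorem produces $\beta\in\tg_I\setminus\{0\}$ with $\la\beta,\alpha_i\ra \geq 0$ for all $i\in I$; not all of these can be equalities, for then $\beta$ would be orthogonal to $\tg_I = \spa\{\alpha_i : i\in I\}$ and hence vanish. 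Setting $\alpha := -\beta$ completes the construction.

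The only mildly delicate point I anticipate is ensuring a \emph{strict} inequality at some weight in the boundary case; the supporting hyperplane a priori gives only weak inequalities, and the combination $\beta\in\tg_I\setminus\{0\}$ with $\tg_I=\spa\{\alpha_i : i\in I\}$ is precisely what upgrades one of them to a strict inequality.
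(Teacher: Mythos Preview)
Your proof is correct and follows essentially the same approach as the paper's: both directions rest on the observation (from \eqref{eqn_Taction}) that $\lim_{t\to\infty}\exp(t\alpha)\cdot v$ exists and leaves $\T\cdot v$ precisely when $\la\alpha,\alpha_i\ra\le 0$ for all $i\in I$ with at least one strict inequality, together with a separating/supporting hyperplane argument in $\tg_I$. The only cosmetic differences are that the paper treats the two subcases of ``$0\notin(\Delta_I)^o$'' uniformly via a single hyperplane, and for the converse it invokes the \emph{proof} of Lemma~\ref{lem_oneparamT} to produce the required $\beta$, whereas you invoke its \emph{statement} and the positive-coefficient description of the relative interior.
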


\begin{proof}
If $0$ is not in the interior of $\Delta_I$ then $\Delta_I \neq \{0\}$ and 
there exists a hyperplane $H \subset \tg_I$,
such that $H$ does not contain $\Delta_I$ and such that $\Delta_I$ does not intersect one of the two open half-spaces defined by $H$. Thus for one of the two unit normal vectors  $\beta \in \tg_I$ to $H$ we have that $\la \beta,  \alpha_i \ra \geq 0$ for all $i\in I$, and the inequality is strict for some $i_0\in I$. Hence $\bar v = \lim_{t\to\infty} \exp(-t \beta) \cdot v$ exists by \eqref{eqn_Taction}, and we have that $\bar v \notin \T\cdot v$ since $\T\cdot v \subset U_I$ and $\bar v \notin U_I$ because $(\bar v)_{i_0} = 0$.

Conversely, if the orbit is not closed then the proof of the Lemma \ref{lem_oneparamT} implies the existence of a $\beta \in \tg$ with the same properties, from which it follows that $0$ is not in the interior of $\Delta_I$.
\end{proof}

Recall that $0 \in V_\emptyset$ and that then $\Delta_\emptyset =\{0\}$.
If $v \neq 0$, then there exists a non-empty $I \subset \II_N$ with
$v \in U_I$. The above motivates now the following 

\begin{definition}
We call a non-empty subset $I\subset \II_N$ \emph{admissible}, if $0 \in (\Delta_I)^o$.
\end{definition}

Notice that if $0 \in (\Delta_I)^o$, then
there exist positive coefficients $c_i>0$, $i\in I$, such that $0 = \sum_{i\in I} c_i \, \alpha_i$.  

In what follows we aim to show that given two different closed orbits, there exists a continuous $\T$-invariant function which separates them, and that moreover we can pick finitiely many such functions to separate any two closed orbits. 
To this end, we consider the linear map
\[
    \phi : \tg \to V\,\,;\,\,\, \lambda
     \mapsto \big( \la \lambda, \alpha_1 \ra, \ldots,  \la \lambda, \alpha_N\ra \big)\,.
\]

\begin{lemma}
A subset $I\subset \II_N$ is admissible if and only if $U_I^+ \cap \phi(\tg)^\perp \neq \emptyset$.
\end{lemma}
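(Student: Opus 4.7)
The plan is to unwind both sides of the claimed equivalence into a single concrete statement about the existence of strictly positive coefficients giving a linear relation among the $\alpha_i$, $i\in I$, and then invoke a standard fact from convex geometry to close the argument.

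First I would identify $\phi(\tg)^\perp$ explicitly. For $v=(v_1,\dots,v_N)\in V$ the condition $\la v,\phi(\lambda)\ra=0$ for all $\lambda\in\tg$ reads $\sum_{i=1}^N v_i\,\la\lambda,\alpha_i\ra=\la\lambda,\sum_i v_i\alpha_i\ra=0$ for every $\lambda\in\tg$, which is equivalent to $\sum_{i=1}^N v_i\alpha_i=0$ in $\tg$. Intersecting with $U_I^+$ forces $v_i=0$ for $i\notin I$ and $v_i>0$ for $i\in I$. Hence
\[
    U_I^+\cap\phi(\tg)^\perp\neq\emptyset\quad\Longleftrightarrow\quad\exists\,(c_i)_{i\in I}\text{ with }c_i>0\text{ and }\sum_{i\in I}c_i\,\alpha_i=0.
\]

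Next I would match this condition with admissibility. By definition, $0\in(\Delta_I)^o$ exactly when $0$ lies in the relative interior of the convex hull of $\{\alpha_i:i\in I\}$. The standard fact I want to use is that the relative interior of the convex hull of a finite set of points consists precisely of those convex combinations in which every coefficient is strictly positive; this follows from the fact that any point in the relative interior of a polytope can be slightly perturbed inside the polytope along each extreme direction, allowing one to average a strictly interior convex combination with any existing one to produce strictly positive coefficients. Applying this to $0\in\Delta_I$ yields the existence of coefficients $c_i>0$, $\sum_{i\in I}c_i=1$, with $\sum_{i\in I}c_i\alpha_i=0$. Conversely, if such strictly positive coefficients (normalized or not) exist, then $0$ is expressed as a strictly positive convex combination, hence lies in $(\Delta_I)^o$.

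Combining the two equivalences gives $I$ admissible $\Leftrightarrow\,U_I^+\cap\phi(\tg)^\perp\neq\emptyset$, as claimed. The only non-bookkeeping step is the convex-geometry characterization of the relative interior via strictly positive convex coefficients; I would either cite it or include the two-line argument above. No compactness or further structure of $\tg$ is needed: the statement is purely linear-algebraic once $\phi(\tg)^\perp$ is identified.
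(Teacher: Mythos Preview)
Your proposal is correct and follows essentially the same approach as the paper's proof. Both arguments reduce to the equivalence between $0\in(\Delta_I)^o$ and the existence of strictly positive coefficients $c_i>0$ with $\sum_{i\in I}c_i\alpha_i=0$, and then identify this with membership in $U_I^+\cap\phi(\tg)^\perp$ via the defining formula for $\phi$; the only difference is that you spell out the convex-geometry characterization of the relative interior explicitly, whereas the paper treats it as already observed (it was noted just before the definition of admissible).
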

\begin{proof}
Notice that $0 \in \big( \Delta_I\big)^o$ is equivalent to the existence of $w\in U_I^+$ such that $ 0= \sum_{i\in I} w_i \alpha_i$. But this is equivalent to
$0  = \sum_{i\in I} \big\la \lambda, w_i \alpha_i  \big\ra = \la \phi(\lambda), w\ra$
for all $\lambda \in \tg$
from which the lemma follows.
\end{proof}

Let us now fix an admissible subset $I\subset \II_N$, and let 
\[
 \bca_I := \{ w^{(1)}, \ldots, w^{(r)} \} \subset U_I^+ \cap \phi(\tg)^\perp
\] 
be a basis for $V_I \cap \phi(\tg)^\perp$ consisting of elements with positive entries for $i\in I$. Moreover, we  scale the basis elements so that for each $w\in \bca_I$ the sum of its entries is $1$. For each $w\in \bca_I$ consider the real-valued function
\[
 f^+_w : V \to \RR\,\,;\,\,\,
    v \mapsto \begin{cases}
         \prod_{i=1}^N v_i^{w_i}, \qquad & \mbox{if } v_i > 0 \hbox { for all } i\in I;\\
        0, \qquad &\mbox{otherwise}.
    \end{cases}
\]
Notice, that if $\proj_I:V\to V_I $ denotes the orthogonal projection onto $V_I$, then
\[
   f^+_w(v)=f^+_w(\proj_I(v)) \quad \textrm{ and }\quad 
\operatorname{supp}(f^+_w)=\overline{(\proj_I)^{-1}(U_I^+)}\,.
\]

\begin{lemma}\label{lem_f_w}
The function $f^+_w$ is continuous, $\T$-invariant, and $f^+_w(c \cdot v) = c \cdot f^+_w(v)$ for all $c>0$ and $v\in V$.
\end{lemma}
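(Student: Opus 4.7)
The plan is to verify the three assertions directly from the definition, handling separately the open support $U := \{v \in V : v_i > 0 \text{ for all } i \in I\}$ and its complement. Since $w \in V_I$ we have $w_j = 0$ for $j \notin I$, so on $U$ the definition reduces to the manifestly smooth expression $\prod_{i \in I} v_i^{w_i}$ with strictly positive exponents $w_i > 0$.

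For continuity, the only delicate points are those boundary points $v_0$ of $U$ where $(v_0)_{i_0} = 0$ for some $i_0 \in I$. I would argue that in a small neighborhood of such a point the remaining factors $v_i^{w_i}$ stay uniformly bounded, while $v_{i_0}^{w_{i_0}} \to 0$ from above because $w_{i_0} > 0$ strictly; hence $f_w^+ \to 0 = f_w^+(v_0)$. At points $v_0$ outside the closed set $\overline{(\proj_I)^{-1}(U_I^+)}$, the function vanishes identically in a neighborhood, so continuity is trivial there.

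For $\T$-invariance, I would invoke the explicit formula \eqref{eqn_Taction}: on $U$,
\[
    f_w^+(\exp(\lambda)\cdot v)
    = \prod_{i=1}^N \bigl(e^{\la \lambda,\alpha_i\ra} v_i\bigr)^{w_i}
    = \exp\Bigl(\sum_{i=1}^N w_i \la \lambda,\alpha_i\ra\Bigr)\, f_w^+(v)
    = e^{\la \phi(\lambda),w\ra}\, f_w^+(v),
\]
and the condition $w \in \phi(\tg)^\perp$ forces this exponential factor to equal $1$. Outside $U$ both sides vanish, since $U$ is $\T$-invariant: each coordinate is scaled by the positive factor $e^{\la \lambda,\alpha_i\ra}$, so the sign of $v_i$ is preserved. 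Positive homogeneity of degree one is immediate from $\sum_i w_i = 1$, giving $f_w^+(cv) = c^{\sum_i w_i} f_w^+(v) = c\, f_w^+(v)$ on $U$, and $0 = c\cdot 0$ off of it.

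There is really no serious obstacle here; the only point requiring a moment of care is continuity at $\partial U$, where strict positivity of the exponents $w_i$ for $i \in I$ makes the argument routine.
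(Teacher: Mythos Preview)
Your proof is correct and follows essentially the same approach as the paper's: the paper dismisses continuity in one line (``clear, since $w_i \geq 0$''), performs the identical $\T$-invariance computation you give, and omits the homogeneity verification entirely. Your treatment is simply more detailed, particularly at the boundary of $U$, but there is no substantive difference in method.
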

\begin{proof}
Continuity is clear, since  $w_i \geq 0$ for all $i \in \II_N$. To prove $\T$-invariance, first observe that $V_I\backslash U_I^+$ is a $\T$-invariant set. On the other hand, for 
$\tilde v=\proj_I(v)\in U_I^+$ we compute directly using \eqref{eqn_Taction} and the fact that $w\in \phi(\tg)^\perp$:
\[
    f_w^+( \exp(\lambda) \cdot v) = \prod_{i=1}^N (e^{\la \lambda, \alpha_i\ra} v_i)^{w_i} = e^{\la \lambda, \sum w_i\alpha_i\ra} \, f^+_w (v) = e^{\la \phi(\lambda), w\ra} \, f_w^+ (v) = f_w^+(v)\,.
\] 
This shows the claim.
\end{proof}

\begin{lemma}\label{lem_sepUIplus}
If $\oca_1 \neq \oca_2$ are two closed $\T$-orbits in $U_I^+$ then there exists $w\in \bca_I$ such that $f_w^+(\oca_1) \neq f_w^+(\oca_2)$.
\end{lemma}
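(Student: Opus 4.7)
The plan is to pass to exponential (logarithmic) coordinates on $U_I^+$, which linearizes the $\T$-action to a translation, and then conclude by a pure duality argument in linear algebra.

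First, I would introduce the diffeomorphism $L : U_I^+ \to \RR^I$ defined by $L(v) = (\log v_i)_{i\in I}$. A direct computation from \eqref{eqn_Taction} gives
\[
    L(\exp(\lambda)\cdot v) = L(v) + \psi(\lambda), \qquad \psi(\lambda) := (\la \lambda, \alpha_i\ra)_{i\in I}.
\]
Setting $W := \psi(\tg) \subset \RR^I$, which is automatically a closed linear subspace, the image under $L$ of any $\T$-orbit in $U_I^+$ is the affine subspace $L(v)+W$. In particular, two orbits $\oca_1, \oca_2 \subset U_I^+$ coincide if and only if $L(v_1) - L(v_2) \in W$ for any choice $v_j \in \oca_j$. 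Closedness of the orbits is not actually needed for the separation statement; it is only relevant insofar as the lemma pertains to the target setting.

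Next, I would translate the functions $f_w^+$ into this picture. Using the identification $V_I \simeq \RR^I$ via the orthonormal basis $\{e_i\}_{i\in I}$, the definition of $f_w^+$ gives, for $v\in U_I^+$,
\[
    \log f_w^+(v) = \sum_{i\in I} w_i \log v_i = \la w, L(v)\ra.
\]
Thus $f_w^+(\oca_1) = f_w^+(\oca_2)$ is equivalent to $\la w, L(v_1) - L(v_2)\ra = 0$. Moreover, the $\T$-invariance of $f_w^+$ (Lemma \ref{lem_f_w}) corresponds exactly to $\la w, \psi(\lambda)\ra = 0$ for all $\lambda \in \tg$, i.e.\ to $\sum_{i\in I} w_i\alpha_i = 0$, which is exactly the condition $w \in V_I \cap \phi(\tg)^\perp$ used to define $\bca_I$. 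By the preceding lemma, $\bca_I$ is a basis of this space, which under our identification is precisely the annihilator $W^\perp$ of $W$ inside $\RR^I \simeq V_I$.

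To conclude, I would argue by duality: given $\oca_1 \neq \oca_2$, the vector $\xi := L(v_1) - L(v_2)$ does not lie in $W$. By biduality $W^{\perp\perp} = W$, the linear functional $w \mapsto \la w, \xi\ra$ cannot vanish identically on $W^\perp$, hence it cannot vanish on every element of the basis $\bca_I$. Any basis element on which it is nonzero supplies a $w\in \bca_I$ with $f_w^+(\oca_1)\neq f_w^+(\oca_2)$.

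The main obstacle is essentially notational bookkeeping: making the identifications between $V_I$, $\RR^I$, and the various dual/annihilator spaces cleanly and consistently, and verifying that the $\T$-invariance condition of the previous lemma coincides with membership in $W^\perp$. No deep input beyond the preceding lemma and the linearization via $L$ is needed.
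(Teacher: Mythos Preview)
Your proof is correct and follows essentially the same approach as the paper: both pass to logarithmic coordinates $\log_I$ (your $L$), use that $\log f_w^+(v) = \la w, \log_I(v)\ra$, and conclude via the duality $V_I \cap \phi(\tg)^\perp = W^\perp$ together with $(W^\perp)^\perp = W$. The paper phrases this as a short contradiction argument rather than a direct biduality argument, but the content is identical.
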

\begin{proof}
Assume that this is not the case. Let $v \in \oca_1$, $\bar v\in \oca_2$, and consider the map $\log_I : U_I^+ \to V_I$, assigning to each vector $v\in U_I^+$ the vector $\log_I (v) \in V_I$ whose $i$-th entry is $\log(v_i)$, for all $i\in I$. For each $w \in \bca_I$ we have that 
\[
    \la w, \log_I (v)\ra = \log f_w^+(v) =\log f_w^+(\bar v) = \la w, \log_I (\bar v)\ra\,.
\]
Hence $\log_I (v) - \log_I (\bar v) \perp V_I \cap \phi(\tg)^\perp$. In other words, $\log_I (v) - \log_I (\bar v) \in V_I \cap \phi(\tg)$, from which it immediately follows that $v\in \T\cdot \bar v$. Contradiction.
\end{proof}

In order to separate orbits that lie in different connected components of $U_I$, we argue as follows: for each choice of signs $\sigma \in \{ \pm 1 \}^N$, let $T_\sigma : V \to V$ be the $\T$-equivariant linear map that changes the sign of each coordinate according to $\sigma$. For any connected component $U_I^c$ of $U_I$ there exists $\sigma \in \{ \pm 1\}^N$ such that $T_\sigma (U_I^c) = U_I^+$. We then define the functions $f_w^\sigma$, $w\in \bca_I$, by
$f_w^\sigma  = f_w^+ \circ T_\sigma$.
Clearly, they satisfy Lemma \ref{lem_f_w}, and they separate orbits in the corresponding connected component $U_I^c$ of $U_I$. We
consider now the finite set of continuous, $\T$-invariant, real-valued functions on $V$:
\[
    \fca := \big\{ f_w^\sigma  \, \, : \, \,  w\in \bca_I, \sigma \in \{ \pm 1\}^N, I\subset \II_N \hbox{ admissible} \big\}.
\]  
Notice that  $\operatorname{supp} (f_w^\sigma) \cap V_I = \overline {U_I^c}$.

\begin{proposition}[Separation of closed orbits]\label{prop_sepclosedorbits}
Let $L = \vert \fca \vert \in \NN$. Then there exists a continuous, $\T$-invariant map 
$\Phi : V \to \RR^L$, 
such that $\Phi(\oca_1) \neq \Phi(\oca_2)$ for any two closed orbits $\oca_1 \neq\oca_2$.
\end{proposition}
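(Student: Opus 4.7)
The plan is to take $\Phi : V \to \RR^L$ to be the map whose $L$ coordinate functions are precisely the elements of $\fca$, enumerated in any fixed order. Continuity and $\T$-invariance of $\Phi$ are then automatic from Lemma \ref{lem_f_w} together with the $\T$-equivariance of each $T_\sigma$, so the whole content of the proposition reduces to checking that $\Phi(\oca_1) \neq \Phi(\oca_2)$ for any two distinct closed $\T$-orbits $\oca_1, \oca_2$.

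Given such a pair, I would pick $v_j \in \oca_j$ and let $I_j \subset \II_N$ be the unique subset with $v_j \in U_{I_j}$, denote by $U_{I_j}^{c_j}$ the connected component of $U_{I_j}$ containing $v_j$, and choose $\sigma_j \in \{\pm 1\}^N$ with $T_{\sigma_j}(U_{I_j}^{c_j}) = U_{I_j}^+$. The closed-orbit characterization proved just before forces $0 \in (\Delta_{I_j})^o$ whenever $I_j \neq \emptyset$, so each such $I_j$ is admissible and the basis $\bca_{I_j}$ is nonempty with every element having strictly positive coordinates on $I_j$. I would then exhibit a single separating $f_w^\sigma \in \fca$ by splitting into three cases. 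First, if $I_1 = I_2 = I$ and $c_1 = c_2$, then $T_{\sigma_1}\oca_1$ and $T_{\sigma_1}\oca_2$ are distinct closed $\T$-orbits sitting in $U_I^+$, and Lemma \ref{lem_sepUIplus} immediately produces some $w \in \bca_I$ with $f_w^+(T_{\sigma_1}\oca_1) \neq f_w^+(T_{\sigma_1}\oca_2)$, i.e.\ $f_w^{\sigma_1}$ separates them. Second, if $I_1 = I_2 = I$ but $c_1 \neq c_2$, then $T_{\sigma_1}$ sends $U_I^{c_2}$ to a connected component of $U_I$ different from $U_I^+$; in particular some $I$-coordinate of $T_{\sigma_1}v_2$ is negative, so for any $w\in \bca_I$ we get $f_w^{\sigma_1}(v_1) > 0 = f_w^{\sigma_1}(v_2)$. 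Third, if $I_1 \neq I_2$, after possibly swapping the two orbits I can find $i_0 \in I_1 \setminus I_2$; for any $w \in \bca_{I_1}$ the positivity $w_{i_0} > 0$ together with $(T_{\sigma_1}v_1)_i > 0$ for all $i\in I_1$ gives $f_w^{\sigma_1}(v_1) > 0$, while $(v_2)_{i_0} = 0$ forces $f_w^{\sigma_1}(v_2) = 0$. The degenerate case $\oca_1 = \{0\}$, $I_1 = \emptyset$, fits into this third case after interchanging the two orbits.

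The genuinely substantive ingredient—that two distinct closed orbits already confined to $U_I^+$ can be separated by a single $f_w^+$ with $w \in \bca_I$—is already packaged in Lemma \ref{lem_sepUIplus}. The rest of my proof is essentially a bookkeeping argument across the finitely many strata $U_I^c$, organised by the admissibility of $I$. The one point I would want to double-check is that each chosen weight $w \in \bca_{I}$ really has strictly positive coordinates on all of $I$ (so that the estimate $f_w^{\sigma_1}(v_1) > 0$ holds rather than just $\geq 0$), but this is exactly how $\bca_I$ was constructed, as a basis of $V_I \cap \phi(\tg)^\perp$ inside the open cone $U_I^+$. I do not expect any deeper obstacle beyond this case analysis.
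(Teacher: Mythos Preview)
Your proof is correct and follows essentially the same approach as the paper's: define $\Phi$ via the functions in $\fca$, then split into the three cases (same $I$ and same component, same $I$ but different components, different $I$'s), invoking Lemma~\ref{lem_sepUIplus} for the first and the support description $\operatorname{supp}(f_w^\sigma)\cap V_I = \overline{U_I^c}$ (which is exactly what underlies your explicit positivity/vanishing arguments) for the other two. The only cosmetic difference is that the paper phrases the latter two cases via the support formula rather than by tracking signs of coordinates, but the content is identical.
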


\begin{proof}
The coordinate functions of the map $\Phi$ are of course just functions
$f_w^\sigma$ in $\fca$.

First assume that $\oca_1, \oca_2 \subset U_I$. If they belong to the same connected component of $U_I$, which without loss of generality we may assume to be $U_I^+$, then they are separated by Lemma \ref{lem_sepUIplus}. On the other hand, if this is not the case then the existence of a separating function follows 
immediately from $\operatorname{supp} (f_w^\sigma) \cap V_I = \overline {U_I^c}$. 

We are left with the case $\oca_1 \subset U_I$, $\oca_2 \subset U_J$ with $I\neq J$.
Suppose that $j\in J \backslash I$. Then, there exists $f_w^\sigma \in \fca$ with
 $w\in \bca_J$ such that $f_w^\sigma(\oca_2) > 0$ and $f_w^\sigma(\oca_1) = 0$.
\end{proof}

\section{Separation of closed $\T$-invariant sets}\label{sec:sepclosedT}

The linear action of the real reductive group $\G\subset \Gl(V)$ on $(V,\ip)$
provides us with a smooth {\it action field} for any $A \in \ggo$:
\[
   X_A(v):= \tfrac{d}{dt}\big\vert_{t=0} \exp(tA)\cdot v= A \cdot v.
\] 
Notice that for an initial value $v_0 \in V$ the
curve $v(t) :=\exp(tA)\cdot v_0$ is the corresponding integral curve of $X_A$.
Recall also, that we denoted by $\ggo =\kg \oplus \pg$ the Cartan decomposition
of the Lie algebra $\ggo $ of $\G$. Then, for $A \in \kg$ the vector fields $X_A$ are Killing fields,
meaning that their flows consist of isometries.

For  fixed \mbox{$A \in \pg$} and $v \in V$ we let 
$$
 d (t):=d_{A,v}(t):=\Vert{ \exp(tA)\cdot v} \Vert^2
$$
denote the square of the distance function to the origin along $\exp(tA)\cdot v$.

\begin{lemma}[Convexity of the distance function]\label{lem_convex}
Let \mbox{$A \in \pg$} and $v \in V$ be given. 
Then $d'(0)= 2\cdot \langle A \cdot v ,v\rangle$
and  $ d''(t)= 4 \cdot \Vert A  \cdot \exp(tA)\cdot v \Vert^2$.
\end{lemma}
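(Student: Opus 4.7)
The plan is to write the curve $v(t) := \exp(tA)\cdot v$ explicitly, note that it satisfies the linear ODE $v'(t) = A\cdot v(t)$, and then differentiate $d(t) = \langle v(t), v(t)\rangle$ twice, exploiting the fact that $A \in \pg \subset \Sym(V,\ip)$ is symmetric with respect to $\ip$.

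First I would compute $d'(t) = 2\langle v'(t), v(t)\rangle = 2\langle A\cdot v(t), v(t)\rangle$ directly from bilinearity of the inner product and the ODE for $v(t)$. Evaluating at $t=0$ and using $v(0)=v$ yields $d'(0) = 2\langle A\cdot v, v\rangle$, which is the first claim.

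Next I would differentiate once more. Applying the product rule to $d'(t) = 2\langle A \cdot v(t), v(t)\rangle$ and using $v'(t)=A\cdot v(t)$ gives
\[
  d''(t) = 2\langle A\cdot v'(t), v(t)\rangle + 2\langle A\cdot v(t), v'(t)\rangle = 2\langle A^2\cdot v(t), v(t)\rangle + 2\langle A\cdot v(t), A\cdot v(t)\rangle.
\]
Here is the one place where the hypothesis $A\in\pg$ is essential: since $A$ is symmetric with respect to $\ip$, we have $\langle A^2\cdot v(t), v(t)\rangle = \langle A\cdot v(t), A\cdot v(t)\rangle = \Vert A\cdot v(t)\Vert^2$. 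Combining the two terms yields $d''(t) = 4\Vert A\cdot \exp(tA)\cdot v\Vert^2$, which is the second claim.

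There is no real obstacle here; the computation is elementary, and the only subtlety is to recognize that the assumption $A\in\pg$ is used exactly once, to convert $\langle A^2 w, w\rangle$ into $\Vert Aw\Vert^2$. Note, as a side remark, that this immediately implies $d''(t)\geq 0$, so $d(t)$ is a (smooth) convex function of $t$, justifying the name of the lemma; moreover $d''(t)=0$ at some $t$ if and only if $A\cdot \exp(tA)\cdot v=0$, i.e.~$\exp(tA)\cdot v$ lies in $\Ker A$, in which case $d$ is constant.
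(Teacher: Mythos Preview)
Your proof is correct and follows exactly the same approach as the paper's: compute $d'(t)=2\langle A\cdot \exp(tA)\cdot v,\exp(tA)\cdot v\rangle$, then differentiate once more and use $A^t=A$ to combine the two terms. The paper simply states this more tersely, but the content is identical.
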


\begin{proof}
We have $d'(t)=2 \cdot \langle A \cdot \exp(tA)\cdot v, \, \exp(tA)\cdot v \rangle$.
From this the claim follows immediately using that $A^t = A$.
\end{proof}

\begin{corollary}\label{cor_limitlowernorm}
Let $A \in \pg$, $v \in V$ and suppose that $\lim_{t \to \infty} \exp(tA)\cdot v=
\bar v \neq v$ exists.
Then for all $t \in \RR$ one has $\Vert \exp(tA) \cdot v\Vert >\Vert \bar v\Vert$
for all $t \in \RR$.
\end{corollary}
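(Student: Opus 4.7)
The plan is to exploit the convexity of $d(t) = \Vert \exp(tA)\cdot v\Vert^2$ established in Lemma \ref{lem_convex}. Since $d''(t) = 4\Vert A\cdot \exp(tA)\cdot v\Vert^2 \geq 0$, the function $d$ is convex on $\RR$ and $d'$ is non-decreasing.

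Next I would use that the limit $\lim_{t\to\infty} d(t) = \Vert \bar v\Vert^2$ is finite to deduce that $d'(t) \leq 0$ for every $t\in \RR$. Indeed, were $d'(t_0) > 0$ at some $t_0$, monotonicity of $d'$ would give $d(t) \geq d(t_0) + d'(t_0)(t - t_0)$ for $t \geq t_0$, contradicting the existence of a finite limit at $+\infty$. Hence $d$ is non-increasing on all of $\RR$, which yields the weak inequality $d(t) \geq \lim_{s\to\infty} d(s) = \Vert \bar v\Vert^2$ for every $t$.

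It remains to upgrade this to a strict inequality, and this is the only place where the hypothesis $\bar v \neq v$ enters. Suppose for contradiction that $d(t_0) = \Vert \bar v\Vert^2$ for some $t_0 \in \RR$. Being non-increasing with limit $\Vert \bar v\Vert^2$, $d$ must be constant on $[t_0,\infty)$, hence $d''\equiv 0$ there; by the formula for $d''$ this forces $A\cdot \exp(tA)\cdot v = 0$ for all $t \geq t_0$. Setting $w := \exp(t_0 A)\cdot v$, we get $A\cdot w = 0$, so $\exp(sA)\cdot w = w$ for every $s\in\RR$; pulling back, $v = \exp(-t_0 A)\cdot w = w$, and then $\bar v = \lim_{t\to\infty}\exp(tA)\cdot v = v$, contradicting $\bar v \neq v$. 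I do not anticipate any real obstacle here beyond this convex-analysis dichotomy: the content is essentially one-dimensional once Lemma \ref{lem_convex} is in hand.
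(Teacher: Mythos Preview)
Your argument is correct and is exactly the convexity argument the paper has in mind: the corollary is stated immediately after Lemma \ref{lem_convex} without a separate proof, so the intended deduction is precisely the one you gave (convexity of $d$, bounded limit forces $d'\le 0$, and equality somewhere forces the flow to be constant, contradicting $\bar v\neq v$).
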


Let us mention, that
 for a fixed $v$, the function $\G\to \RR$, $\exp(tA) \mapsto d(t)$ is usually called a \emph{Kempf-Ness function} in the literature.

Next, set $\T = \exp(\tg)$, with $\tg \subset \Sym(V,\ip)$ abelian, and let
\[
    \mca_\T := \{ v\in V : \Vert v \Vert \leq \Vert t \cdot v\Vert \hbox{ for all } t\in \T\}
\]
denote the set of minimal vectors for the $\T$-action. 
For any $v\in \mca_\T$ the orbit 
$\T\cdot v$ is closed by Lemma \ref{lem_oneparamT} and Corollary \ref{cor_limitlowernorm}. Conversely, for a closed $\T$-orbit $\oca$, the closest point to the origin in $\oca$ belongs to $\mca_\T$. 

Notice that by Lemma \ref{lem_convex} the condition of $v\in V$ being the closest point to the origin of $\T\cdot v$ is equivalent to
$ \la \lambda\cdot v, v\ra = 0$  for all $\lambda \in \tg$.
Since this condition is linear in $\lambda$ and polynomial in $v$,
 $\mca_\T$ is a closed subset of $V$. 
 
 Next, we show that the continuous, $\T$-invariant map $\Phi$ defined in
 the proof of Proposition \ref{prop_sepclosedorbits} is a proper map.

\begin{lemma}\label{lem_Phiproper}
There exists $C >0$ such that $\Vert v \Vert \leq C \cdot \Vert\Phi(v)\Vert$ for all $v\in \mca_\T$.
\end{lemma}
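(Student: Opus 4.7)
The plan is to exploit homogeneity together with the characterization of closed $\T$-orbits obtained earlier. First I would observe that every function $f_w^\sigma \in \fca$ is $1$-homogeneous: indeed the coefficients $w_i$ of any $w \in \bca_I$ sum to $1$, so $f_w^+(c v) = c \cdot f_w^+(v)$ for $c > 0$ by Lemma \ref{lem_f_w}, and this carries over to $f_w^\sigma = f_w^+ \circ T_\sigma$ since $T_\sigma$ is linear. Consequently the map $\Phi$ is $1$-homogeneous, and the claimed inequality $\Vert v \Vert \leq C \cdot \Vert \Phi(v)\Vert$ on $\mca_\T$ is equivalent to showing that $\Vert \Phi(\cdot)\Vert$ is bounded below by some positive constant on the compact set $\mca_\T \cap S$, where $S \subset V$ denotes the unit sphere.

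I would then argue by contradiction. Suppose there exists a sequence $(v_k) \subset \mca_\T \cap S$ with $\Phi(v_k) \to 0$. By compactness of the sphere we may pass to a subsequence converging to some $v_\infty \in S$, in particular $v_\infty \neq 0$. Since $\mca_\T$ is a closed subset of $V$ and $\Phi$ is continuous, we conclude $v_\infty \in \mca_\T$ with $\Phi(v_\infty) = 0$. As $V = \bigcup_{I \subset \II_N} U_I$ is a disjoint union, there is a unique $I \subset \II_N$ and a unique connected component $U_I^c$ of $U_I$ with $v_\infty \in U_I^c$. Because $v_\infty \neq 0$, we have $I \neq \emptyset$.

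The key step is then to use that $v_\infty \in \mca_\T$ forces $\T \cdot v_\infty$ to be a closed orbit (by the remark following the definition of $\mca_\T$), which by the characterization of closed orbits (the lemma immediately preceding the definition of admissible set) implies that $I$ is admissible. Choose $\sigma \in \{\pm 1\}^N$ so that $T_\sigma(U_I^c) = U_I^+$, and pick any $w \in \bca_I$. Since $I$ is admissible, $\bca_I$ is non-empty, and since $w_i > 0$ for $i \in I$ and the coordinates of $T_\sigma(v_\infty)$ are strictly positive on $I$, we get $f_w^\sigma(v_\infty) = f_w^+(T_\sigma(v_\infty)) > 0$. This contradicts $\Phi(v_\infty) = 0$ since $f_w^\sigma$ is a coordinate of $\Phi$.

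The main obstacle I anticipate is making sure the limit $v_\infty$ genuinely has $I \neq \emptyset$ and that the characterization of closed orbits applies to it; both follow painlessly once one notes that $\mca_\T$ is closed and contains no limit going to $0$ among unit vectors, and that the admissibility criterion is exactly the one tailored to produce a strictly positive value of some $f_w^\sigma$. No further technicalities are needed beyond the definitions and the closed-orbit lemma already proved.
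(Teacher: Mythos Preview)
Your argument is correct and follows essentially the same approach as the paper: both use the $1$-homogeneity of $\Phi$ to reduce to the unit sphere, argue by contradiction via a convergent subsequence in the closed set $\mca_\T$, and then observe that the limit point lies in a closed $\T$-orbit, hence in some $U_I$ with $I$ admissible, so that some $f\in\fca$ is strictly positive there. The only difference is that you spell out the choice of $\sigma$ and $w\in\bca_I$ explicitly, whereas the paper simply asserts the existence of such an $f$.
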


\begin{proof}
Recall that $\Phi(c\cdot v) = c \cdot \Phi(v)$ for all $c>0$ and $v\in V$. Assume
 that there exists a sequence $(v_k)_{k\in \NN} \subset \mca_\T$, $\Vert v_k\Vert \equiv 1$, with $\lim_{k \to \infty}\Phi(v_k) = 0$. For a subsequential limit $\bar v \in \mca_\T$, $\Vert \bar v\Vert=1$, we have $\Phi(\bar v) = 0$. But the orbit $\T \cdot \bar v$ is closed and non-trivial, 
 hence contained in some $U_I$.
 As a consequence, there exists one function $f\in \fca$ with $f(\T \cdot \bar v) > 0$. 
 But this contradicts $\Phi(\bar v) = 0$.
\end{proof}

\begin{corollary}\label{cor_nullconeclosedT}
The nullcone $\{ v\in V : 0 \in \overline{\T \cdot v} \}$ is a closed subset.
\end{corollary}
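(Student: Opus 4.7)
The plan is to identify the nullcone with the preimage $\Phi^{-1}(\{0\})$ of the separating map $\Phi\colon V\to\RR^L$ from Proposition~\ref{prop_sepclosedorbits}. Since $\Phi$ is continuous, this immediately yields closedness.

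First I would observe that $\Phi(0)=0$: for every admissible $I\subset\II_N$ and every $w\in\bca_I$, we have $w_i>0$ for some $i\in I$ (in fact for all $i\in I$), so $f^+_w(0)=0$, and hence $f^\sigma_w(0)=0$ for every choice of signs. The forward inclusion $\nca_\T\subset\Phi^{-1}(\{0\})$ is then just continuity plus $\T$-invariance: if $0\in\overline{\T\cdot v}$, pick $t_k\in\T$ with $t_k\cdot v\to 0$; then
\[
    \Phi(v)=\Phi(t_k\cdot v)\xrightarrow{k\to\infty}\Phi(0)=0.
\]

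For the reverse inclusion, suppose $\Phi(v)=0$. By Corollary~\ref{cor_oneclosedorbit}, the closure $\overline{\T\cdot v}$ contains a closed $\T$-orbit $\oca$. That closed orbit realizes its distance to the origin at some $\bar w\in\oca\cap\mca_\T$. Continuity and $\T$-invariance of $\Phi$ (applied along a sequence in $\T\cdot v$ converging to $\bar w$) give $\Phi(\bar w)=\Phi(v)=0$. Now Lemma~\ref{lem_Phiproper} forces
\[
    \Vert\bar w\Vert\le C\cdot\Vert\Phi(\bar w)\Vert=0,
\]
so $\bar w=0$. Thus $0\in\overline{\T\cdot v}$, which is precisely $v\in\nca_\T$. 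Combining both inclusions, $\nca_\T=\Phi^{-1}(\{0\})$ is closed.

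The only subtle point is ensuring that the closed orbit produced by Corollary~\ref{cor_oneclosedorbit} actually meets $\mca_\T$; this uses the abelian convexity from Lemma~\ref{lem_convex}, which guarantees that along any one-parameter subgroup the squared-norm function is convex and hence attains its infimum on a closed orbit. Once that point is in hand, Lemma~\ref{lem_Phiproper} does the heavy lifting, and there is no further obstacle.
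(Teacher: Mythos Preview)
Your proposal is correct and follows exactly the paper's approach: the paper's one-line proof simply asserts that $0\in\overline{\T\cdot v}$ if and only if $\Phi(v)=0$, and you have spelled out both implications carefully. One minor remark: your final paragraph overstates the subtlety --- that a closed orbit meets $\mca_\T$ is immediate (a nonempty closed subset of $V$ attains its distance to the origin), and does not really require the convexity of Lemma~\ref{lem_convex}.
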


\begin{proof}
We have that $0\in \overline{\T\cdot v}$ if and only if $\Phi(v) = 0$.
\end{proof}

\begin{corollary}[Separation of closed $\T$-invariant subsets]\label{cor_sepclosedTinvsets}
Let $Z_1, Z_2 \subset V$ be two closed, disjoint, $\T$-invariant subsets. Then, there exists a continuous $\T$-invariant function $f: V \to [0,1]$ such that $f|_{Z_1} \equiv 0$ and 
$f\vert_{Z_2} \equiv 1$. 
\end{corollary}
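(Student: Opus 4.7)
The plan is to use the separating map $\Phi: V \to \RR^L$ from Proposition~\ref{prop_sepclosedorbits} together with the properness estimate of Lemma~\ref{lem_Phiproper}. The separating function $f$ will be of the form $f = h \circ \Phi$ for a suitable continuous $h:\RR^L \to [0,1]$ produced by Urysohn's lemma, so the main task is to verify that $\Phi(Z_1)$ and $\Phi(Z_2)$ are disjoint closed subsets of $\RR^L$.

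First I would check disjointness of $\Phi(Z_1)$ and $\Phi(Z_2)$. Suppose $v_i \in Z_i$ with $\Phi(v_1) = \Phi(v_2)$. Since $Z_i$ is closed and $\T$-invariant, $\overline{\T \cdot v_i} \subset Z_i$, and by Corollary~\ref{cor_oneclosedorbit} this closure contains a closed $\T$-orbit $\oca_i$. Since $\Phi$ is continuous and $\T$-invariant, it is constant on $\overline{\T\cdot v_i}$, giving $\Phi(\oca_i) = \Phi(v_i)$, so $\Phi(\oca_1) = \Phi(\oca_2)$. Proposition~\ref{prop_sepclosedorbits} then forces $\oca_1 = \oca_2$, contradicting $Z_1 \cap Z_2 = \emptyset$.

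Next I would show that $\Phi(Z_i)$ is closed in $\RR^L$. The argument above also shows that $\Phi(Z_i) = \Phi(Z_i \cap \mca_\T)$: for each $v \in Z_i$, pick a closed orbit $\oca \subset \overline{\T\cdot v} \subset Z_i$; then $\oca \cap \mca_\T \neq \emptyset$ (the closest point to the origin in $\oca$ lies in $\mca_\T$), and $\Phi$ takes the same value on $v$, on $\oca$, and on any point of $\oca \cap \mca_\T$. The set $Z_i \cap \mca_\T$ is closed in $V$, and by Lemma~\ref{lem_Phiproper} the restriction $\Phi|_{\mca_\T}$ is proper: indeed, if $K\subset \RR^L$ is compact and $(v_k) \subset \mca_\T$ satisfies $\Phi(v_k) \in K$, then $\|v_k\| \leq C\|\Phi(v_k)\|$ is bounded, so $(v_k)$ has a convergent subsequence whose limit lies in $\mca_\T$ by closedness. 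Therefore $\Phi(Z_i \cap \mca_\T)$ is closed in $\RR^L$.

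With $\Phi(Z_1)$ and $\Phi(Z_2)$ two disjoint closed subsets of $\RR^L$, I would then set
\[
 h(x) := \frac{\operatorname{dist}(x, \Phi(Z_1))}{\operatorname{dist}(x, \Phi(Z_1)) + \operatorname{dist}(x, \Phi(Z_2))},
\]
which is continuous with values in $[0,1]$, vanishing on $\Phi(Z_1)$ and equal to $1$ on $\Phi(Z_2)$. Setting $f := h \circ \Phi$ yields the required continuous $\T$-invariant function. The main obstacle is the verification that $\Phi(Z_i)$ is closed, as it requires combining three ingredients: the fact that every closed $\T$-invariant set contains closed orbits in the closure of each of its orbits, the properness estimate on $\mca_\T$, and the constancy of $\Phi$ on orbit closures.
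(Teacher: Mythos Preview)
Your proof is correct and follows essentially the same approach as the paper: push the problem through $\Phi$, show $\Phi(Z_1)$ and $\Phi(Z_2)$ are disjoint closed subsets of $\RR^L$ using Corollary~\ref{cor_oneclosedorbit}, Lemma~\ref{lem_Phiproper}, and Proposition~\ref{prop_sepclosedorbits}, then apply Urysohn. The only cosmetic differences are that you phrase closedness via properness of $\Phi|_{\mca_\T}$ and write out the Urysohn function explicitly, whereas the paper runs a direct sequence argument and quotes Urysohn's lemma.
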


\begin{proof}
By Urysohn's Lemma it 
is enough to show that $A_1:=\Phi(Z_1)$, \mbox{$A_2:=\Phi(Z_2)$} are closed, disjoint subsets,
since then we can set $f:=d \circ \Phi$, where $d:\RR^L \to [0,1]$ is a continuous function
with $d\vert_{A_1}\equiv 0$ and $d\vert_{A_2}\equiv 1$: see e.g. \cite[Ch.~I, Lemma 10.2]{Bred93}.

To see that $\Phi(Z_1)$ is closed consider a sequence $(\Phi(v_k))_{k\in \NN} \subset \Phi(Z_1)$ converging to some $\Phi_0 \in \RR^L$. Since $Z_1$ is closed and $\T$-invariant, we may assume that $v_k \in \mca_\T$ for all $k$ (recall that any $\T$-orbit has a closed $\T$-orbit in its closure by Corollary \ref{cor_oneclosedorbit}, and that $\Phi$ is continuous and $\T$-invariant). By Lemma \ref{lem_Phiproper} we then have that $(v_k)$ is bounded, thus it subconverges to some $\bar v \in Z_1$. Now $\Phi_0 = \Phi(\bar v)$, as we wanted to show. Clearly also $\Phi(Z_2)$ is closed. 

If $v_1 \in Z_1$, $v_2 \in Z_2$ are such that $\Phi(v_1) = \Phi(v_2)$, then as above we may assume that $v_1, v_2 \in \mca$, so that the corresponding $\T$-orbits are closed. But this contradicts Proposition \ref{prop_sepclosedorbits}. 
\end{proof}

\section{The general case of real reductive groups}\label{sec_generalactions}

We now focus on proving Theorem \ref{thm_realGIT}. The idea is to reduce it to the abelian case, already settled above. More precisely, let us fix $\tg \subset \pg$ a maximal abelian subalgebra, and let $\T := \exp(\tg)$ be the corresponding connected abelian Lie subgroup of $\G$. 
It will be proved in  Corollay \ref{cor_KTK} that one has
$\G = \K \T \K$, which in some sense says that  the non-compactness in $\G$ is abelian. 

We aim to proving that orbits containing minimal vectors are closed. Recall that we only consider in $V$ the standard vector space topology. Using the convexity of orbits of one-parameter subgroups (Lemma \ref{lem_convex}),  as a first step we prove the following


\begin{lemma}\label{lem_minlocclosed}
Let $\vmin \in \mca \subset V$ be a minimal vector with $\Vert \vmin\Vert=1$ and assume that $G\cdot \vmin$ is not closed. Then, there exists $\epsilon=\epsilon_{\vmin}>0$, such that  $\Vert v\Vert \geq 1+\epsilon$ for any $v \in \overline{G\cdot \vmin}\backslash G\cdot \vmin$.
\end{lemma}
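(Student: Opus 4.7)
The strategy is to reduce to the abelian case settled in Sections~\ref{sec_torusactions}--\ref{sec:sepclosedT}, using the decomposition $\G=\K\T\K$ (Corollary~\ref{cor_KTK}) to peel off the compact factors. I argue by contradiction: suppose no such $\epsilon$ exists. Since $\vmin$ is minimal, every element of $\overline{\G\cdot\vmin}$ has norm at least $1$, so the set $\{v\in \overline{\G\cdot\vmin}:\Vert v\Vert\leq 2\}$ is compact and contains a point $w\in \overline{\G\cdot\vmin}\setminus \G\cdot\vmin$ with $\Vert w\Vert=1$. I fix a sequence $g_n\in \G$ with $g_n\cdot\vmin\to w$, decompose $g_n=k_n\, t_n\, k_n'$ with $k_n,k_n'\in \K$ and $t_n\in \T$, and extract a subsequence along which $k_n\to k$ and $k_n'\to k'$. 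Setting $v_n':=k_n'\cdot\vmin\to v':=k'\cdot\vmin$, I obtain $t_n\cdot v_n'=k_n^{-1}\cdot(g_n\cdot\vmin)\to k^{-1}\cdot w$; in particular $\Vert t_n\cdot v_n'\Vert\to 1$.

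Since $\K$ acts by isometries, $v'\in \K\cdot\vmin\subset \mca$, so $\T\cdot v'$ is closed and $\Vert t\cdot v'\Vert\geq \Vert v'\Vert=1$ for every $t\in \T$. Applying the continuous, $\T$-invariant separation map $\Phi$ of Proposition~\ref{prop_sepclosedorbits}, continuity yields $\Phi(v_n')\to \Phi(v')$, while $\T$-invariance combined with continuity gives
\[
\Phi(v_n')=\Phi(t_n\cdot v_n')\longrightarrow \Phi(k^{-1}\cdot w),
\]
whence $\Phi(k^{-1}\cdot w)=\Phi(v')$. By Corollary~\ref{cor_oneclosedorbit} there is a closed $\T$-orbit $\oca$ in $\overline{\T\cdot(k^{-1}\cdot w)}$; the $\T$-invariance of $\Phi$ forces $\Phi(\oca)=\Phi(v')$, and the separation property of Proposition~\ref{prop_sepclosedorbits} then yields $\oca=\T\cdot v'$. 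Hence $\T\cdot v'\subset \overline{\T\cdot(k^{-1}\cdot w)}$.

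If $\T\cdot(k^{-1}\cdot w)=\T\cdot v'$, then $k^{-1}\cdot w=s\cdot v'$ for some $s\in \T$, giving $w=ksk'\cdot\vmin\in \G\cdot\vmin$ and contradicting the choice of $w$. Therefore $v'\in \overline{\T\cdot(k^{-1}\cdot w)}\setminus \T\cdot(k^{-1}\cdot w)$, so this $\T$-orbit is not closed. The Hilbert-Mumford criterion for abelian groups (Lemma~\ref{lem_oneparamT}) then produces $\alpha\in \tg$ and $s\in \T$ with $\lim_{t\to\infty}\exp(t\alpha)\cdot(k^{-1}\cdot w)=s\cdot v'$, and Corollary~\ref{cor_limitlowernorm} gives
\[
\Vert w\Vert=\Vert k^{-1}\cdot w\Vert>\Vert s\cdot v'\Vert\geq \Vert v'\Vert=1,
\]
contradicting $\Vert w\Vert=1$ and completing the argument.

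The delicate step is the middle paragraph: one must show that $\T\cdot v'$ sits inside the closure of the $\T$-orbit through $k^{-1}\cdot w$, despite $k^{-1}\cdot w$ being obtained as $\lim_n t_n\cdot v_n'$ with a varying base point $v_n'$ and potentially unbounded $t_n$. The continuous $\T$-invariant separating functions of Section~\ref{sec:sepclosedT} are tailored precisely for this situation, letting me equate $\Phi(t_n\cdot v_n')=\Phi(v_n')$ and pass to the limit without tracking the $t_n$ directly.
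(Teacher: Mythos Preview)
Your argument from the second paragraph onward is correct and elegant: it shows that any fixed $w\in\overline{\G\cdot\vmin}\setminus\G\cdot\vmin$ must satisfy $\Vert w\Vert>1$. The problem lies entirely in your first step. You assert that compactness of $\{v\in\overline{\G\cdot\vmin}:\Vert v\Vert\leq 2\}$ yields a point $w\in\overline{\G\cdot\vmin}\setminus\G\cdot\vmin$ with $\Vert w\Vert=1$, but this does not follow. The negation of the lemma only gives a sequence $w_n\in\overline{\G\cdot\vmin}\setminus\G\cdot\vmin$ with $\Vert w_n\Vert\to 1$; a subsequential limit $w$ lies in $\overline{\G\cdot\vmin}$ with $\Vert w\Vert=1$, but nothing prevents $w\in\G\cdot\vmin$. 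In other words, you are implicitly assuming that $\G\cdot\vmin$ is locally closed (equivalently, that its frontier is closed). In the paper's generality---which explicitly includes non-rational representations, see Section~\ref{sec_examples}---this is not available a priori; in fact it is essentially what the lemma is establishing. Your pointwise inequality $\Vert w\Vert>1$ therefore does not upgrade to the required uniform bound $\Vert w\Vert\geq 1+\epsilon$.

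The paper takes a completely different, direct route that sidesteps this issue. It uses only the convexity result Lemma~\ref{lem_convex}: for $A\in\pg_{\vmin}^\perp$ with $\Vert A\Vert=1$ one has $d_{A,\vmin}''(0)=\Vert A\cdot\vmin\Vert^2\geq\delta>0$ uniformly (by compactness of the unit sphere in $\pg_{\vmin}^\perp$), and convexity then forces $d_{A,\vmin}(t)\geq(1+\epsilon)^2$ once $|t|\geq t_\delta$, uniformly in $A$. Patching these estimates over a finite cover of the compact set $\K\cdot\vmin$ produces an open neighbourhood $U$ of $\K\cdot\vmin$ in $\G\cdot\vmin$ whose closure in $V$ stays inside the orbit, and outside of which the norm is at least $1+\epsilon$. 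This simultaneously proves the uniform bound \emph{and} the local closedness near $\K\cdot\vmin$, without ever invoking the abelian separation machinery. Your approach would become valid if you first established local closedness by this convexity argument, but at that point the lemma is already proved.
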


\begin{proof}
As we will show below $\K \cdot \vmin$ admits an open, bounded
neighborhood $U$ in $\G\cdot \vmin$ such that the following holds:
the closure $\overline{U}$ of $U$
in $V$ satisfies $\overline U \subset \G\cdot \vmin$
and  there exists  $\epsilon>0$, such that for all
$v \in \G\cdot \vmin \backslash U$ we have $\Vert v \Vert \geq 1 + \epsilon$.
It follows then that any $v\in \overline{\G\cdot \vmin}\backslash \G\cdot \vmin$
satisfies $\Vert v\Vert \geq 1 +\epsilon$.

To show this claim,
let $\ggo_{\vmin}\subset \ggo$ denote the isotropy subalgebra of  $\vmin$ and $\pg^\perp_{\vmin}$
the orthogonal complement of $\ggo_{\vmin}\cap \pg$ 
in $\pg$ with respect to the given scalar product on $\ggo$. Then $\psi:\pg^\perp_{\vmin}\to \G \cdot \vmin\,;\,\,
A \mapsto \exp(A)\cdot \vmin$ is a local diffeomorphism
close to $0 \in \pg^\perp_{\vmin}$,
 such that its image intersects $\K\cdot \vmin$
transversally. Most importantly, by Lemma \ref{lem_convex} assuming
that $\Vert A\Vert=1$ we know that the  function 
$d(t)=d_{A,\vmin}(t)$ along $\exp(t\cdot A)\cdot \vmin$ satisfies
$d'(0)=0$ and $d''(0)=\Vert A \cdot \vmin\Vert^2$.
Since $A \in \pg^\perp_{\vmin}$ and $\Vert A\Vert =1$
there exists $\delta(\vmin)>0$ such that \mbox{$d''(0)\geq \delta(\vmin) >0$ }
for all such $A$.
Since $d''(t)=\Vert A \cdot \exp(t\cdot A) \cdot \vmin\Vert^2$ we deduce
furthermore,
that there exist $t_{\delta(\vmin )}>0$ and $\epsilon_{\vmin}>0$ such that 
for all $t \in \RR$ with $\vert t\vert\geq t_{\delta(\vmin)}$ 
and for all
 $A \in \pg^\perp_{\vmin}$ with $\Vert A\Vert =1$
we have $d_{A,\vmin}(t)\geq (1+\epsilon_{\vmin})^2$.

We consider now the  map $\Psi:\pg^\perp_{\vmin}\times U_{\vmin}^{\K\cdot \vmin}\to \G \cdot \vmin\,;\,\,(A,v) \mapsto \exp(A)\cdot v$ for an open neighbourhood
$U_{\vmin}^{\K\cdot \vmin}$ of $\vmin$ in $\K\cdot \vmin$. Again,
we may assume that $\Psi$ is a locall diffeomorphism from
$B_{t_\delta(\vmin)}(0) \times U_{\vmin}^{\K\cdot \vmin}$
to its image $U_{\vmin} \subset \G\cdot v$. Precisely as above, we deduce 
that there exist $t_{\delta(\vmin )}>0$ and $\epsilon_{\vmin}>0$ such that 
for all $t \in \RR\backslash (-t_{\delta(\vmin)},t_{\delta(\vmin)})$ and for all
 $A \in \pg^\perp_{\vmin}$ with $\Vert A\Vert =1$
 and for all $v \in U_{\vmin}^{\K\cdot \vmin}$ 
we have that $d_{A,v}(t)\geq (1+\epsilon_{\vmin})^2$.

Recall  that $\K \cdot \vmin \subset \mca$, since $\K$ acts isometrically. 
Since $\K\cdot \vmin$ is compact, there exist finitely many
 such open neighbourhoods 
$U_{k_1\cdot \vmin}, \ldots ,U_{k_N\cdot \vmin}$, such that 
$U=\cup_{i=1}^N U_{k_i\cdot \vmin}$ is an open neighborhood of $\K\cdot \vmin$, $k_1=e,\ldots ,k_N \in \K$.
We may of course assume that
each of the open subsets $U_{k_i\cdot \vmin}^{\K\cdot \vmin}$ contains
a compact subset $A_{k_i\cdot \vmin}^{\K\cdot \vmin}$, $i=1, \ldots ,N$,
such that the interior of these sets still cover $\K \cdot \vmin$, that is
 $\K\cdot \vmin\subset U^\K
 :=\cup_{i=1}^N (A_{k_i\cdot \vmin}^{\K\cdot \vmin})^o$.
This then shows the above claim.
\end{proof}

In the next lemma we show that a non-closed orbit has a closed, $G$-invariant subset in its closure, intersecting the orbit trivially. 

\begin{lemma}\label{lem_niceclosurepoint}
Let $v\in V$ and suppose that the orbit $\G\cdot v$ is not closed. Then
there exists $\bar v \in  \overline{\G \cdot v} \backslash \G\cdot v$, such that
$Y=\overline{G\cdot \bar v}$ satisfies $Y \cap \G \cdot v =\emptyset$.
\end{lemma}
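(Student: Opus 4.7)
The plan is to take $\bar v$ to be a point of minimum norm in $F:=\overline{\G\cdot v}\setminus \G\cdot v$, and to derive a contradiction from Lemma~\ref{lem_minlocclosed} if the orbit closure $Y:=\overline{\G\cdot\bar v}$ happens to meet $\G\cdot v$.

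First I would observe that $F$ is closed in $V$, which uses the local closedness of the orbit $\G\cdot v$ in $V$, readable off from the local structure at minimal vectors used in the proof of Lemma~\ref{lem_minlocclosed}. Granted this, since the norm function is proper the infimum $r:=\inf_{w\in F}\Vert w\Vert$ is attained at some $\bar v\in F$. If $\bar v=0$, then $Y=\{0\}$ is disjoint from $\G\cdot v$ because $v\neq 0$ (otherwise $\G\cdot v$ would be closed), and the conclusion holds trivially. Hence we may assume $\bar v\neq 0$.

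Next I would argue by contradiction: assume $\G\cdot v\cap Y\neq\emptyset$. By $\G$-invariance of $Y$ this forces $v\in Y$, and combining with the obvious inclusion $Y\subset\overline{\G\cdot v}$ (valid since $\bar v\in\overline{\G\cdot v}$) yields $Y=\overline{\G\cdot v}$. Since $F$ is $\G$-invariant, we have $\G\cdot\bar v\subset F$, and as $\bar v$ minimizes the norm on $F$ it also does so on $\G\cdot\bar v$; in particular $\bar v\in\mca$. Moreover $\G\cdot\bar v$ is not closed, because $v\in Y\setminus \G\cdot\bar v$ (the two orbits being disjoint in view of $\bar v\notin \G\cdot v$).

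The contradiction then comes from Lemma~\ref{lem_minlocclosed} applied to the unit-norm minimal vector $\bar v/\Vert\bar v\Vert$ and rescaled: there exists $\epsilon>0$ with $\Vert w\Vert\geq (1+\epsilon)\Vert\bar v\Vert$ for every $w\in Y\setminus\G\cdot\bar v$. In particular $\G\cdot v\subset Y\setminus\G\cdot\bar v$ lies entirely outside the ball of radius $(1+\epsilon)\Vert\bar v\Vert$. But since $\bar v\in\overline{\G\cdot v}$, there is a sequence $g_n\in\G$ with $g_n\cdot v\to\bar v$, so $\Vert\bar v\Vert=\lim_n\Vert g_n\cdot v\Vert\geq (1+\epsilon)\Vert\bar v\Vert$, contradicting $\bar v\neq 0$. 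The main obstacle I anticipate is justifying that $F$ is closed (equivalently, the local closedness of $\G\cdot v$ in $V$); once this topological input is in place, the remainder is a direct extraction from Lemma~\ref{lem_minlocclosed}.
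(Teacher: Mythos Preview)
Your contradiction argument---once you have a point $\bar v \in F \cap \mca$---is correct and coincides with the paper's second case. The gap you flag, however, is real and cannot be filled as you suggest. The local structure exhibited in the proof of Lemma~\ref{lem_minlocclosed} is only available for orbits that \emph{contain} a minimal vector; if $\G\cdot v$ does not contain one, that lemma says nothing about $\G\cdot v$, and local closedness of $\G\cdot v$ (equivalently, closedness of $F$) is simply not established at this stage of the paper.

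The paper circumvents this by minimizing the norm over the closed set $Z=\overline{\G\cdot v}$ instead of over $F$, and then splitting into two cases. If the minimizer $v_{\min}$ lies in $\G\cdot v$, Lemma~\ref{lem_minlocclosed} applies to $\G\cdot v$ and gives $\Vert w\Vert\geq\Vert v_{\min}\Vert+\epsilon$ for every $w\in F$; since $\G\cdot\bar v\subset F$ for any $\bar v\in F$, the same lower bound holds on all of $Y=\overline{\G\cdot\bar v}$, which therefore cannot contain $v_{\min}$ and hence is disjoint from $\G\cdot v$. If instead $v_{\min}\in F$, one takes $\bar v=v_{\min}$, which lies in $\mca$ automatically, and your contradiction argument goes through verbatim. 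Thus the fix is simply to replace ``minimize over $F$'' by ``minimize over $Z$'' and add the short first case.
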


\begin{proof}
Notice first that the set  $Z:=\overline{\G \cdot v}$ is closed
and $\G$-invariant, hence it contains
a minimal vector $\vmin \in \mca$.
 Since by assumption $\G\cdot v$ is not closed, there are now two cases to be considered:  $\vmin \in \G\cdot v$ and $\vmin \in Z \backslash \G\cdot v$.
 
 If $\vmin \in \G\cdot v$ by Lemma \ref{lem_minlocclosed} there
 exists $\epsilon>0$ such that for any $\bar v \in Z\backslash \G\cdot v$ 
 we have $\Vert \bar v\Vert \geq \Vert \vmin\Vert + \epsilon$.
 In particular, the same estimate holds true on
 $Y=\overline{\G\cdot \bar v}$.  Since $\G\cdot v \cap Y$ is $\G$-invariant,
 this intersection must be empty, since otherwise it would contain $\G \cdot v$,
 hence $\vmin$ contradicting the above estimate.
 
 In case  $\vmin \in Z \backslash \G\cdot v$, 
 we set $Y:= \overline{\G\cdot \vmin}$.
  Again, by Lemma \ref{lem_minlocclosed} any element in 
  $\overline{\G\cdot \vmin}\backslash \G\cdot  \vmin$ must 
  satisfy $\Vert v\Vert \geq \Vert \vmin\Vert + \epsilon$ for some 
  $\epsilon>0$. Thus $\G\cdot v \cap Y=\emptyset$,
  since $\vmin \in \overline{\G\cdot v}$ implies that  $\G\cdot v$ must contain
  vectors of norm $ \Vert \vmin\Vert + \epsilon/2$.
\end{proof}

We can now provide a proof of the Hilbert-Mumford criterion in this setting. The following argument is due to  Richardson (see also \cite[Thm.~ 5.2]{Birkes}).

\begin{lemma}[Hilbert-Mumford criterion for real reductive groups]\label{lem_oneparamG}
Let $v\in V$. If the orbit $\G\cdot v$ is not closed, then for some $\alpha\in \pg$ the limit
 $ \lim_{t\to\infty} \exp(t \alpha) \cdot v$ exists.
\end{lemma}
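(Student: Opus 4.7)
The plan is to use the decomposition $\G = \K \T \K$ (Corollary \ref{cor_KTK}) to reduce the problem to a weight-space analysis of the abelian $\T$-action as in Section \ref{sec_torusactions}. Since $\G \cdot v$ is not closed, pick any $\bar v \in \overline{\G \cdot v} \setminus \G \cdot v$ and a sequence $(g_n) \subset \G$ with $g_n \cdot v \to \bar v$. Write $g_n = k_n a_n \ell_n$ with $k_n, \ell_n \in \K$ and $a_n = \exp(\lambda_n) \in \T$. By compactness of $\K$, after passing to a subsequence we may assume $k_n \to k$ and $\ell_n \to \ell$ in $\K$. Setting $w := \ell \cdot v$ and $y := k^{-1} \cdot \bar v$, this gives $\ell_n \cdot v \to w$ and $a_n \cdot (\ell_n \cdot v) \to y$.

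Next I would show that $\Vert \lambda_n \Vert \to \infty$. If not, a convergent subsequence $\lambda_n \to \lambda \in \tg$ would give $a_n \cdot (\ell_n \cdot v) \to \exp(\lambda) \cdot w$ by continuity, so $y = \exp(\lambda) \cdot w$ and hence $\bar v = k \exp(\lambda) \ell \cdot v \in \G \cdot v$, a contradiction. After a further subsequence, set $\mu := \lim_{n\to\infty} \lambda_n / \Vert \lambda_n \Vert$, which lies in $\tg \subset \pg$ and has unit norm.

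The main step is now a weight-space analysis. Using the simultaneous eigendecomposition of the $\T$-action from Section \ref{sec_torusactions} with weights $\alpha_1, \ldots, \alpha_N$ and basis $e_1, \ldots, e_N$, for any index $i$ with $\la \mu, \alpha_i \ra > 0$ we have $\la \lambda_n, \alpha_i \ra \to +\infty$, hence $e^{\la \lambda_n, \alpha_i \ra} \to +\infty$. Since $(\ell_n \cdot v)_i \to w_i$ and $e^{\la \lambda_n, \alpha_i \ra} (\ell_n \cdot v)_i \to y_i$ by \eqref{eqn_Taction}, this forces $w_i = 0$. Therefore $w$ is supported on those indices $i$ with $\la \mu, \alpha_i \ra \leq 0$, and \eqref{eqn_Taction} yields
\[
    \exp(t\mu) \cdot w = \sum_{\la \mu, \alpha_i\ra \leq 0} e^{t \la \mu, \alpha_i\ra} w_i e_i \longrightarrow \sum_{\la \mu, \alpha_i\ra = 0} w_i e_i \quad \text{as } t \to +\infty.
\]

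To finish, set $\alpha := \Ad(\ell^{-1}) \mu$. Since $\ell \in \K$ and the Cartan decomposition $\ggo = \kg \oplus \pg$ is $\Ad(\K)$-invariant, $\alpha \in \pg$. Using $\exp(t\alpha) = \ell^{-1} \exp(t\mu) \ell$ we get $\exp(t\alpha) \cdot v = \ell^{-1} \cdot (\exp(t\mu) \cdot w)$, whose limit as $t \to +\infty$ exists by the previous display. The main potential obstacle is the bookkeeping relating $v$, $w = \ell \cdot v$, and $y$, together with the conjugation by $\ell$ at the end; beyond that, the argument is a clean reduction via $\K\T\K$ together with the compactness of the unit sphere in $\tg$ used to extract the divergence direction $\mu$.
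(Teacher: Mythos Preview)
There is a genuine gap: the direction $\mu$ you extract may fix $w$, so that $\exp(t\mu)\cdot w \equiv w$ and your limit is just $v$ itself. Your argument shows $w_i = 0$ whenever $\la\mu,\alpha_i\ra > 0$, but to get a \emph{nontrivial} limit you would need some index in the support of $w$ with $\la\mu,\alpha_i\ra < 0$, and nothing forces this. The point is that $\mu$ only captures the dominant direction of $\lambda_n$, while the actual escape from the orbit may be driven by a subdominant component of $\lambda_n$ that is invisible in $\mu$.

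Here is a concrete failure of your scheme. Take $\G=\Gl_3(\RR)$ acting on $V=\RR^3$ in the standard way, $v=e_1$, $\bar v=0$. Let $\ell_n\in\SO(3)$ rotate $e_1$ toward $e_2$ by angle $1/n$, set $k_n=\Id$, and $a_n=\exp(\lambda_n)$ with $\lambda_n=(-\log n,\,-n,\,0)\in\tg$. Then $g_n=a_n\ell_n$ satisfies
\[
g_n\cdot e_1=\tfrac{1}{n}\cos\tfrac1n\,e_1+e^{-n}\sin\tfrac1n\,e_2\longrightarrow 0,
\]
so the sequence genuinely leaves $\G\cdot v$. But $\ell_n\to\Id$ gives $w=e_1$, while $\lambda_n/\Vert\lambda_n\Vert\to\mu=(0,-1,0)$, and $\la\mu,\alpha_1\ra=0$, so $\exp(t\mu)\cdot w=e_1$ for all $t$. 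Your $\alpha=\mu$ yields the trivial limit $v$. (Permuting via the Weyl group to other $\K\T\K$ decompositions of $g_n$ does not help: the same phenomenon persists.) The difficulty is that you are running the weight analysis of Lemma~\ref{lem_oneparamT} on a \emph{moving} vector $\ell_n\cdot v$; the coordinates of $\ell_n\cdot v$ that tend to zero can interact with the unbounded $e^{\la\lambda_n,\alpha_i\ra}$ in ways your first-order limit $\mu$ does not detect.

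The paper avoids this by a different mechanism: it first uses Lemma~\ref{lem_niceclosurepoint} to pick $\bar v$ with $\overline{\G\cdot\bar v}\cap\G\cdot v=\emptyset$, and then argues by contradiction using the separation of closed $\T$-invariant sets (Corollary~\ref{cor_sepclosedTinvsets}). If no $\T$-one-parameter subgroup carried any $k\cdot v$ into $\overline{\G\cdot\bar v}$, one could separate $\overline{\T\cdot k\cdot v}$ from $\overline{\G\cdot\bar v}$ by $\T$-invariant functions for each $k$, patch finitely many of them by compactness of $\K$, and contradict $\bar v\in\overline{\K\T\K\cdot v}$. That argument produces a limit lying in $\G\cdot\bar v$, hence automatically outside $\G\cdot v$.
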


\begin{proof}
Let $\T = \exp(\tg)\subset \G$ be a maximal abelian subalgebra, $\tg \subset \pg$ , and choose  $\bar v \in \overline{\G \cdot v} \backslash \G\cdot v$ such that
$Y:=\overline{\G \cdot \bar v}$ satisfies
$Y \cap \G \cdot v=\emptyset$: see \mbox{Lemma \ref{lem_niceclosurepoint}}.
We will show below that then there exists 
$g \in  \G$, $k\in \K$ and $\alpha \in \tg$ such that $g \cdot \bar v = \lim_{t\to\infty} \exp(t \alpha) \cdot (k\cdot v)$. Notice that the lemma follows, 
since for $\alpha' = k^{-1} \, \alpha \, k$, $\bar v ' = k^{-1} \, \cdot g \cdot \bar v$ 
we deduce $\bar v' = \lim_{t\to\infty} \exp(t \alpha') \cdot v$. 

To prove the above claim, suppose on the contrary that $Y \cap \overline{\T \cdot k \cdot v} = \emptyset$ for all \mbox{$k\in \K$}. Since $Y$ is closed and $\T$-invariant,
by Corollary \ref{cor_sepclosedTinvsets}, for each $k\in \K$ there exists a continuous $\T$-invariant function $f_k : V \to \RR$ 
with $f_k\big(\overline{\T\cdot (k\cdot v)}\big) = 1$ and
$ f_k\big(Y) \equiv 0$. 
By continuity, each $k$ has an open neighborhood $U_k$ in $\K$ such that $f_k(\T \cdot U_k \cdot v) > 1/2$. Since $\K$ is compact, we may extract a finite number of such functions $f_{k_1}, \ldots, f_{k_R}$ such that for $f = f_{k_1} + \cdots + f_{k_R}$ we have that 
$f\big((\T\K) \cdot v\big) > 1/2$ and $ f\big( Y) \equiv 0$.
 Since $K\cdot \bar v \subset Y$, we deduce
 $\overline{\T\K \cdot v} \cap \K \cdot \bar v = \emptyset$, thus $\bar v \notin \K \left(\overline{\T\K \cdot v} \right)$.
Using $\G =\K \T \K$ and $\overline{\G\cdot v} \subset
 \K \left(\overline{\T\K \cdot v} \right)$, we obtain $\bar v \not\in
 \overline{\G\cdot v}$, a contradiction. To see why one has $\overline{\G\cdot v} \subset
 \K \left(\overline{\T\K \cdot v} \right)$, observe that if $w = \lim_{i\to\infty} g_i \cdot v$ with $g_i = k_i t_i k'_i \in \G = \K \T \K$, by compactness of $\K$ one may assume that $k_i \to k_\infty$ and hence $w = k_\infty  \cdot \lim_{i\to\infty} t_i k'_i \cdot v \in \K \,  (\overline{T K \cdot v})$.
\end{proof}

\begin{corollary}\label{cor_closediffminimal}
Let $v\in V$. Then, the orbit $\G\cdot v$ is closed if and only if there exists $v_m \in \G\cdot v$ with $\Vert v_m\Vert \leq \Vert g \cdot v_m\Vert$ for all $g\in \G$.
\end{corollary}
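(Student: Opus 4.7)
For the forward direction, suppose $\G \cdot v$ is closed. The plan is to simply observe that the norm function attains its infimum on the closed orbit. Let $m := \inf_{g \in \G} \Vert g \cdot v \Vert \geq 0$ and pick a minimizing sequence $g_n \cdot v$ with $\Vert g_n \cdot v \Vert \to m$. This sequence is bounded, so some subsequence converges to a limit $v_m \in V$ with $\Vert v_m \Vert = m$. Since $\G \cdot v$ is closed, $v_m \in \G \cdot v$. By construction $\Vert v_m \Vert = m \leq \Vert g \cdot v_m \Vert$ for every $g \in \G$, so $v_m$ is the desired minimal vector.

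For the converse, I would argue by contradiction. Assume a minimal vector $v_m \in \G \cdot v$ exists but that the orbit $\G \cdot v = \G \cdot v_m$ is not closed. Apply the Hilbert-Mumford criterion (Lemma \ref{lem_oneparamG}) directly to $v_m$: this produces $\alpha \in \pg$ such that the limit $w := \lim_{t \to \infty} \exp(t\alpha) \cdot v_m$ exists. Inspecting the proof of Lemma \ref{lem_oneparamG}, the limit $w$ is obtained as $g \cdot \bar v$ with $\bar v \in \overline{\G \cdot v_m} \setminus \G \cdot v_m$ (via Lemma \ref{lem_niceclosurepoint}), hence $w \notin \G \cdot v_m$ and in particular $w \neq v_m$.

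Now I would extract the contradiction from the convexity of the distance function. On the one hand, by Corollary \ref{cor_limitlowernorm} applied with $A = \alpha$, the strict inequality $\Vert \exp(t\alpha) \cdot v_m \Vert > \Vert w \Vert$ holds for all $t \in \RR$; evaluating at $t = 0$ gives $\Vert v_m \Vert > \Vert w \Vert$. On the other hand, the minimality of $v_m$ gives $\Vert v_m \Vert \leq \Vert \exp(t\alpha) \cdot v_m \Vert$ for every $t$, so passing to the limit $t \to \infty$ yields $\Vert v_m \Vert \leq \Vert w \Vert$. This is a contradiction, so $\G \cdot v$ must be closed.

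The main obstacle, if any, is just bookkeeping: one must be careful to invoke Lemma \ref{lem_oneparamG} at $v_m$ itself (so that the minimality hypothesis applies along the one-parameter subgroup) and to make sure the limit point $w$ lies strictly outside the orbit, so that Corollary \ref{cor_limitlowernorm} gives a \emph{strict} norm inequality. Everything else is a direct combination of previously established results; no new analytic or algebraic ingredients are required.
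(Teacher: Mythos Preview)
Your proof is correct and follows essentially the same approach as the paper's: the forward direction is the obvious compactness argument, and for the converse both you and the paper combine Lemma \ref{lem_oneparamG} (applied at $v_m$) with Corollary \ref{cor_limitlowernorm} to derive the contradiction $\Vert v_m\Vert > \Vert w\Vert \geq \Vert v_m\Vert$. Your version is simply more explicit about why the limit $w$ differs from $v_m$ (by tracing through the proof of Lemma \ref{lem_oneparamG} and Lemma \ref{lem_niceclosurepoint}), whereas the paper leaves this implicit.
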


\begin{proof}
If the orbit is closed then the existence of $v_m$ is clear. Conversely, assume that there exists a minimal vector $v_m$ but $\G \cdot v_m$ is not closed. Notice that by continuity we also have that $ \Vert v_m\Vert \leq \Vert \bar v\Vert$ for all $\bar v \in \overline{ \G\cdot v_m}$. Lemma \ref{lem_oneparamG} together with Corollary \ref{cor_limitlowernorm} give a contradiction.
\end{proof}

\begin{lemma}\label{lem_uniqclosedorbit}
Any orbit $\G\cdot v$ contains exactly one closed orbit in its closure.
\end{lemma}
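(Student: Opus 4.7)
The proof splits naturally into existence and uniqueness.

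\emph{Existence.} If $\G\cdot v$ is already closed, there is nothing to prove. Otherwise, Lemma \ref{lem_oneparamG} yields $\alpha\in\pg$ with $w:=\lim_{t\to\infty}\exp(t\alpha)\cdot v\in\overline{\G\cdot v}\setminus\G\cdot v$. Since $\alpha$ lies in the isotropy Lie algebra of $w$ but not of $v$ (otherwise $\exp(t\alpha)\cdot v=v$ for all $t$, forcing $w=v\in\G\cdot v$), one has $\dim\G\cdot w<\dim\G\cdot v$. As $\G\cdot w\subset\overline{\G\cdot v}$, iterating this descent produces a strictly decreasing chain of orbit dimensions in $\overline{\G\cdot v}$, which must terminate at a closed orbit.

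\emph{Uniqueness.} Suppose for contradiction that $\oca_1\neq\oca_2\subset\overline{\G\cdot v}$ are two distinct closed $\G$-orbits. Being $\G$-invariant closed subsets of $V$, they are disjoint closed $\T$-invariant sets. My plan is to use the $\K\T\K$ decomposition (Corollary \ref{cor_KTK}) and compactness of $\K$ to reduce matters to the abelian separation theory of Section \ref{sec:sepclosedT}. Pick minimal vectors $v_i\in\oca_i\cap\mca$ (available by Corollary \ref{cor_closediffminimal}); since $v_i$ is $\G$-minimal it is a fortiori $\T$-minimal, so $\T\cdot v_i$ is closed in $V$. Approximate $v_i=\lim_n g_n^{(i)}\cdot v$, decompose $g_n^{(i)}=k_n^{(i)}t_n^{(i)}\tilde k_n^{(i)}$ and, by compactness of $\K$, extract subsequences such that $k_n^{(i)}\to k^{(i)}$ and $\tilde k_n^{(i)}\to\tilde k^{(i)}$. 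Setting $\hat v_i:=(k^{(i)})^{-1}\cdot v_i\in\oca_i\cap\mca$ and $w_i:=\tilde k^{(i)}\cdot v\in\K\cdot v$, one obtains $t_n^{(i)}\cdot(\tilde k_n^{(i)}\cdot v)\to\hat v_i$ while $\tilde k_n^{(i)}\cdot v\to w_i$.

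The key technical step is to show $\hat v_i\in\overline{\T\cdot w_i}$. Otherwise $\T\cdot\hat v_i$ (closed by $\T$-minimality of $\hat v_i$) and $\overline{\T\cdot w_i}$ would be disjoint closed $\T$-invariant sets, so Corollary \ref{cor_sepclosedTinvsets} would produce a continuous $\T$-invariant $h$ with $h(\hat v_i)\neq h(w_i)$; but the $\T$-invariance identity $h(\tilde k_n^{(i)}\cdot v)=h(t_n^{(i)}\cdot\tilde k_n^{(i)}\cdot v)$ combined with continuity forces the two limits $h(w_i)$ and $h(\hat v_i)$ to coincide, a contradiction. Since every $\T$-invariant continuous function is constant on the connected set $\overline{\T\cdot w_i}$, the closure $\overline{\T\cdot w_i}$ can contain at most one closed $\T$-orbit (Proposition \ref{prop_sepclosedorbits}), and this unique orbit must therefore equal $\T\cdot\hat v_i\subset\oca_i$. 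The main obstacle I anticipate is concluding from this for both $i=1,2$ simultaneously, since $\tilde k^{(1)}$ and $\tilde k^{(2)}$ need not coincide. To overcome this I plan to iterate the construction by replacing $v$ with $\tilde k^{(1)}\cdot v\in\K\cdot v$ (which preserves $\overline{\G\cdot v}$) so that $\overline{\T\cdot v}$ already meets $\oca_1$, and then to exploit continuity of the separating map $\K\to\RR^L$, $k\mapsto\Phi(k\cdot v)$ of Proposition \ref{prop_sepclosedorbits}, combined with the fact that its value $\Phi(k\cdot v)$ labels the unique closed $\T$-orbit in $\overline{\T\cdot k\cdot v}$, to arrive at the contradiction that a single $\T$-orbit closure must meet both $\oca_1$ and $\oca_2$.
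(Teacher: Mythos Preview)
Your existence argument via dimension descent is fine in spirit, though note that Lemma~\ref{lem_oneparamG} as stated only asserts that the limit exists, not that it lies outside $\G\cdot v$; you are implicitly using the stronger statement proved there via Lemma~\ref{lem_niceclosurepoint}. (The paper itself handles existence more directly: take a vector of minimal norm in the closed set $\overline{\G\cdot v}$ and invoke Corollary~\ref{cor_closediffminimal}.)

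The uniqueness argument has a genuine gap at the final step. Your steps through ``$\hat v_i\in\overline{\T\cdot w_i}$'' are correct and yield two points $w_1,w_2\in\K\cdot v$ with $\overline{\T\cdot w_i}\cap\oca_i\neq\emptyset$. But the proposed closing argument --- using continuity of $k\mapsto\Phi(k\cdot v)$ to force a single $\T$-orbit closure to meet both $\oca_1$ and $\oca_2$ --- does not go through. The sets $A_i:=\{k\in\K:\Phi(k\cdot v)\in\Phi(\oca_i)\}$ are closed, disjoint, and nonempty, but there is no reason for $A_1\cup A_2=\K$: for a generic $k$, the unique closed $\T$-orbit in $\overline{\T\cdot k\cdot v}$ lies somewhere in $\overline{\G\cdot v}$ but need not lie in either $\oca_1$ or $\oca_2$. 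So continuity (even granting $\K$ connected, which is not assumed) yields no contradiction.

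What the paper does differently is decisive: instead of varying $k$ over $\K\cdot v$, it takes a sequence $w_i\in\G\cdot v$ converging to a point of $\oca_2$, and applies the claim inside the proof of Lemma~\ref{lem_oneparamG} to each $w_i$ to obtain $k_i\in\K$ with $\overline{\T\cdot(k_i\cdot w_i)}\cap\oca_1\neq\emptyset$. Then a $\T$-invariant separating function $f$ with $f|_{\oca_1}\equiv1$, $f|_{\oca_2}\equiv0$ gives $f(k_i\cdot w_i)=1$ for all $i$, while $k_i\cdot w_i$ subconverges into $\K\cdot\bar w\subset\oca_2$, forcing $f(k_i\cdot w_i)\to 0$. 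The point is that the $\K$-twist is applied to the \emph{approximating sequence} rather than to $v$ itself, so that one simultaneously has $\T$-closure meeting $\oca_1$ and ambient convergence into $\oca_2$.
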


\begin{proof}
Suppose that $\bar v \in \overline{\G\cdot v}\backslash \G \cdot v$ 
has minimal norm. Then, by Corollary \ref{cor_closediffminimal} the orbit $\G\cdot \bar v \subset \overline{\G\cdot v}$ is closed.  Suppose furthermore, 
that there is second closed orbit $\G\cdot \bar w \subset \overline{\G\cdot v}\backslash \G\cdot \bar v$.
 Then, by Corollary \ref{cor_sepclosedTinvsets} there exists a continuous, $\T$-invariant function  $f:V \to [0,1]$ with $f(\G\cdot \bar v)=1$  and $f(\G \cdot \bar w)=0$.
Let now $(w_i)_{i\in \NN} \subset \G\cdot v$ 
be a sequence with $\lim_{i \to \infty} w_i = \bar w$. By the claim in the proof of Lemma \ref{lem_oneparamG}, for each $i$ there exist $g_i \in \G$, $k_i \in \K$ and $\alpha_i \in \tg$ such that $\lim_{t\to\infty} \exp(t \alpha_i) \cdot (k_i \cdot w_i) = g_i \cdot \bar v$. Since $f$ is $\T$-invariant and continuous,
we deduce $f(k_i \cdot w_i)=1$ for all $i \in \NN$. On the other hand side,
the sequence $(k_i \cdot w_i)$ subconverges to a vector in $\G\cdot \bar w$,
hence $f(k_i \cdot w_i) \to 0$ along that subsequence. Contradiction.
\end{proof}

\begin{proof}[Proof of Theorem \ref{thm_realGIT}] For (i):  if $v$ is a minimal vector then by Lemma \ref{lem_convex} and the decomposition $\G = \K \T \K$
 the set of minimal vectors in the closed orbit $\G\cdot v$ is precisely $\K \cdot v$. 
For (ii): let $\G\cdot v$ be a non-closed orbit, and pick $\bar v \in \overline{\G\cdot v}$ of minimal norm. Then, by Corollary \ref{cor_closediffminimal}  the orbit $\G \cdot \bar v$ is closed. From the proof of Lemma \ref{lem_oneparamG} we know that there exists a one-parameter subgroup such that $\lim_{t\to\infty}\exp(t\alpha') \cdot v 
\in \G \cdot \bar v$.
The third item is precisely Lemma \ref{lem_uniqclosedorbit}. 
For (iv):  Let $(v_i) \subset V$ be a sequence with $0 \in \overline{\G\cdot v_i}$ for all $i$ such that $v_i \to v_\infty$. By (ii), it follows that there exists maximal abelian subalgebras $\tg_i \subset \pg$ such that $0\in \overline{\T_i\cdot v_i}$, where $\T_i := \exp(\tg_i)$. Since by 
Proposition \ref{prop_conjugate}  all such $\T_i$ are conjugate by elements in $\K$, 
we may assume (after possibly changing $v_\infty$ by $k\cdot v_\infty$, $k\in \K$) that $\T_i \equiv \T$ is constant. The result now follows from Corollary \ref{cor_nullconeclosedT}.
\end{proof}

\section{Stratification}\label{sec_strat}

In this section we provide a proof for Theorem \ref{thm_stratif}.
This will be done by using the energy map
$\normmm(v)=\Vert \mmm(v)\Vert^2$ associated to the moment map $\mmm:V\backslash \{0\} \to\pg$ (see \eqref{eqn_defmm}) as a Morse function. The map $\normmm$  has the following remarkable property: 
its critical points are mapped under the moment map onto finitely many $\K$-orbits 
$\K\cdot \Beta_1, \ldots ,K\cdot \Beta_N$ in $\pg$ (see Lemma \ref{lem_finiteness}). We set $\bca :=\{\Beta_1, \ldots ,\Beta_N\}$,
and for $\Beta \in \bca$ we let $\cca_\Beta$ denote the set of critical points of
$\normmm$ with $\mmm(\cca_\Beta) \subset \K\cdot \Beta$. It will turn out that the stratum
$\sca_\Beta \subset V\backslash \{0\}$ is the unstable manifold corresponding to $\cca_\Beta$.

In order to briefly describe how the strata are constructed, let us fix $\Beta \in \bca$ and let $\Gb$ denote the centralizer of $\Beta$ in $\G$. It turns out that critical points $v_C$ of $\normmm$ with $\mmm(v_C) = \Beta$ correspond to minimal vectors for the action of a real reductive subgroup $\Hb \subset \Gb$ with Lie algebra $\hg_\Beta=\Beta^\perp$ on a certain subspace $\Vzero \subset V$. This makes it possible to apply Theorem \ref{thm_realGIT} for the restricted action. Inspired by the negative directions of the Hessian of $\normmm$ at $v_C$ (Lemma \ref{lem_Hess}) and the fact that $\normmm$ is $\K$-invariant, one defines the stratum $\sca_\Beta$ as in Definition \ref{def_Sbeta}. After proving that this is a smooth submanifold (Proposition \ref{prop_scasmooth}), it will follow that $\sca_\Beta$ is invariant under the negative gradient flow of $\normmm$.   

Along the proof it will turn out to be extremely convenient to break the $\K$-symmetry, and work with a fixed $\Beta$ as opposed to the entire $\K$-orbit $\K\cdot \Beta$. Thus, the crucial results will be proved on the \emph{slice} $\Vnn$ (see \eqref{eqn_defVnn}), and then extended to all of $\sca_\Beta$ by $\K$-invariance. This forces us to work with a certain \emph{parabolic subgroup} $\Qb$ of $\G$ associated to $\Beta$ (Definition \ref{def_groups}) which preserves the subspace $\Vnn$. Well-known properties of $\Qb$ and of other subgroups of $\G$ adapted to $\Beta$ will be needed along this section. They will be proved in Appendix \ref{app_groups}.

Recall that $\G\subset \Gl(V)$ is a closed subgroup satisfying \eqref{eqn_Cartandec} (see also Appendix \ref{app_reductive}). In particular, 
$\ggo \subset \glg(V)$ is a Lie subalgebra, with the property that $A^t \in \ggo$ for all $A\in \ggo$. Recall also that $\ggo$ has a Cartan decomposition $\ggo = \kg \oplus \pg$, 
where $\kg\subset \sog(V)$ and $\pg \subset \Sym(V)$.

\begin{notation}
For $\Beta \in \pg$ we set $\Beta^+ := \Beta - \Vert \Beta \Vert^2 \cdot \Id_V \in \Sym(V)$.
\end{notation}

This notation appears naturally in the formula for the gradient of $\normmm$:

\begin{lemma}\label{lem_gradient} 
The gradient of the energy map $\normmm : V\zero \to \RR$ 
is given by
\[
    (\nabla \normmm)_v  
    = \tfrac4{\Vert{v}\Vert^2} \cdot {\mmm(v)^+ } \cdot v \,.
\]
\end{lemma}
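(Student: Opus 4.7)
The plan is to compute the differential of $\mmm$ directly from its defining identity and then identify the gradient of $\normmm = \langle \mmm(\cdot), \mmm(\cdot)\rangle$ via the chain rule.

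First I would differentiate the defining relation
\[
    \Vert v\Vert^2 \cdot \la \mmm(v), A\ra = \la A \cdot v, v\ra \qquad (A\in \pg)
\]
in an arbitrary direction $w\in V$. Using $\ddt|_{t=0}\Vert v+tw\Vert^2 = 2\la v,w\ra$ on the left, and $\la A\cdot w, v\ra + \la A\cdot v, w\ra = 2\la A\cdot v, w\ra$ (since $A=A^t$) on the right, this yields
\[
    \Vert v\Vert^2 \cdot \la \dif \mmm_v(w), A\ra = 2 \la A\cdot v, w\ra - 2\la \mmm(v), A\ra \, \la v, w\ra\,,
\]
valid for all $A\in \pg$.

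Next, by the chain rule,
\[
    \dif \normmm_v(w) = 2 \, \la \mmm(v), \dif \mmm_v(w)\ra\,.
\]
Since $\mmm(v)\in \pg$ we may substitute $A = \mmm(v)$ in the previous identity, obtaining
\[
    \Vert v\Vert^2 \cdot \la \mmm(v), \dif \mmm_v(w)\ra = 2 \la \mmm(v) \cdot v, w\ra - 2 \Vert \mmm(v)\Vert^2 \, \la v, w\ra\,.
\]
Combining these two displays gives
\[
    \dif \normmm_v(w) = \tfrac{4}{\Vert v\Vert^2} \bigl\la \mmm(v)\cdot v - \Vert \mmm(v)\Vert^2 \cdot v,\, w\bigr\ra = \tfrac{4}{\Vert v\Vert^2} \la \mmm(v)^+ \cdot v, w\ra\,,
\]
from which the desired formula for $(\nabla \normmm)_v$ follows by the very definition of the gradient with respect to $\ip$ on $V$.

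There is essentially no obstacle here: the computation is completely elementary and the only small subtlety is recognising that the symmetry of elements of $\pg$ is precisely what allows one to collapse $\la A\cdot w, v\ra + \la A\cdot v, w\ra$ into $2\la A\cdot v, w\ra$, and that the substitution $A=\mmm(v)$ is legitimate because $\mmm$ takes values in $\pg$. The term $-\Vert \mmm(v)\Vert^2 \cdot v$ arising from the derivative of the normalising factor $\Vert v\Vert^{-2}$ is exactly what produces the shift $\Beta^+ = \Beta - \Vert \Beta\Vert^2 \Id_V$ in the final formula.
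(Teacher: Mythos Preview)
Your proof is correct and follows essentially the same idea as the paper's: differentiate the defining identity for $\mmm$ and apply the chain rule with $A=\mmm(v)$. The only organizational difference is that the paper first restricts to directions $w\perp v$ by moving along the circular arc $v(t)=\cos(t)v+\sin(t)w$ (so that $\Vert v(t)\Vert$ is constant and the normalising factor drops out), and then recovers the radial component separately from $\tfrac1{\Vert v\Vert^2}\la \mmm(v)\cdot v,v\ra=\Vert\mmm(v)\Vert^2$; you instead differentiate along straight lines and let the derivative of $\Vert v\Vert^2$ produce the $-\Vert\mmm(v)\Vert^2 v$ term directly. Your version is arguably a touch more streamlined, but the two computations are the same in substance.
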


\begin{proof}
Since $\normmm$ is scaling-invariant,  we have
$ (\nabla \normmm)_v \perp v$ for $v \in V\zero$.
So let $w \perp v$ with $\Vert v \Vert=\Vert w \Vert$ and
set $v(t)=\cos(t)v+\sin(t)w$. Then by (\ref{eqn_defmm}) for $\Alpha \in \pg$ we have 
\begin{eqnarray}
    \big\langle (d\mmm)_{v(t)}\, v'(t), \Alpha \big\rangle 
    = \tfrac2{\Vert{v}\Vert^2} \cdot\big\la \Alpha \cdot v(t), v'(t)\big\ra\,.
    \label{def_diffmoment}
\end{eqnarray}
Thus $\langle (\nabla \normmm)_v ,w \rangle = \tfrac4{\Vert{v}\Vert^2} \cdot \la {\mmm(v)} \cdot  v, w\ra$ and the lemma follows
since again by (\ref{eqn_defmm}) we have 
$\tfrac1{\Vert{v}\Vert^2}\la {\mmm(v)}  \cdot v, v\ra \cdot v=
\Vert {\mmm(v)} \Vert^2\cdot v$.
\end{proof}

Since $\Beta\in \pg$ is a symmetric endomorphism,
we may decompose $V$ as a sum of eigenspaces $\Vr$ of $\Beta^+$ corresponding to its eigenvalues $r\in \RR$. Of major important will be  $\Vzero$, the kernel of $\Beta^+$, and the sum of the non-negative eigenspaces
\begin{equation}\label{eqn_defVnn}
   \Vnn := \bigoplus_{r \geq 0} \Vr\,.
\end{equation}
The reason is that the above mentioned Hessian is non-negative on $\Vnn$
in every critical point $v_C\in\cca_\Beta$ of $\normmm$: see Lemma \ref{lem_Hess} below.
Let us explicitly mention though, that at this point $\Beta$ is arbitrary.

Analogous to the subspaces $\Vzero$, $V_{\Beta^+}^{>0} := \bigoplus_{r>0} \Vr$ and $\Vnn$, we have certain special subgroups of $\G$. To define them, consider the symmetric endomorphism
\[
 \ad(\Beta) : \ggo \to \ggo\,\,;\,\,\, A \mapsto [\Beta, A]\,.
\]
Using the eigenspace decomposition $\ggo = \bigoplus_{r\in \RR} \ggo_r$ of $\ad(\Beta)$, 
we denote  $\ker (\ad(\Beta) )$ by $\ggo_\Beta := \ggo_0 $, set
$\ug_\Beta := \bigoplus_{r> 0} \ggo_r$ and $\qg_\Beta = \ggo_\Beta \oplus \ug_\Beta$.

\begin{definition}\label{def_groups}
We denote by $\Gb := \{ g \in \G : g \Beta g^{-1} = \Beta \}$ the centralizer of $\Beta$ in $\G$, by $\Ub := \exp(\ug_\Beta)$, and we set $\Qb := \Gb \Ub$. 
\end{definition}
It turns out that $\Gb, \Ub$ and $\Qb$ are closed subgroups of $\G$, with Lie algebras $\ggo_\Beta$, $\ug_\Beta$ and $\qg_\Beta$, respectively. We refer the reader to Appendix \ref{app_groups} for more details and properties of these groups.

\begin{lemma}\label{lem_Vnninv}
The subspace $\Vnn$ is $\Qb$-invariant. 
\end{lemma}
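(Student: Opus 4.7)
The plan is to establish $\Qb$-invariance by verifying it separately for the two factors $\Gb$ and $\Ub$ in $\Qb = \Gb \Ub$, relying on the simple algebraic observation that commutators with $\Beta$ shift eigenspaces additively.

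The central calculation I would do first is the following. Since $\Beta^+ = \Beta - \Vert\Beta\Vert^2 \Id$ differs from $\Beta$ by a scalar, the $\Beta^+$-eigenspaces $V_{\Beta^+}^r$ coincide with the $(r+\Vert\Beta\Vert^2)$-eigenspaces of $\Beta$. So for $A \in \ggo_r$ (i.e.\ $[\Beta,A]=rA$) and $v \in V_{\Beta^+}^s$ one computes
\[
\Beta \cdot (A \cdot v) = A \cdot \Beta v + [\Beta,A]\cdot v = (s + \Vert\Beta\Vert^2)\, Av + r\, Av,
\]
so $\Beta^+ (Av) = (s+r)(Av)$, meaning $\ggo_r \cdot V_{\Beta^+}^s \subseteq V_{\Beta^+}^{s+r}$. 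Taking $r \geq 0$ and $s \geq 0$ immediately gives $\qg_\Beta \cdot \Vnn \subseteq \Vnn$.

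Next I would handle each factor of $\Qb$. For $\Ub = \exp(\ug_\Beta)$: any element has the form $\exp(A)$ with $A \in \ug_\Beta \subset \qg_\Beta$, so $A$ preserves $\Vnn$ by the above, and therefore so does $\exp(A) = \sum_{k \geq 0} A^k/k!$. For $\Gb$: if $g\Beta g^{-1} = \Beta$ then $g$ commutes with $\Beta$ and hence with $\Beta^+$, so $g$ preserves every $\Beta^+$-eigenspace $V_{\Beta^+}^r$ individually, and in particular $g \cdot \Vnn \subseteq \Vnn$. Combining these two facts gives the $\Qb$-invariance of $\Vnn$.

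I expect no real obstacle in this lemma; it is essentially bookkeeping with the weight decomposition of $\ad(\Beta)$ acting on $\ggo$ and of $\Beta$ acting on $V$. The only mild subtlety is the shift between $\Beta$ and $\Beta^+$, but this shift does not change eigenspaces and drops out of the calculation. The lemma does not require connectedness of $\Gb$ because the centralizer argument works for every element of $\Gb$ directly, not just those in the identity component.
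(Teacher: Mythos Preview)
Your proof is correct and follows essentially the same route as the paper: both reduce to the weight calculation $\ggo_r \cdot V_{\Beta^+}^s \subseteq V_{\Beta^+}^{r+s}$, which is exactly the paper's equation \eqref{eqn_betapA}. The only difference is in the lift from Lie algebra to group: the paper argues via the action field $X_A$ being tangent to $\Vnn$ (so integral curves stay in $\Vnn$), whereas you treat $\Ub$ via the exponential series and $\Gb$ via the centralizer property directly---your version has the minor advantage, which you note, of handling non-connected $\Gb$ without further comment.
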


\begin{proof}
For $A\in \ggo$, if the action field $X_A(v) = A \cdot v$ is tangent to a subspace $W$ of $V$,
then the integral curves of $X_A$ starting tangent to $W$ cannot leave $W$. 
Thus it suffices to show that for all $A\in \qg_\Beta$ and $v\in \Vnn$, we have that $A \cdot v \in \Vnn$.

By linearity we may assume that $v\in \Vr$, $r\geq 0$, and that $A$ is an eigenvector of $\ad(\Beta) : \ggo \to \ggo$ with eigenvalue $\lambda_A \geq 0$. Then, 
\begin{eqnarray}
    \Beta^+ \cdot (A\cdot v) = [\Beta^+, A] \cdot v + A \cdot \Beta^+ \cdot v = \lambda_A  (A \cdot v) + r (A \cdot v),\label{eqn_betapA}
\end{eqnarray}
thus $A\cdot v \in V_{\Beta^+}^{r+\lambda_A} \subset \Vnn$ and the lemma follows.
\end{proof}

The linear orthogonal projection $p_\Beta: \Vnn \to \Vzero$ 
will be  important later on. 

\begin{lemma}\label{lem_pbeta}
The orthogonal projection $p_\Beta : \Vnn \to \Vzero$ satisfies the formula
\begin{equation}\label{eqn_pbeta}
    p_\Beta(v) = \lim_{t\to \infty} \exp(-t \Beta^+) \cdot v,
\end{equation}
it is  $\Gb$-equivariant, and for each $v\in \Vzero$ 
the fibre $p_\Beta^{-1}(v)$ is $\Ub$-invariant.
\end{lemma}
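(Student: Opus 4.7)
The three assertions can be handled independently, each reducing to eigenspace bookkeeping for $\Beta^+$ acting on $V$ and for $\ad(\Beta)$ acting on $\ggo$. For the limit formula, I would decompose $v\in \Vnn$ as $v = v_0 + \sum_{r>0} v_r$ with $v_r\in \Vr$. Since $\exp(-t\Beta^+)$ acts on $\Vr$ by the scalar $e^{-tr}$, the terms with $r>0$ decay to zero as $t\to\infty$ and only $v_0 = p_\Beta(v)$ survives in the limit.

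For $\Gb$-equivariance, the key point is that every $g\in \Gb$ commutes with $\Beta$, hence with $\Beta^+ = \Beta - \Vert \Beta\Vert^2\,\Id_V$, and therefore with $\exp(-t\Beta^+)$. In particular $g$ preserves each eigenspace $\Vr$ and so preserves $\Vnn$ and $\Vzero$. Passing to the limit $t\to\infty$ in the identity $\exp(-t\Beta^+)\,g\cdot v = g\,\exp(-t\Beta^+)\cdot v$ then yields $p_\Beta(g\cdot v) = g\cdot p_\Beta(v)$.

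For the fibre invariance, Lemma \ref{lem_Vnninv} tells me that $\Qb$, and hence $\Ub$, preserves $\Vnn$, so it makes sense to compute $p_\Beta(u\cdot w)$ for $u\in\Ub$ and $w\in\Vnn$. Since $\ug_\Beta = \bigoplus_{\lambda>0}\ggo_\lambda$ is nilpotent (only finitely many eigenvalues occur, and $[\ggo_\lambda,\ggo_\mu]\subset\ggo_{\lambda+\mu}$), every such $u$ has the form $u=\exp(A)$ with $A\in\ug_\Beta$. Using the computation \eqref{eqn_betapA}, which shows that an element of $\ggo_\lambda$ maps $\Vr$ into $V_{\Beta^+}^{r+\lambda}$, I conclude that $A\cdot w \in V_{\Beta^+}^{>0}$ for any $w\in\Vnn$, and then inductively $A^n\cdot w \in V_{\Beta^+}^{>0}$ for all $n\geq 1$. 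Hence $\exp(A)\cdot w - w \in V_{\Beta^+}^{>0}$, and applying $p_\Beta$ gives $p_\Beta(\exp(A)\cdot w) = p_\Beta(w)$.

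I do not foresee any real obstacle here; the main subtleties are distinguishing $\Beta$ from $\Beta^+$ (they share eigenspaces, but eigenvalues are shifted by $-\Vert\Beta\Vert^2$) and the action of $\Beta^+$ on $V$ from that of $\ad(\Beta)$ on $\ggo$. Once these are kept straight, all three parts collapse to systematic weight tracking.
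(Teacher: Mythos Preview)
Your proof is correct. Parts one and two coincide with the paper's argument essentially verbatim: eigenspace decomposition for the limit formula, and commutation of $\Gb$ with $\exp(-t\Beta^+)$ for equivariance.

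For the $\Ub$-invariance of fibres your route differs slightly from the paper's. The paper invokes the appendix characterization (Lemma~\ref{lem_groupslimit}) that $\lim_{t\to\infty}\exp(-t\Beta)\,u\,\exp(t\Beta)=e$ for $u\in\Ub$, then inserts $\exp(t\Beta^+)\exp(-t\Beta^+)$ into the limit defining $p_\Beta(u\cdot v)$. You instead write $u=\exp(A)$ and use the weight computation \eqref{eqn_betapA} to see that every term $A^n\cdot w$ with $n\geq 1$ lands in $V_{\Beta^+}^{>0}$, so the power series for $\exp(A)\cdot w - w$ lies in $\ker p_\Beta$. Both arguments are the same eigenvalue bookkeeping, just at the group versus Lie-algebra level; your version has the minor advantage of not needing the forward reference to the appendix, while the paper's version is marginally cleaner in that it handles $u$ directly without expanding the exponential.
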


\begin{proof}
Let $v = \sum_{r\geq 0} v_r$ with $v^r \in \Vr$.
Since the action of $\exp(-t \Beta^+)$ on $\Vr$ 
is simply given by scalar multiplication by $e^{- t r}$, we immediately obtain (\ref{eqn_pbeta}). To show the $\Gb$-equivariance, 
let $v \in \Vnn$ and $h\in \Gb$. Since
$[\Gb,\exp(\Beta^+)]=0$ we deduce
\[
    p_\Beta(h \cdot v) = \lim_{t\to\infty} \exp(-t\Beta^+) \cdot (h \cdot v) = h\cdot  \lim_{t\to\infty} \exp(-t \Beta^+) \cdot v = h \cdot p_\Beta(v)\,.
\]
To prove that the above fibre  $p_\Beta^{-1}(v)$ is $\Ub$-invariant,
recall that for $u\in \Ub$ we have
$ \lim_{ t \to \infty}\exp(-t \Beta^+) \cdot u \cdot \exp(t \Beta^+) =e$  by Lemma \ref{lem_groupslimit} .
It follows that
\[
    p_\Beta(u \cdot v) = \lim_{t\to\infty} \exp(-t \Beta^+) \cdot u \cdot \exp(t \Beta^+) \cdot \exp(-t \Beta^+) \cdot v = p_\Beta(v)\,,
\]
which shows the claim.
\end{proof}

\begin{remark}\label{rmk_pbeta}
From the proof of the previous lemma it also follows that for an arbitrary $v\in V$, the limit in \eqref{eqn_pbeta} exists if an only if $v\in \Vnn$.
\end{remark}

Before introducing the strata $\sca_\Beta$ algebraically 
we need to consider one further group
related to the $\Qb$-action on $\Vnn$. Recall that the group $\Gb$ is reductive, with Cartan decomposition given by $\Gb = \Kb \exp(\pg_\Beta)$ induced from that of $\G$: $\Kb = \Gb \cap \K$, $\pg_\Beta = \ggo_\Beta \cap \pg$. Consider the following Lie subalgebra of $\ggo_\Beta$
\[
    \hg_\Beta := \{ \Alpha \in \ggo_\Beta : \langle \Alpha  ,   \Beta \rangle = 0\}.
\]

\begin{definition}\label{def_groups2}
The subgroup  $\Hb \subset \Gb$ is defined by
\[
	\Hb = \Kb \, \exp(\pg_\Beta \cap \hg_\Beta).
\]
\end{definition}

The group $\Hb$ is real reductive, see \eqref{eqn_Cartandec}. Its Lie algebra is $\hg_\Beta$ and we have
 $\Gb = \exp(\RR \Beta) \times \Hb$ by the explicit description of $\Gb$
 given after Definition \ref{def_groupsapp}.
Moreover, it follows from (\ref{eqn_betapA}),
 that $\Hb$ acts on $\Vzero$, and that this action satisfies \eqref{eqn_Cartandec} with respect to the induced scalar product on $\Vzero$. Thus Theorem \ref{thm_realGIT} applies in this case.

\begin{definition}\label{def_Sbeta}
We call
\[
 	\Vzeross := \big\{ v \in \Vzero : 0\notin \overline{\Hb\cdot v} \big\}\,
\]
the subset of $\Hb$-\emph{semistable} vectors in $\Vzero$.
We also define accordingly 
\[
 	\Vnnss := p_\Beta^{-1} \big(\Vzeross\big).
\] 
Then, the \emph{stratum} $\sca_\Beta$ associated with the orbit $\K\cdot\Beta$ 
is the set defined by 
\[
  \sca_\Beta := \G \cdot \Vnnss\,.
\]
\end{definition}

It will be made clear afterwards that for most $\Beta \in \pg$ the
stratum $\sca_\Beta$ is actually empty. However, if
the subset $\Vzeross$  of semi-stable vectors is non-empty, then it is an
open subset of $\Vzero$ by Theorem \ref{thm_realGIT}, (iv), applied to the action of $\Hb$ on $\Vzero$. The same holds of course for $\Vnnss$ in $\Vnn$. 


\begin{remark}\label{rmk_Sscaleinv}
Notice that the strata $\sca_\Beta$ are scale-invariant. Indeed, a vector $v\in \Vzero$ is $\Hb$-semi-stable if and only if $c \cdot v$ is so, for any $c\neq 0$. Thus, $\Vzeross$ is scale invariant, and the same holds for $\Vnnss$ since $p_\Beta$ is a linear map.
\end{remark}

A second observation is that for  a critical point $v_C$ of $\normmm$
we have $v_C \in \Vzeross \subset \sca_\Beta$, where $\Beta := \mmm(v_C)$. 
This also follows from Theorem \ref{thm_realGIT} applied to the action of $\Hb$ on $\Vzero$, since the following lemmas will imply that 
 the moment map 
\[
  \mmm_{\Hb} : \Vzero\backslash\{ 0\} \to \pg\cap \hg_\Beta
\] 
for this action, which a priori is given by the orthogonal projection of $\mmm(v)$ to $\hg_\Beta$, satisfies the formula 
\[
    \mmm_{\Hb}(v) = \mmm(v) - \Beta.
\]
Let us mention that up to the $\K$-action on $\pg$ there are only finitely many $\Beta$'s of the form $\mmm(v_C)$, for $v_C$ a critical point of the energy map $\normmm$ (Lemma \ref{lem_finiteness}).

The two main statements to be proved are the fact that the strata are smooth submanifolds, and that not only the critical point $v_C$, but also the entire flow lines of the negative gradient flow of $\normmm$ converging to $v_C$, are contained in the corresponding stratum $\sca_\Beta$. From this, the other assertions in Theorem \ref{thm_stratif} will easily follow.


We first compute the moment map on $\Vzero$.


\begin{lemma}\label{lem_mmZbeta}
For $v \in \Vzero\zero$ we have that $\mmm(v) = \mmm_{\Hb}(v) + \Beta$. Moreover, 
if  $\mmm(v) = \Beta$, then $v$ is a critical point of $\normmm$.
\end{lemma}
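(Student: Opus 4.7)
The plan is to verify the formula $\mmm(v) = \mmm_{\Hb}(v) + \Beta$ by showing that the right-hand side lies in the appropriate subspace $\pg \cap \hg_\Beta$ where $\mmm_{\Hb}$ takes values, and then checking the two moment map equations agree via testing against all $A \in \pg \cap \hg_\Beta$.

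First, I would exploit the defining property of $\Vzero$: for $v \in \Vzero$ the equation $\Beta^+ \cdot v = 0$ means $\Beta \cdot v = \Vert \Beta \Vert^2 v$. Substituting $A = \Beta$ into the defining relation \eqref{eqn_defmm} immediately yields $\la \mmm(v), \Beta\ra = \Vert \Beta \Vert^2$, so $\mmm(v) - \Beta \perp \Beta$. Next I would show that $\mmm(v) \in \ggo_\Beta$, i.e.\ $[\mmm(v), \Beta] = 0$. Since $\mmm(v), \Beta \in \pg$ and $[\pg,\pg] \subset \kg$, it suffices to test $\la [\mmm(v), \Beta], X\ra$ for arbitrary $X \in \kg$. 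Using $\ad(\pg) \subset \sym(\ggo,\ip)$ from \eqref{eqn_assumipggo}, this pairing equals $-\la \mmm(v), [\Beta, X]\ra$, which by \eqref{eqn_defmm} becomes $-\tfrac{1}{\Vert v \Vert^2}\la (\Beta X - X\Beta) v, v\ra$. The symmetry of $\Beta$ together with $\Beta v = \Vert \Beta \Vert^2 v$ makes both terms cancel. Combined with $\la \mmm(v) - \Beta, \Beta\ra = 0$, we conclude $\mmm(v) - \Beta \in \pg \cap \hg_\Beta$.

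Now that $\mmm(v) - \Beta$ lives in $\pg \cap \hg_\Beta$, I only need to verify it satisfies the defining equation of $\mmm_{\Hb}(v)$: for any $A \in \pg \cap \hg_\Beta$, orthogonality $\la A, \Beta\ra = 0$ gives $\la \mmm(v) - \Beta, A\ra = \la \mmm(v), A\ra = \tfrac{1}{\Vert v\Vert^2}\la A \cdot v, v\ra$, which is precisely $\la \mmm_{\Hb}(v), A\ra$. This proves the first assertion.

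For the second assertion, assume $\mmm(v) = \Beta$. Then $\mmm(v)^+ = \Beta - \Vert \Beta\Vert^2 \Id = \Beta^+$, and since $v \in \Vzero$ we have $\Beta^+ \cdot v = 0$. Lemma \ref{lem_gradient} then yields $(\nabla \normmm)_v = \tfrac{4}{\Vert v \Vert^2}\, \Beta^+ \cdot v = 0$, so $v$ is critical.

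I do not expect any serious obstacle here: the only subtlety is the careful verification that $\mmm(v)$ commutes with $\Beta$, which boils down to the interplay between the symmetry of $\ad(\Beta)$ on $\ggo$ (assumption \eqref{eqn_assumipggo}) and the eigenvalue equation for $\Beta$ acting on $\Vzero$. Once that is in hand, the rest is formal manipulation of the defining identity \eqref{eqn_defmm}.
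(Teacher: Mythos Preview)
Your proof is correct and follows essentially the same route as the paper: both show $\mmm(v)\in\ggo_\Beta$ via the symmetry of $\ad(\Beta)$ from \eqref{eqn_assumipggo} combined with $\Beta^+\cdot v=0$, then compute the $\Beta$-component of $\mmm(v)$ directly, and finally invoke Lemma \ref{lem_gradient} for the critical point assertion. The only cosmetic difference is that you restrict the test vectors to $X\in\kg$ (which is all that is needed since $[\pg,\pg]\subset\kg$), whereas the paper nominally tests against arbitrary $\Alpha\in\ggo$ and phrases the cancellation via $[\Beta,\Alpha]=[\Beta^+,\Alpha]$.
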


\begin{proof}
Let $v  \in \Vzero\zero$ with $\Vert v \Vert=1$. 
Since $\Beta \in \pg$ and $\Beta^+ \cdot v = 0$, \eqref{eqn_assumipggo} implies that for any $\Alpha \in \ggo$ we have that
\begin{align*}
    \la [\Beta,\mmm(v)], \Alpha\ra =& \la \mmm(v), [\Beta, \Alpha] \ra  =
     \la [\Beta, \Alpha] \cdot v, v\ra
    	=\left\la [\Beta^+, \Alpha] \cdot v, v\right\ra = 0\,.
\end{align*}
This shows that $\mmm(v) \in \ggo_\Beta$. Recall that $\ggo_\Beta = \RR \Beta \oplus \hg_\Beta$, and observe that
\[
    \la \mmm(v), \Beta \ra = \la \Beta \cdot v, v\ra = \la \Beta^+ \cdot v, v\ra + \Vert \Beta\Vert^2
    =\la \Beta,\Beta\ra\,.
\]
Hence the $\Beta$-component of $\mmm(v)$ is precisely $\Beta$. On the other hand, the orthogonal projection of $\mmm(v)$ to $\hg_\Beta$ is $\mmm_{\Hb}(v)$, since $\Hb \subset \G$.

The  last assertion follows from
Lemma \ref{lem_gradient}, since $\mmm(v)^+ \cdot v = \Beta^+ \cdot v = 0$.
\end{proof}

In the next step, we deduce that $\normmm|_{\Vnn}$ attains its minimum 
precisely at the critical points of $\normmm$ with critical value $\Vert\Beta\Vert^2$.

\begin{lemma}\label{lem_estimatemm}
For $v \in \Vnn \backslash \{0 \}$ we have that $\Vert{\mmm(v)}\Vert \geq \Vert{\Beta}\Vert$, with equality if and only if $v\in \Vzero$ is a critical point of $\normmm$ with $\mmm(v) = \Beta$.
\end{lemma}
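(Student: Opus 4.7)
The plan is to prove the inequality by combining a pointwise lower bound for $\langle \mmm(v), \Beta\rangle$ with the Cauchy--Schwarz inequality, and then analyse the equality case.

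First I would assume $\Beta\neq 0$ (otherwise the claim is trivial). Using the eigenspace decomposition $v = \sum_{r\geq 0} v_r$ with $v_r\in \Vr$ and the identity $\Beta = \Beta^+ + \Vert\Beta\Vert^2\Id_V$, the defining relation \eqref{eqn_defmm} together with $\Beta\in\pg$ gives
\[
   \langle \mmm(v),\Beta\rangle = \tfrac{1}{\Vert v\Vert^2}\langle \Beta\cdot v,v\rangle = \tfrac{1}{\Vert v\Vert^2}\sum_{r\geq 0} r\,\Vert v_r\Vert^2 + \Vert\Beta\Vert^2 \;\geq\; \Vert\Beta\Vert^2,
\]
where the inequality uses $r\geq 0$ for each $v_r$. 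Cauchy--Schwarz in $\pg$ then yields
\[
   \Vert\Beta\Vert^2 \;\leq\; \langle \mmm(v),\Beta\rangle \;\leq\; \Vert\mmm(v)\Vert\cdot\Vert\Beta\Vert,
\]
from which $\Vert\mmm(v)\Vert\geq \Vert\Beta\Vert$ follows.

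For the equality case I would reverse the argument. If $\Vert\mmm(v)\Vert=\Vert\Beta\Vert$, then both of the above inequalities must be equalities. Equality in the lower bound forces $r\Vert v_r\Vert^2=0$ for all $r>0$, hence $v\in\Vzero$. Equality in Cauchy--Schwarz forces $\mmm(v)$ to be a non-negative multiple of $\Beta$, and the identity $\langle \mmm(v),\Beta\rangle=\Vert\Beta\Vert^2$ then pins the multiplier down to $1$, so $\mmm(v)=\Beta$. At this point the critical point assertion is immediate from Lemma \ref{lem_mmZbeta}, which shows that any $v\in\Vzero\zero$ with $\mmm(v)=\Beta$ is automatically a critical point of $\normmm$. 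The converse implication is trivial.

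I do not expect any serious obstacle: the only subtlety is being careful with the two sources of slack (one from dropping the positive weight contribution, one from Cauchy--Schwarz) and verifying that forcing both to be tight simultaneously pins down $v\in\Vzero$ \emph{and} $\mmm(v)=\Beta$, after which Lemma \ref{lem_mmZbeta} closes the argument.
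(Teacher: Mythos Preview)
Your proof is correct and follows essentially the same route as the paper: decompose $v=\sum_{r\geq 0}v_r$, compute $\langle\mmm(v),\Beta\rangle\geq\Vert\Beta\Vert^2$ with equality iff $v\in\Vzero$, apply Cauchy--Schwarz, and invoke Lemma~\ref{lem_mmZbeta} for the critical-point conclusion. The only cosmetic differences are that the paper normalises $\Vert v\Vert=1$ and does not separate out the trivial case $\Beta=0$.
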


\begin{proof}
We write $v = \sum_{r\geq 0} v_r$ with $v_r \in \Vr$ and $\Vert{v}\Vert^2=1$. Notice that $\Beta \cdot v_r = (r+\Vert \Beta \Vert^2) v_r$. Then, we have that
\[
	\la \mmm(v), \Beta \ra  = \sum_{r\geq 0} \la \Beta \cdot  v_r , v_r \ra \geq \Vert \Beta \Vert^2.
\]
It is clear that equality holds if and only if  $v \in \Vzero$. By Cauchy-Schwarz 
we deduce $\Vert{\mmm(v)}\Vert^2 \geq \la \mmm(v), \Beta\ra \geq \Vert{\Beta}\Vert^2$, with equality if and only if $\mmm(v) = \Beta$ and $v \in \Vzero$. By Lemma \ref{lem_mmZbeta} $v$ is then a critical point of $\normmm$. 
\end{proof}



The following lemma shows that critical points of $\normmm$ which are mapped to $\Beta$ under the moment map
 $\mmm$, correspond to minimal vectors for the $\Hb$-action on $\Vzero$. 



\begin{lemma}\label{lem_equivWbeta}
Let $v\in \Vzero$. Then, $v \in  \Vzeross$ if and only if there exists 
a critical point $v_C\in \overline{\Hb \cdot v} \backslash \{ 0\}$ 
of $\normmm$ with $\mmm(v_C) = \Beta$.
\end{lemma}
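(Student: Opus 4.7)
The plan is to deduce the statement by applying Theorem \ref{thm_realGIT} to the linear action of the real reductive group $\Hb$ on $\Vzero$, using Lemma \ref{lem_mmZbeta} as the bridge between the two pictures. The crucial translation is this: a vector $v_C \in \Vzero \setminus \{0\}$ is a critical point of $\normmm$ with $\mmm(v_C)=\Beta$ if and only if $\mmm_{\Hb}(v_C) = 0$, i.e.\ if and only if $v_C$ is a minimal vector for the $\Hb$-action on $\Vzero$. The forward implication here is exactly Lemma \ref{lem_mmZbeta}; conversely, if $\mmm_{\Hb}(v_C) = 0$ then $\mmm(v_C) = \Beta$ by Lemma \ref{lem_mmZbeta}, and then Lemma \ref{lem_gradient} gives $(\nabla \normmm)_{v_C} = \tfrac{4}{\Vert v_C \Vert^2} \Beta^+ \cdot v_C = 0$ since $v_C \in \Vzero = \ker \Beta^+$.

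For the direction ($\Leftarrow$), suppose there exists a critical point $v_C \in \overline{\Hb\cdot v}\setminus\{0\}$ with $\mmm(v_C)=\Beta$. Then $v_C$ is a minimal vector for the $\Hb$-action, so by Theorem \ref{thm_realGIT}(i) applied to $\Hb$ acting on $\Vzero$, the orbit $\Hb\cdot v_C \subset \overline{\Hb\cdot v}$ is closed and nontrivial. If $0$ were also in $\overline{\Hb\cdot v}$, the singleton $\{0\}$ would be a second closed $\Hb$-orbit inside $\overline{\Hb\cdot v}$, contradicting the uniqueness statement Theorem \ref{thm_realGIT}(iii). Hence $v\in\Vzeross$.

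For the direction ($\Rightarrow$), suppose $v\in\Vzeross$, so that $0\notin\overline{\Hb\cdot v}$. By Theorem \ref{thm_realGIT}(iii) (applied to $\Hb$ on $\Vzero$), the closure $\overline{\Hb\cdot v}$ contains a unique closed $\Hb$-orbit $\Hb\cdot w$, and by the semistability hypothesis this orbit is not $\{0\}$. The closest-to-origin point $v_C$ in $\Hb\cdot w$ is a minimal vector, and lies in $\overline{\Hb\cdot v}\setminus\{0\}$. By the translation above, $v_C$ is a critical point of $\normmm$ with $\mmm(v_C)=\Beta$, as required.

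There is no real obstacle; the whole content is already encoded in Lemma \ref{lem_mmZbeta} and in the applicability of Theorem \ref{thm_realGIT} to the restricted $\Hb$-action, which is justified because $\Hb$ is real reductive (Definition \ref{def_groups2}) and acts on $\Vzero$ with $\Vzero$ invariant, with the Cartan decomposition induced from that of $\G$. The only small care needed is to distinguish the case where $\Hb\cdot v$ is itself already closed (in which case one takes $v_C$ inside $\Hb\cdot v$) from the general case where one must pass to the unique closed orbit in the closure; both are handled uniformly by invoking parts (i) and (iii) of Theorem \ref{thm_realGIT} simultaneously.
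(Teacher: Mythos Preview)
Your proof is correct and follows essentially the same approach as the paper: both arguments reduce to Theorem~\ref{thm_realGIT} for the $\Hb$-action on $\Vzero$ together with Lemma~\ref{lem_mmZbeta}. The only cosmetic difference is that for the direction ($\Rightarrow$) the paper picks a vector of minimal norm in $\overline{\Hb\cdot v}$ directly and computes $\mmm(w)\perp\hg_\Beta$ by differentiating the norm, whereas you first invoke Theorem~\ref{thm_realGIT}(iii) to locate the closed orbit and then take its minimal vector; the content is identical.
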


\begin{proof}
Clearly, $0\notin \overline{\Hb \cdot v}$ is equivalent to the existence of a vector $w \in \overline{\Hb \cdot v}$ of minimal positive norm. This implies that for all $ \Alpha \in \hg_\Beta$ we have
\[
    0=  \ddt \big|_0 \Vert \exp(t \Alpha) \cdot w \Vert^2
     =  2\cdot \la \Alpha \cdot  w \, , \, w\rangle
     =  2 \cdot\Vert w \Vert^2\cdot \la \mmm(w), \Alpha \ra\,. 
\]
Thus, $\mmm(w) \perp \hg_\Beta$. By Lemma \ref{lem_mmZbeta} we deduce that $\mmm(w) = \Beta$, and that $w$ is a critical point. 

Conversely, if $0 \in \overline {\Hb \cdot v}$ then there would be two closed $\Hb$-orbits in $\overline {\Hb \cdot v}$: the one corresponding to $v_C$, and $\{0\}$. This contradicts Theorem \ref{thm_realGIT}.
\end{proof}


In the next lemma we show, up to the action of $\K$, the stratum equals $\Vnnss$. 

\begin{lemma}\label{lem_WbetaQstable}
The set
$\Vnnss$ is $\Qb$-invariant and
$\sca_\Beta = \K \cdot \Vnnss$.
\end{lemma}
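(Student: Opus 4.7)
The plan is to handle the two claims in sequence.

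\textbf{$\Qb$-invariance of $\Vnnss$.} Since $\Vnn$ is already $\Qb$-invariant by Lemma \ref{lem_Vnninv}, it suffices to show that $p_\Beta(q\cdot v)\in \Vzeross$ for all $q\in\Qb$ and $v\in \Vnnss$. Writing $q = gu$ with $u\in \Ub$ and $g\in \Gb$ (from $\Qb = \Gb\Ub$), Lemma \ref{lem_pbeta} gives that $p_\Beta$ is $\Gb$-equivariant and constant on each $\Ub$-orbit in $\Vnn$, so
\[
    p_\Beta(gu\cdot v) \;=\; g\cdot p_\Beta(u\cdot v) \;=\; g\cdot p_\Beta(v).
\]
Next, using the decomposition $\Gb = \exp(\RR\Beta)\cdot \Hb$ noted after Definition \ref{def_groups2}, write $g = \exp(s\Beta)\,h$ with $s\in\RR$ and $h\in\Hb$. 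Since $\Beta^+\cdot w = 0$ for $w\in \Vzero$, the element $\Beta$ acts on $\Vzero$ as the scalar $\Vert\Beta\Vert^2\,\Id$, so $\exp(s\Beta)$ acts on $\Vzero$ as multiplication by $e^{s\Vert\Beta\Vert^2}$. The subset $\Vzeross$ is scale-invariant (Remark \ref{rmk_Sscaleinv}) and manifestly $\Hb$-invariant (as $\overline{\Hb\cdot (h\cdot v)} = \overline{\Hb\cdot v}$), so $g\cdot p_\Beta(v)\in\Vzeross$, and hence $q\cdot v\in\Vnnss$.

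\textbf{The equality $\sca_\Beta = \K\cdot\Vnnss$.} The inclusion $\K\cdot\Vnnss\subset \sca_\Beta = \G\cdot\Vnnss$ is immediate. For the reverse inclusion I will invoke the Iwasawa-type decomposition $\G = \K\,\Qb$, which is a standard consequence of the adapted Cartan decomposition \eqref{eqn_Cartandec} applied to the parabolic subgroup $\Qb$, and is expected to be among the Lie-theoretic facts proven in Appendix \ref{app_groups}. Combining this with the $\Qb$-invariance of $\Vnnss$ just established,
\[
    \sca_\Beta \;=\; \G\cdot\Vnnss \;=\; \K\,\Qb\cdot\Vnnss \;=\; \K\cdot\Vnnss,
\]
which finishes the proof.

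\textbf{Main obstacle.} The delicate point is the $\Gb$-part of step one: one must really use that $\Gb$ splits as the commuting product $\exp(\RR\Beta)\times \Hb$, so that the $\Gb$-action on $\Vzero$ decomposes into the scalar flow generated by $\Beta$ (which preserves $\Vzeross$ by scale-invariance) and an honest $\Hb$-action (which preserves $\Vzeross$ by construction); absent such a splitting, one could not conclude that arbitrary elements of $\Gb$ preserve $\Hb$-semistability. A secondary point is the availability of $\G = \K\Qb$, but this is a routine consequence of the parabolic Iwasawa decomposition established in the appendix.
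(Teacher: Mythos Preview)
Your proof is correct and follows essentially the same approach as the paper: both use the splitting $\Gb=\exp(\RR\Beta)\times\Hb$ together with the fact that $\exp(\RR\Beta)$ acts by scalars on $\Vzero$ and the $\Hb$-invariance of $\Vzeross$ to get $\Gb$-invariance, then invoke Lemma \ref{lem_pbeta} for the $\Ub$-part, and finally use $\G=\K\Qb$ (Lemma \ref{lem_groupsGGbeta}) for the second claim. The only cosmetic difference is that the paper first shows $\Gb$ preserves $\Vzeross$ and then deduces $\Qb$-invariance of $\Vnnss$, whereas you compute $p_\Beta(q\cdot v)$ directly; the content is the same.
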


\begin{proof}
The group $\Hb$ leaves $\Vzeross$ invariant by definition. Since $\Vzeross \subset \Vzero$, and the latter is an eigenspace of $\exp(\Beta)$, it follows that $\Gb = \exp(\RR \Beta) \times \Hb$ preserves $\Vzeross$, because an orbit is closed if and only if the scaled orbit is so.  By its definition and Lemma \ref{lem_pbeta}
 we have that $\Vnnss$ is $\Gb$-invariant and  $\Ub$-invariant. 
 The last assertion follows from the facts that  $\sca_\Beta= \G \cdot \Vnnss$ and $\G=\K \Qb$: see Lemma \ref{lem_groupsGGbeta}.
\end{proof}

\begin{corollary}\label{cor_estimatemmm}
For $v \in \overline{\sca_\Beta} \backslash \{ 0\}$ we have that $\Vert {\mmm(v)} \Vert \geq \Vert{\Beta} \Vert$. Equality holds if and only if $\mmm(v) \in \K\cdot \Beta$, and in this case $v\in \sca_\Beta$ is a critical point for $\normmm$. 
\end{corollary}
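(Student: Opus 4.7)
The plan is to exploit the $\K$-equivariance of the moment map together with the key identity $\sca_\Beta = \K\cdot \Vnnss$ from Lemma \ref{lem_WbetaQstable} in order to reduce the statement to the bound on $\Vnn$ already established in Lemma \ref{lem_estimatemm}.

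First I would treat the case $v\in \sca_\Beta$ itself: write $v = k\cdot w$ with $k\in \K$ and $w\in \Vnnss \subset \Vnn$. Since the scalar products on $\pg$ are $\Ad(\K)$-invariant and $\mmm$ is $\K$-equivariant, one has $\Vert \mmm(v)\Vert = \Vert \mmm(w)\Vert$, and Lemma \ref{lem_estimatemm} gives $\Vert \mmm(w)\Vert \geq \Vert \Beta\Vert$. To pass to the closure, take a sequence $(v_n)\subset \sca_\Beta$ with $v_n\to v\in \overline{\sca_\Beta}\backslash\{0\}$, and decompose $v_n = k_n \cdot w_n$ with $k_n\in \K$, $w_n \in \Vnnss$. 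Using compactness of $\K$, pass to a subsequence so that $k_n\to k_\infty\in \K$; then $w_n = k_n^{-1}\cdot v_n \to k_\infty^{-1}\cdot v =: w$. Since $\Vnn$ is a closed subspace of $V$ we have $w\in \Vnn\backslash \{0\}$, and Lemma \ref{lem_estimatemm} together with continuity of $\mmm$ yields $\Vert \mmm(v)\Vert = \Vert \mmm(w)\Vert \geq \Vert \Beta\Vert$.

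Next, for the equality case, if $\Vert \mmm(v)\Vert = \Vert \Beta\Vert$, then the limit $w$ above satisfies $\Vert \mmm(w)\Vert = \Vert \Beta\Vert$, so by the rigidity part of Lemma \ref{lem_estimatemm} we get $w\in \Vzero$, $\mmm(w)=\Beta$, and $w$ is a critical point of $\normmm$. Consequently $\mmm(v) = k_\infty\,\Beta\,k_\infty^{-1}\in \K\cdot \Beta$. Finally, to see that $v\in \sca_\Beta$ in this case, apply Lemma \ref{lem_equivWbeta}: since $w\in \overline{\Hb\cdot w}\backslash\{0\}$ is itself a critical point with $\mmm(w)=\Beta$, we have $w\in \Vzeross\subset \Vnnss$, and therefore $v=k_\infty\cdot w \in \K\cdot \Vnnss = \sca_\Beta$. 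That $v$ is a critical point of $\normmm$ follows from the $\K$-invariance of $\normmm$.

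The main obstacle I anticipate is purely bookkeeping: one has to be careful that the rescaling implicit in working with $\mmm$ (which is defined on $V\backslash\{0\}$ via $\Vert v\Vert^{-2}$) behaves well under the limit, and that the decomposition $v_n = k_n\cdot w_n$ with $w_n \in \Vnnss$ is genuinely available, which is exactly the content of Lemma \ref{lem_WbetaQstable}. Once these are in place the argument is essentially a compactness-and-continuity reduction to Lemma \ref{lem_estimatemm} and Lemma \ref{lem_equivWbeta}, so no new geometric input is required.
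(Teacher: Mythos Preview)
Your argument is correct and follows essentially the same route as the paper: reduce to $\K\cdot \Vnn$ via Lemma~\ref{lem_WbetaQstable} and compactness of $\K$, then invoke Lemma~\ref{lem_estimatemm} and $\K$-equivariance. Your version is simply more explicit in spelling out the sequence argument and in citing Lemma~\ref{lem_equivWbeta} to verify that the equality case lands back in $\sca_\Beta$, which the paper leaves implicit.
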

\begin{proof}
From Lemma \ref{lem_WbetaQstable} and compactness of $\K$ we have that $\overline{\sca_\Beta} \subset \K \cdot \overline{\Vnnss} \subset \K \cdot \Vnn$. The claim now follows from Lemma \ref{lem_estimatemm} and the $\K$-equivariance (resp. invariance) of $\mmm$ (resp. $\normmm$).
\end{proof}

This shows that on the stratum $\sca_\Beta$ the energy 
map $\normmm$ is bounded below by the critical value $\normmm(v_C)$, for $v_C \in \mmm^{-1}(\Beta)$. Moreover, the minimum is attained precisely at those
critical points $v_C$ of $\normmm$  with $\mmm(v_C)\in \K\cdot\Beta$.




\begin{corollary}\label{cor_closureGmu}
If $v \in \sca_\Beta$ then there exists a critical point
$v_C \in  \overline{( \RR_{>0} \cdot \G) \cdot v} \subset \overline{\sca_\Beta}$  of $\normmm$ with $\mmm(v_C) = \Beta$.  
\end{corollary}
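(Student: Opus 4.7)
The plan is to use $\sca_\Beta = \G \cdot \Vnnss$ to reduce to the case $v \in \Vnnss$, then to pass from $v$ to $p_\Beta(v) \in \Vzeross$ (recognising that $p_\Beta$ is realised by a one-parameter flow lying inside $\RR_{>0} \cdot \G$), and finally to apply Lemma \ref{lem_equivWbeta} to $p_\Beta(v)$ to produce the desired critical point.

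First, since $\sca_\Beta = \G\cdot \Vnnss$, I write $v = g \cdot w$ with $g \in \G$ and $w \in \Vnnss$. Then $(\RR_{>0}\cdot\G)\cdot v = (\RR_{>0}\cdot\G)\cdot w$, so I can assume $v \in \Vnnss$ from the start. Moreover, by Remark \ref{rmk_Sscaleinv} and the $\G$-invariance of $\sca_\Beta$ built into Definition \ref{def_Sbeta}, the set $\sca_\Beta$ is $(\RR_{>0}\cdot\G)$-invariant, so the inclusion $\overline{(\RR_{>0}\cdot\G)\cdot v}\subset\overline{\sca_\Beta}$ is immediate.

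The heart of the argument is the following observation: since $\Beta^+ = \Beta - \Vert\Beta\Vert^2\Id$ and $\Beta \in \pg \subset \ggo$, we have
\[
 \exp(-t\Beta^+)\cdot v \,=\, e^{t\Vert\Beta\Vert^2}\cdot \exp(-t\Beta)\cdot v \,\in\, (\RR_{>0}\cdot\G)\cdot v,
\]
so Lemma \ref{lem_pbeta} yields
\[
 p_\Beta(v) \,=\, \lim_{t\to\infty}\exp(-t\Beta^+)\cdot v \,\in\, \overline{(\RR_{>0}\cdot\G)\cdot v}.
\]
Since $v \in \Vnnss = p_\Beta^{-1}(\Vzeross)$, the point $p_\Beta(v)$ lies in $\Vzeross$, and Lemma \ref{lem_equivWbeta} provides a critical point $v_C \in \overline{\Hb\cdot p_\Beta(v)}\setminus\{0\}$ of $\normmm$ with $\mmm(v_C) = \Beta$.

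It remains to transfer $v_C$ into $\overline{(\RR_{>0}\cdot\G)\cdot v}$. Because $\Hb\subset \G$ and $\RR_{>0}$ commutes with every element of $\Gl(V)$, we have $\Hb\cdot(\RR_{>0}\cdot\G)\cdot v = (\RR_{>0}\cdot\G)\cdot v$; continuity of the $\Hb$-action therefore gives
\[
 \overline{\Hb\cdot p_\Beta(v)} \,\subset\, \overline{\Hb\cdot \overline{(\RR_{>0}\cdot\G)\cdot v}} \,\subset\, \overline{(\RR_{>0}\cdot\G)\cdot v},
\]
so in particular $v_C$ lies there, as required. The only step that I expect to require genuine thought, rather than bookkeeping with closures, is the key identity $\exp(-t\Beta^+) = e^{t\Vert\Beta\Vert^2}\exp(-t\Beta)$: it explains exactly why the corollary must be stated for $\RR_{>0}\cdot \G$ instead of $\G$, since without the scalar correction coming from the $-\Vert\Beta\Vert^2\Id$ summand of $\Beta^+$ the flow defining $p_\Beta$ would not land in an orbit of $\G$.
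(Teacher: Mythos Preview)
Your proof is correct and follows essentially the same route as the paper: reduce to $v\in\Vnnss$, use the factorisation $\exp(-t\Beta^+)=e^{t\Vert\Beta\Vert^2}\exp(-t\Beta)\in\RR_{>0}\cdot\G$ together with Lemma~\ref{lem_pbeta} to place $p_\Beta(v)$ in $\overline{(\RR_{>0}\cdot\G)\cdot v}$, and then invoke Lemma~\ref{lem_equivWbeta}. The paper compresses your final closure chain into the single line $\overline{\Hb\cdot p_\Beta(k\cdot v)}=\overline{p_\Beta(\Hb\cdot k\cdot v)}\subset\overline{(\RR_{>0}\cdot\G)\cdot v}$ (using the $\Gb$-equivariance of $p_\Beta$), but the content is identical.
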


\begin{proof}
By Lemma \ref{lem_WbetaQstable} we may replace $v$ by $k\cdot v \in \Vnnss$, 
for some $k\in \K$. Applying Lemma \ref{lem_equivWbeta} to $p_\Beta(k\cdot v) \in \Vzeross$ and using Lemma \ref{lem_pbeta}
and (\ref{eqn_pbeta}) one obtains 
\[
    v_C \in   \overline{ \Hb \cdot p_\Beta(k\cdot v) }  =  \overline{ p_\Beta (\Hb \cdot k \cdot v) } \subset  \overline{(\RR_{>0} \cdot \G) \cdot v}
\]
as in the statement.
\end{proof}

 

We are now in a position to prove Theorem \ref{thm_stratif}. The key analytical property is the
 negativity of the Hessian of the energy map $\normmm$ restricted to
the normal space of a stratum at a critical point: see Lemma \ref{lem_Hess}.

\begin{proof}[Proof of Theorem \ref{thm_stratif}]
For each critical point $v_C$ of $\normmm$ we consider the set $\sca_\Beta$ defined in Definition \ref{def_Sbeta}, where $\Beta = \mmm(v_C)$.
Notice first that  for all $k\in \K$,
$\sca_{k \cdot \Beta \cdot k^{-1}} = \sca_\Beta$ by $\K$-equivariance. 
By picking one representative for each $\K$-orbit, say a diagonal $\Beta$ with eigenvalues in non-decreasing order, the strata may be parameterized by a finite set $\bca$ by Lemma \ref{lem_finiteness}.

We now prove one direction in (iii). Let $v(t)$ denote a solution to
the negative gradient flow of $\normmm$ with $v(0)=v \in V\zero$.
Since $\normmm$ is scale-invariant we have $\Vert {v(t)} \Vert \equiv \Vert {v}\Vert$.
Using that $\normmm$ is real analytic, by {\L}ojasiewicz' theorem \cite{Loj63} there exists a unique
limit point $\lim_{t \to \infty}v(t)=v_C$, which is of course
a critical point of $\normmm$.{}
 Let $\Beta := \mmm(v_C)$ and notice that $v_C \in \Vzeross \subset \sca_\Beta$ by Lemma \ref{lem_equivWbeta}. 
Since $\sca_\Beta$ is a smooth embedded submanifold of $V$ by Proposition \ref{prop_scasmooth}, there exists an open neighbourhood $\Omega\subset V$ of $v_C$ which is diffeomorphic to the normal bundle of $\sca_\Beta$ restricted to $\sca_\Beta \cap \Omega$, such that $\sca_\Beta \cap \Omega$ is the zero section. For some $t_0 \in \RR$ we  have $v(t) \in \Omega$, $\forall t\geq t_0$. Also, since $v_C \in \Vnn \cap \sca_\Beta$, we obtain
$T_{v_C} \sca_\Beta = \kg \cdot  v_C+ \Vnn  $.
By the Hessian computations from Lemma \ref{lem_Hess} and a standard second-order argument, see Remark \ref{rem_secord}, we conclude that we must have $v(t) \in \sca_\Beta \cap \tilde \Omega$ for all $t\geq t_0$
and some open subset $\tilde \Omega \subset \Omega$. 
 Since the flow lines are, up to scaling, tangent to $\G$-orbits,
and since the stratum $\sca_\Beta$ is $\G$-invariant and scale-invariant (Remark \ref{rmk_Sscaleinv}),
we conclude that $v \in \sca_\Beta$ is  as well. As a consequence
$V\backslash \{0\}=\bigcup_{\Beta \in \bca} \sca_\Beta$.

To prove (i) it remains to show that $\sca_\Beta \cap \sca_{\Beta'} \neq \emptyset$ implies $\K \cdot \Beta = \K \cdot \Beta'$. Suppose that $v \in \sca_\Beta\cap \sca_{\Beta'}$.  By Corollary \ref{cor_closureGmu} one obtains 
critical points $v_C \in \overline{\sca_\Beta}$ 
and $v_C' \in \overline{\sca_{\Beta'}}$ 
with $\mmm(v_C) = \Beta$ and $\mmm(v_C') = \Beta'$. Since $v_C,v_C' \in 
\overline{( \RR_{>0} \cdot \G) \cdot v}$,
we deduce $v_C,v_C' \in \overline{ \sca_\Beta \cap \sca_{\Beta'}}$. 
Applying Corollary \ref{cor_estimatemmm} twice we get $\Vert{\Beta}\Vert = \Vert{\Beta'}\Vert$, thus by the rigidity in the equality case in that result we conclude that $\K\cdot \Beta =\K \cdot \Beta'$.

We can now prove the other direction in (iii). Let $v \in \sca_\Beta$ and assume that the limit $v_C$ of the negative gradient flow of $\normmm$ starting at $v$ satisfies $\mmm(v_C) = \Beta'$. By the above we have $v \in \sca_{\Beta'}$, thus $\sca_\Beta \cap \sca_{\Beta'} \neq \emptyset$ and hence $\Beta' \in \K \cdot \Beta$ and $\sca_\Beta = \sca_{\Beta'}$.

Finally, to show (ii) let $v \in \overline{\sca_\Beta} \backslash \sca_\Beta$, with say $v \in \sca_{\Beta'}$. By (iii) we may assume that $\mmm(v) = \Beta'$, thus Corolary \ref{cor_estimatemmm} applied to $\overline{\sca_\Beta}$ yields $\Vert{\Beta'}\Vert = \Vert{\mmm(v)}\Vert \geq \Vert{\Beta}\Vert$. Since equality would imply that $\Beta' \in \K \cdot \Beta$ and $\sca_{\Beta'} = \sca_{\Beta}$, contradicting the fact that $v\notin \sca_\Beta$, we deduce $\Vert{\Beta'}\Vert > \Vert{\Beta}\Vert$.
\end{proof}

\begin{remark}\label{rem_secord}
We now briefly explain the second-order argument mentioned in the proof above.
Let $Z(x,y)=(f(x,y),g(x,y))$ be a smooth vector field on $\RR^2$,
such that $g(x,0)=0$ for all $x \in \RR$, $f(0,0)=0$, $\frac{\partial g}{\partial y}(0,0)=2c>0$
and $\frac{\partial g}{\partial x}(0,0)=0$. Then by Taylor's formula 
$g(x,y)= 2c y + \eta  C_1  x y +\tilde \eta C_2 y^2$
where $\eta=\eta_{x,y} , \tilde \eta=\tilde \eta_{x,y}\in (0,1)$ 
and $C_1,C_2 \in \RR$. It follows that
for $y \neq 0$ we have $y g(x,y) > cy^2>0$ for all $(x,y) \in B_\epsilon((0,0))$,
$\epsilon > 0$ small enough. This shows that the vector field $Z$ cannot have
an integral curve converging to the origin, unless it is contained in the $x$-axis.
\end{remark}

\section{Properties of critical points of the energy map}\label{sec_critical}

In this section we prove some properties of the
critical points of the energy map $\normmm$. We first show that they are mapped under $\mmm$ onto finitely many $\K$-orbits.

\begin{lemma}\label{lem_finiteness}
The moment map $\mmm$ maps the set of critical point of $\normmm$ onto
a finite number of $\K$-orbits.
\end{lemma}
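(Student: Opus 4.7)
The plan is to reduce the problem to a combinatorial counting argument using the weight decomposition of the torus action, exploiting $\K$-equivariance of the moment map. First, fix a maximal abelian subalgebra $\tg \subset \pg$ with corresponding torus $\T = \exp(\tg)$. A standard consequence of the decomposition $\G = \K\T\K$ (Corollary \ref{cor_KTK}, proved in the appendix) is that every $\K$-orbit in $\pg$ meets $\tg$: indeed, if $A \in \pg$, writing $\exp(A) = k_1 \exp(\lambda) k_2$ with $\lambda \in \tg$ reduces, after exponentiating, to $A = \Ad(k_1)\lambda$ in a suitable neighbourhood, and the global statement follows by $\Ad(\K)$-invariance. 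Hence, given a critical point $v_C$ of $\normmm$, after replacing $v_C$ by $k \cdot v_C$ for a suitable $k \in \K$, we may assume $\Beta := \mmm(v_C) \in \tg$, and it suffices to show that the set of such $\Beta$ is finite.

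Next I would use the simultaneous eigenspace decomposition $V = \bigoplus_{i=1}^{N} V_i$ of $\tg$, with weights $\alpha_1, \ldots, \alpha_N \in \tg$ as in Section~\ref{sec_torusactions}. The element $\Beta^+ = \Beta - \Vert\Beta\Vert^2 \Id$ acts on $V_i$ as the scalar $\la \Beta, \alpha_i \ra - \Vert \Beta\Vert^2$. By Lemma \ref{lem_gradient}, $v_C$ being critical is equivalent to $\Beta^+ \cdot v_C = 0$, so writing $v_C = \sum_i v_{C,i}$ with $v_{C,i} \in V_i$, the subset
\[
    I := \{ i : v_{C,i} \neq 0 \} \subset \{1, \ldots, N\}
\]
satisfies $\la \Beta, \alpha_i \ra = \Vert \Beta\Vert^2$ for all $i \in I$. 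On the other hand, testing the defining equation \eqref{eqn_defmm} of $\mmm(v_C) = \Beta$ against an arbitrary $\lambda \in \tg$ (which lies in $\pg$ since $\tg \subset \pg$) yields
\[
    \la \Beta, \lambda\ra = \tfrac{1}{\Vert v_C\Vert^2} \sum_{i\in I} \la \lambda, \alpha_i\ra \, \Vert v_{C,i}\Vert^2 = \Big\la \lambda, \sum_{i\in I} c_i\, \alpha_i \Big\ra,
\]
where $c_i := \Vert v_{C,i}\Vert^2 / \Vert v_C\Vert^2 > 0$ and $\sum_{i\in I} c_i = 1$. Since both sides define linear functionals on $\tg$ coming from elements of $\tg$, we conclude $\Beta = \sum_{i \in I} c_i \alpha_i \in CH\{\alpha_i : i \in I\}$.

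The key combinatorial observation is that these two properties characterise $\Beta$ uniquely as the minimum-norm element of the convex hull $CH\{\alpha_i : i \in I\}$. Indeed, let $\Beta_0$ denote that minimum-norm element. By the projection characterisation of $\Beta_0$ we have $\la \Beta_0, \gamma \ra \geq \Vert \Beta_0\Vert^2$ for all $\gamma$ in the convex hull, while the equalities $\la \Beta, \alpha_i\ra = \Vert \Beta\Vert^2$ for $i \in I$ extend by convexity to $\la \Beta, \gamma\ra = \Vert \Beta\Vert^2$ for every $\gamma \in CH\{\alpha_i : i\in I\}$. Taking $\gamma = \Beta_0$ and applying Cauchy--Schwarz to $\Vert \Beta\Vert^2 = \la \Beta, \Beta_0\ra \leq \Vert \Beta\Vert \Vert \Beta_0\Vert$ forces $\Vert \Beta\Vert \leq \Vert \Beta_0\Vert$, and combined with the defining property of $\Beta_0$ one obtains $\Beta = \Beta_0$. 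Thus every $\Beta = \mmm(v_C) \in \tg$ equals $\Beta_I$ for some $I \subset \{1,\ldots, N\}$, and since there are only $2^N$ such subsets, $\mmm$ sends the critical set of $\normmm$ onto finitely many $\K$-orbits. I expect the main subtlety to be the assertion that every $\Ad(\K)$-orbit in $\pg$ meets $\tg$; this is a standard Lie-theoretic fact but needs to be verified from our axiomatic assumption \eqref{eqn_Cartandec}, which is precisely the role of the appendix.
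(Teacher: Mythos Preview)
Your proposal is correct and follows essentially the same route as the paper: reduce via $\K$-equivariance to $\Beta=\mmm(v_C)\in\tg$, use $\Beta^+\cdot v_C=0$ to get $\la\Beta,\alpha_i\ra=\Vert\Beta\Vert^2$ for $i\in I$, observe from \eqref{eqn_defmm} that $\Beta$ lies in the convex hull of $\{\alpha_i:i\in I\}$, and conclude that $\Beta$ is the (unique) minimum-norm point of that hull, hence determined by the finite datum $I$. One small caveat: your sketched derivation of ``every $\Ad(\K)$-orbit in $\pg$ meets $\tg$'' from $\G=\K\T\K$ is not quite right as written (from $\exp(A)=k_1\exp(\lambda)k_2$ one does not get $A=\Ad(k_1)\lambda$ unless $k_2=k_1^{-1}$), but as you note this is exactly Proposition~\ref{prop_conjugate} in the appendix, so the argument stands.
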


\begin{proof}
Fix an orthonormal basis $\{e_i\}_{i=1}^N$ for $V$ which diagonalizes the action of $\T$, and let us adopt the notation from Section \ref{sec_torusactions}. If $v = \sum v_i e_i \in V$ then 
\begin{equation}\label{eqn_pialphadiag}
	\alpha \cdot v = \sum \la\alpha, \alpha_i\ra \, v_i e_i,
\end{equation}
for any $\alpha \in \tg$. By \eqref{eqn_defmm} we obtain $\la \mmm(v), \alpha \ra = \tfrac1{\Vert {v}\Vert^2} \cdot \sum \la \alpha, \alpha_i\ra v_i^2$, thus if $\mmm(v) \in \tg$ it follows that 
\begin{equation}\label{eqn_mmcc}
    \mmm(v) = \sum \tfrac{v_i^2}{\Vert {v}\Vert^2} \cdot \alpha_i\,,
\end{equation}
Hence $\mmm(v)\in \Delta_I $, the convex hull of those $\alpha_i$ for which $v_i \neq 0$.

Let now $v_C$ be a critical point of $\normmm$ such that $\Beta := \mmm(v_C)$
is diagonal. Then  $\Beta \cdot v = \Vert \Beta\Vert^2 v$ by Lemma \ref{lem_gradient}. Thus, by equation \eqref{eqn_pialphadiag} 
we must have $\la \Beta, \alpha_i \ra = \Vert \Beta \Vert^2$ for all $i$ such that $v_i \neq 0$. 
Equivalently $\la \Beta, \alpha_i - \Beta \ra = 0$ for all such $i$.
We deduce that $\Beta \perp (\alpha - \Beta)$, for all $\alpha \in \Delta_I$. 
Since $\Beta \in \Delta_I$ by \eqref{eqn_mmcc}, $\Beta$ is the element of minimal norm in $\cca \hca$. Therefore, there are only a finite number of possible $\Beta$'s.
\end{proof}

If $v_C$ is a critical point of the energy map $\normmm$, then the orbit $\K\cdot v_C$ consists of critical points too. By the $\K$-equivariance of the moment map, we may therefore assume that $\Beta:=\mmm(v_C)$ is diagonal. Hence $\Beta^+ \cdot v_C=0$  by Lemma \ref{lem_gradient}, thus $v_C \in \Vnn$. As a consequence
$\Beta^+$ preserves the orthogonal complement of $v_C$ in $V$.

\begin{lemma}\label{lem_Hess}
Let $v_C$ be a critical point of $\normmm$ with $\mmm(v_C) = \Beta$
and let $w \perp v_C$ be an eigenvector of $\Beta^+$ with eigenvalue 
$\lambda_{\Beta^+}\in \RR$ and $\Vert w \Vert = \Vert v_C \Vert$.
Then 
\[
    (\Hess_{v_C}\!\normmm)(w,w) 
    = 4 \cdot \lambda_{\Beta^+} + 2 \cdot \Vert { (d {\mmm})_{v_C}\cdot w} \Vert^2\,.
\]
Moreover, on the subspaces $\kg \cdot  v_C$, $\Vnn$ and
$\big(\kg \cdot v_C+\Vnn \big)^\perp$ the Hessian of $\normmm$ is zero,
non-negative and negative, respectively.
\end{lemma}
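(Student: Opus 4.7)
The plan is to compute the Hessian along the affine line $v(t)=v_C+tw$ and then read off the three-subspace behaviour from the resulting identity, the main algebraic input being a decomposition of $\pg$ compatible with the $\ad(\Beta)$-eigenspaces.

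For the formula itself, I would use the defining identity $\Vert v\Vert^2\,\langle\mmm(v),A\rangle = \langle A\cdot v, v\rangle$ from Definition \ref{def_mm} to expand $\mmm(v_C+tw)$ to second order in $t$; since $w\perp v_C$ and $\mmm(v_C)=\Beta$, this expansion has a first order term encoding $(d\mmm)_{v_C}(w)$ (as already obtained in the proof of Lemma \ref{lem_gradient}) and a second order term proportional to $\mmm(w)-\Beta$. Differentiating $f(t)=\Vert\mmm(v(t))\Vert^2$ twice, evaluating at $t=0$, and using $\langle \Beta\cdot w,w\rangle = \langle\Beta^+\cdot w,w\rangle + \Vert\Beta\Vert^2\Vert w\Vert^2$, one arrives at the general identity
\[
    \Hess_{v_C}(\normmm)(w,w) \,=\, 2\Vert(d\mmm)_{v_C}w\Vert^2 + \tfrac{4}{\Vert v_C\Vert^2}\langle \Beta^+\cdot w,w\rangle
\]
for every $w\perp v_C$. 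Specialising to $\Vert w\Vert=\Vert v_C\Vert$ and $w$ an eigenvector of $\Beta^+$ with eigenvalue $\lambda_{\Beta^+}$ gives the stated identity.

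For the three subspace claims I would argue as follows. On $\kg\cdot v_C$ the Hessian vanishes because $\normmm$ is constant along the $\K$-orbit $t\mapsto \exp(tX)\cdot v_C$ for $X\in\kg$. For $\Vnn$, note first that $v_C\in\Vzero\subset\Vnn$ lies in the kernel of the Hessian: differentiating the scale invariance $\normmm(sv)\equiv\normmm(v)$ and using $(d\normmm)_{v_C}=0$ gives $\Hess_{v_C}(v_C,\cdot)\equiv 0$. Hence it suffices to evaluate the bilinear form on $\Vnn\cap v_C^\perp$, where the identity gives non-negativity since both $\Vert(d\mmm)_{v_C}w\Vert^2\geq 0$ and $\langle\Beta^+\cdot w,w\rangle\geq 0$ (the latter because $\Beta^+$ has non-negative eigenvalues on $\Vnn$).

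The main obstacle is the third assertion. For $w\in(\kg\cdot v_C+\Vnn)^\perp$ one has $w$ in the sum of strictly negative eigenspaces of $\Beta^+$, so $\langle\Beta^+\cdot w,w\rangle<0$ whenever $w\neq 0$; however the term $\Vert(d\mmm)_{v_C}w\Vert^2$ could a priori spoil negativity. I would therefore prove $(d\mmm)_{v_C}w=0$ for such $w$. Using the identity $\langle(d\mmm)_{v_C}w,A\rangle=\tfrac{2}{\Vert v_C\Vert^2}\langle A\cdot v_C,w\rangle$ for $A\in\pg$ and $w\perp v_C$ (see the proof of Lemma \ref{lem_gradient}), this reduces to the inclusion
\[
    \pg\cdot v_C \,\subset\, \kg\cdot v_C + \Vnn\,.
\]
To establish this, I decompose $\ggo=\bigoplus_r \ggo_r$ into $\ad(\Beta)$-eigenspaces, noting that the Cartan involution $\theta(X)=-X^t$ satisfies $\theta(\ggo_r)=\ggo_{-r}$. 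Any $A\in\pg$, by $\theta(A)=-A$, has the form $A=A_0+\sum_{r>0}(A_r-\theta(A_r))$ with $A_r\in\ggo_r$. Here $A_0\in\pg_\Beta\subset\Lie(\Gb)$, and since $\Gb$ commutes with $\Beta^+$ it preserves $\Vzero$; hence $A_0\cdot v_C\in\Vzero\subset\Vnn$. For $r>0$ the calculation of Lemma \ref{lem_Vnninv} gives $A_r\cdot v_C\in\Vr\subset\Vnn$, while $A_r+\theta(A_r)=A_r-A_r^t$ is skew-symmetric and lies in $\ggo$ (by closedness of $\ggo$ under transpose), hence in $\kg$. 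Thus $(A_r-\theta(A_r))\cdot v_C = 2\,A_r\cdot v_C - (A_r-A_r^t)\cdot v_C \in \Vnn+\kg\cdot v_C$, and summing over $r$ yields the claim.
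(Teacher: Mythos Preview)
Your proposal is correct and follows essentially the same approach as the paper: both compute the second derivative of $\normmm$ along a curve through $v_C$ (you use the affine line $v_C+tw$, the paper uses the sphere arc $\cos(t)v_C+\sin(t)w$, which agree at a critical point), and both reduce the negativity on $(\kg\cdot v_C+\Vnn)^\perp$ to the inclusion $\ggo\cdot v_C\subset \kg\cdot v_C+\Vnn$. Your explicit Cartan-involution decomposition $A=A_0+\sum_{r>0}(A_r-\theta(A_r))$ is exactly the content of the paper's one-line appeal to Lemma~\ref{lem_Vnninv} together with the (implicit) identity $\ggo=\kg+\qg_\Beta$, so the arguments are the same in substance.
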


\begin{proof}
As in the proof of Lemma \ref{lem_gradient} we set $v(t)=\cos(t)v_C+\sin(t)w$.
This implies for all $t$ that
$	\ddt \normmm(v(t))  = 2 \cdot  \left\la \mmm(v(t)), (d {\mmm})_{v(t)} (v'(t)) \right\ra$ and
differentiating once more yields
\begin{align*}
	\tfrac{{\rm d}^2}{{\rm d}t^2} \big|_0 \normmm(v(t)) 
	&=  \,2 \cdot \ddt\big|_0 \left \la \Beta, (d {\mmm})_{v(t)} \cdot v'(t) \right\ra 
	 + 2 \cdot \Vert  (d {\mmm})_{v_C} \cdot w \Vert^2\,.
\end{align*}
From  (\ref{def_diffmoment}) we deduce
\begin{align*}
 	\ddt\big|_0 \left \la \Beta, (d {\mmm})_{v(t)} \cdot v'(t)  \right\ra 
 	&= \tfrac{2}{\Vert{v_C}\Vert^2} \cdot  \left\la \Beta \cdot w, w\right\ra 
 	 - \tfrac{2}{\Vert{v_C}\Vert^2} \cdot \left\la \Beta \cdot  v_C, v_C \right\ra \,.
\end{align*}
Since $\Beta^+ \cdot v_C = 0$ and $\Beta^+ \cdot w = \lambda_{\Beta^+}\cdot w$, we have that $\Beta \cdot v_C = \Vert{\Beta}\Vert^2 v_C$ and 
$\Beta \cdot  w = \left( \lambda_{\Beta^+} - \Vert{\Beta}\Vert^2 \right)\cdot w$. The formula now follows from $\Vert {w}\Vert = \Vert{v_C}\Vert$.

From the $\K$-invariance of $\normmm$ it follows $\kg \cdot v_C$
lies in the kernel of the Hessian. The nonnegativity of the Hessian
on the subspace $\Vnn$ follows from its definition and
the above formula.
To prove the last assertion observe that for any $w \perp \ggo \cdot  v_C$ one has that $(d\mmm)_{v_C} \cdot w = 0$ by \eqref{def_diffmoment}. 
By Lemma \ref{lem_Vnninv} we get  $\ggo \cdot  v_C \subset  \kg \cdot  v_C + \Vnn$ and the lemma follows.
\end{proof}

Finally, we show that the strata $\sca_\Beta=\G \cdot \Vnn=\K\cdot \Vnnss \subset V\backslash \{0\}$ corresponding to the images of
critical values of the energy map $\normmm$ are smooth, embedded submanifolds.

Let $\G\times_{\Qb} \Vnnss$ be the quotient of $\G \times \Vnnss$ 
with respect to the action of $\Qb$ given by $q \cdot (g,v) = (g q^{-1}, q\cdot v)$.
Since this action is proper  and free,
  $\G \times_{\Qb}  \Vnnss$ is a smooth manifold by a classical result of Koszul (cf.~Proposition 2.2.1 in \cite{Pal61}).
This yields a well-defined, smooth, surjective and $\G$-equivariant map
\[
	\Psi : \G\times_{\Qb} \Vnnss \to \sca_\Beta\,\,;\,\,\, 
	[g, v] \mapsto g \cdot v\,.
\]
Here the action of $\G$ on $\G\times_{\Qb} \Vnnss$ is given by left 
multiplication on the $\G$ factor, 
and it commutes with the action of $\Qb$ mentioned above. Notice also that by linearity of the $\G$-action, the map $\Psi$ is $\RR_{>0}$-equivariant, where $\RR_{>0}$ acts by scalar multiplication on the second factor in $\G \times \Vnnss$ (and the action commutes with the $\Qb$-action, thus passes to the quotient).

\begin{proposition}\label{prop_scasmooth}
Let $v_C \in \sca_\Beta$ be a critical point of $\normmm$ with $\mmm(v_C) = \Beta$.  Then,
the map $\Psi : \G\times_{\Qb} \Vnnss \to \sca_\Beta$ is an embedding. 
\end{proposition}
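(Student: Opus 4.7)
The plan is to prove that $\Psi$ is injective, an immersion, and a homeomorphism onto its image; together these give the embedding property. Using the decomposition $\G = \K\Qb$ with $\K\cap\Qb = \Kb$ (Appendix~\ref{app_groups}) one obtains the identification $\G\times_{\Qb}\Vnnss \cong \K\times_{\Kb}\Vnnss$ via $[kq,v]\mapsto[k,qv]$; compactness of $\K$ will then reduce the topological embedding part to a routine properness argument once injectivity and the immersion property are established.

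For injectivity, it suffices to show that $g \in \G$ with $v, gv \in \Vnnss$ forces $g \in \Qb$. Writing $g = kq$ with $k\in\K$, $q\in\Qb$, the $\Qb$-invariance of $\Vnnss$ (Lemma~\ref{lem_WbetaQstable}) reduces the problem to showing that $k\in\K$ with $v,kv\in\Vnnss$ implies $k\in\Kb$. I would argue this by tracking the moment map: since $\mmm(p_\Beta(v))\in\Beta+\hg_\Beta$ by Lemma~\ref{lem_mmZbeta}, the analogous statement for $kv$ combined with the $\K$-equivariance $\mmm(kv) = k\,\mmm(v)\,k^{-1}$ and $\Gb$-equivariance of $p_\Beta$ (Lemma~\ref{lem_pbeta}) forces $k\Beta k^{-1}=\Beta$, hence $k\in\Kb$.

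For the immersion at $[e,v]$, writing $\ggo = \ggo_\Beta^-\oplus\qg_\Beta$ every tangent vector admits a unique representative $(A^-,w)\in\ggo_\Beta^-\oplus\Vnn$, and $d\Psi_{[e,v]}(A^-,w) = A^- v + w$. Injectivity is equivalent to the \emph{key claim}: for $v\in\Vnnss$ and $A^-\in\ggo_\Beta^-$, the condition $A^- v\in\Vnn$ forces $A^-=0$. At a critical point $v_C$ with $\mmm(v_C)=\Beta$, since $v_C\in\Vzero$ we have $A^- v_C\in\bigoplus_{s<0}V_{\Beta^+}^s$, so $A^- v_C\in\Vnn$ is equivalent to $A^- v_C=0$. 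Reducing to $A^-\in\ggo_s$ with $s<0$, its Cartan components $A^-_{\kg}, A^-_{\pg}$ satisfy $[\Beta,A^-_{\kg}]=sA^-_{\pg}$ and $[\Beta,A^-_{\pg}]=sA^-_{\kg}$ (from $[\pg,\kg]\subset\pg$, $[\pg,\pg]\subset\kg$, and the eigenvalue equation); together with \eqref{eqn_assumipggo} this yields $\la\Beta,[A^-,(A^-)^t]\ra=2s\Vert A^-_{\pg}\Vert^2\leq 0$. On the other hand, since $[A^-,(A^-)^t]\in\pg$ and $\mmm(v_C)=\Beta$,
\[
 0\leq \Vert(A^-)^t v_C\Vert^2 = \la [A^-,(A^-)^t] v_C,v_C\ra = \Vert v_C\Vert^2\,\la\Beta,[A^-,(A^-)^t]\ra = 2s\Vert v_C\Vert^2\Vert A^-_{\pg}\Vert^2,
\]
forcing $A^-_{\pg}=0$; then $A^-=A^-_{\kg}$ lies in $\kg\cap\ggo_s=0$, since any antisymmetric $X\in\ggo_s$ satisfies $X^t=-X\in\ggo_s$ and also $X^t\in\ggo_{-s}$, giving $X=0$ for $s\neq 0$.

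Extending the key claim from critical points to arbitrary $v\in\Vnnss$ is the \emph{main obstacle}. My plan is to exploit Corollary~\ref{cor_closureGmu}, which attaches to each $v\in\Vnnss$ a critical point in the closure of $(\RR_{>0}\cdot\G)\cdot v$ with moment map value exactly $\Beta$; combined with the fact that the $\Gb$-action preserves each eigenspace $\ggo_s$ of $\ad(\Beta)$ (so the condition $A^-v\in\Vnn$ is $\Gb$-equivariant in the sense $A^-v\in\Vnn$ iff $(\Ad(g^{-1})A^-)(g^{-1}v)\in\Vnn$ for $g\in\Gb$), a limiting argument along the retraction $\exp(-t\Beta^+) \colon \Vnnss \to \Vzeross$ should propagate the claim from the critical locus to the full semistable locus. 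Once $\Psi$ is an injective immersion, the embedding property follows immediately: under the identification $\G\times_{\Qb}\Vnnss\cong\K\times_{\Kb}\Vnnss$, compactness of $\K$ makes the induced map $[k,v]\mapsto k\cdot v$ proper, hence a closed embedding onto $\sca_\Beta$.
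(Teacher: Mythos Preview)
Your injectivity argument has a genuine gap. You reduce correctly to showing that $k\in\K$ with $v,kv\in\Vnnss$ forces $k\in\Kb$, and then invoke $\mmm(p_\Beta(v)),\,\mmm(p_\Beta(kv))\in\Beta+\hg_\Beta$ together with $\K$-equivariance of $\mmm$ and $\Gb$-equivariance of $p_\Beta$. But these ingredients cannot be combined to yield $k\Beta k^{-1}=\Beta$: since $p_\Beta$ is only $\Gb$-equivariant, there is no relation between $p_\Beta(kv)$ and $k\cdot p_\Beta(v)$ when $k\notin\Gb$, and the $\K$-equivariance of $\mmm$ controls $\mmm(kv)$, not $\mmm(p_\Beta(kv))$. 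Knowing that two unrelated vectors in $\Vzero$ both have moment map in the affine subspace $\Beta+\hg_\Beta$ places no constraint whatsoever on $k$. The paper's route is quite different and does not attempt a direct argument: it first uses the immersion property to obtain \emph{local} injectivity near every critical point with moment-map value $\Beta$; then, given a hypothetical $v\in\Vnnss$ and $k\in\K\setminus\Kb$ with $kv\in\Vnnss$, it applies Corollary~\ref{cor_closureGmu} to $kv$ and carefully rewrites the resulting sequence $c_s q_s(kv)\to\bar v_C'$ through repeated $\K\Qb$-decompositions (of $g_s$ and then of $q_s k$), manufacturing elements $\hat k_s\in\K\setminus\Kb$ with $\hat k_s\to e$ and two distinct points $[\hat k_s,v_s]\neq[e,\hat k_s v_s]$ in $\G\times_{\Qb}\Vnnss$ having the same $\Psi$-image and both converging to $[e,\bar v_C']$ --- contradicting local injectivity there.

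On the positive side, your immersion argument at $v_C$ is correct and arguably cleaner than the paper's. The identity $\Vert(A^-)^t v_C\Vert^2=\Vert v_C\Vert^2\,\la\Beta,[A^-,(A^-)^t]\ra=2s\,\Vert v_C\Vert^2\,\Vert A^-_{\pg}\Vert^2$ for $A^-\in\ggo_s$, $s<0$, combined with $\kg\cap\ggo_s=0$, is a nice direct replacement for the paper's somewhat elliptical step (reducing to $A\in\kg$ via $\ggo=\kg+\qg_\Beta$ and then asserting $\exp(tA)\cdot v_C\in\Vnn$ for all $t$ in order to feed Lemma~\ref{lem_estimatemm}). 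Your extension of the immersion property to general $v\in\Vnnss$ via $(\RR_{>0}\times\G)$-equivariance and Corollary~\ref{cor_closureGmu} is exactly the paper's mechanism, so that part is fine once spelled out.
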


\begin{proof}
We first prove that $\Psi$ is an immersion at $[e,v_C]$. To that end, it suffices to show that the kernel of the differential of the map $\hat \Psi : \G \times \Vnnss \to \sca_\Beta$, $(g,v) \mapsto g\cdot v$, is given by the tangent space to the $\Qb$-orbit $\Qb \cdot (e,v_C)$ . 
By linearity of the action, the latter is given by 
\[
	T_{(e,v_C)} \, (\Qb \cdot (e,v_C)) = \big\{ (A, - A \cdot v_C) : A \in \qg_\Beta \big\}.
\]
On the other hand, the differential is given by
\[
	{\rm d} \hat \Psi|_{(e,v_C)} (A, w) = A \cdot v_C + w,
\]
and this vanishes if and only if $(A, w) = (A, - A \cdot v_C)$, where $w \in \Vnn$.

Suppose that there exists $A \in \kg$ with $A \cdot v_C \in \Vnn$.	
Then $\exp(t A)\cdot v_C \in \Vnn$ and  $\exp(tA) \in \K$
for all $t \in \RR$. The $\K$-equivariance of the moment map $\mmm$ implies that
$\Vert \mmm(\exp(tA)\cdot v_C)\Vert \equiv \Vert \mmm(v_C)\Vert=\Vert \Beta \Vert$. Since $\exp(tA)\cdot v_C \in \Vnnss \subset  \Vnn$, we deduce
from Lemma \ref{lem_estimatemm} that $\mmm(\exp(tA)\cdot v_C)=\Beta$
for all $t \in \RR$. Thus
$\mmm(\exp(tA)\cdot v_C)= \exp(tA)\cdot \Beta \cdot \exp(-tA)$ for all $t \in \RR$.
Differentiating show $A \in \kg_\Beta \subset \qg_\beta$. This shows the above claim.

In order to show that it is an immersion at any point, we recall that $\Psi$ is $\G$-equivariant and $\RR_{>0}$-equivariant, and apply Corollary \ref{cor_closureGmu}.

In the second step we show that $\Psi$ is injective. By the above, there exists an open neighbourhood 
$\Omega$ of $[e,v_C]$ in $\G \times_{\Qb} \Vnnss$ such that $\Psi|_\Omega$ is injective. 
If $\Psi$ is not globally injective, then there exists $v \in \Vnnss$ and 
$g\in \G \backslash \Qb$ such that $g \cdot v \in \Vnnss$. 
By $\G=\K \Qb$, we may assume that $g = k \in \K \backslash \Kb$. 
Using Corollary \ref{cor_closureGmu}, let $(c_s)\subset \RR_{>0}$ and $(g_s)\subset \G$ be sequences with $g_s = k_s q_s$, $k_s\in \K$, $q_s\in \Qb$, such that $c_s g_s \cdot (k\cdot v) \to \overline{v_C}$, where $\overline{v_C} \in \Vnn$ 
is a critical point of $\normmm$ with $\mmm(\overline{v_C}) = \Beta$.
By Lemma \ref{lem_estimatemm} we deduce $\overline{v_C} \in \Vzero$
and by Lemma \ref{lem_equivWbeta} even $\overline{v_C} \in \Vzeross$.

By compactness of $\K$ we may assume that 
\[
	v_s := c_s q_s \cdot (k\cdot v) \to \overline{v_C}', 
\]
where $\overline{v_C}' \in \K \cdot \overline{v_C}$ is another critical point. Since 
$k \cdot v \in \Vnn$, by Lemma \ref{lem_Vnninv}
$v_s \in \Vnn$ for all $s$,  thus $\overline {v_C}' \in \Vnn$.
 It follows that $\mmm(\overline{v_C}') = \Beta$ 
by Lemma \ref{lem_estimatemm} and $\overline{v_C}' \in \Vzeross$, as above. 
We now write $q_s k = \tilde k_s \tilde q_s$ with $\tilde k_s \in \K$, $\tilde q_s \in \Qb$, and assume that $\tilde k_s \to \tilde k_\infty \in \K$ as \mbox{$s\to \infty$}. By setting $\hat v_C := \tilde k_\infty^{-1} \cdot \overline {v_C}'$, also a critical point, we have that $c_s \tilde q_s \cdot v \to \hat v_C \in \Vnn$  and $\mmm(\hat v_C) = \Beta$ again by Lemma \ref{lem_estimatemm}. In particular, $\tilde k_\infty^{-1} \in \K_\Beta=\Gb \cap \K$ by $\K$-equivariance of $\mmm$ and the very definition of $\Gb$.
Now if we put $\hat k_s := \tilde k_\infty \tilde k_s^{-1} \notin \Kb$, it satisfies $\hat k_s \to \Id$ and $\hat k_s \cdot v_s \to \overline{v_C}'$, or in other words, $[\hat k_s, v_s] \to [e,\overline{v_C}']$. Since $\Psi([\hat k_s, v_s]) = \Psi([e, \hat k_s \cdot v_s])$ and $\hat k_s \notin \Qb$, this contradicts the injectivity of $\Psi$ near 
$[e,\overline{v_C}']=\lim_{s \to \infty}\Psi([e,v_s])$.


In order to show that $\Psi$ is an embedding it remains to show that $\Psi$
is a proper map. So let $([g_i,v_i])_{i\in \NN}$ be a sequence in  $\G \times_{\Qb} \Vnnss$ with
$\Psi([g_i,v_i])\to \Psi([g,v])=g v$ for $i \to \infty$. 
We write $g_i=k_iq_i$
with $k_i \in \K$ and $q_i \in \Qb$. Assuming that $k_i \to k$ for $i\to \infty$
we deduce $q_iv_i \to k^{-1}gv$ for $i \to \infty$.
Thus $[q_i,v_i]=[k_i,q_iv_i]\to [k,k^{-1}gv]$
for $i \to \infty$. This shows the claim.
\end{proof}

An immediate consequence is the following characterization of the parabolic subgroup $\Qb$ (cf.~ Lemma \ref{lem_WbetaQstable}):

\begin{corollary}\label{cor_QbVnnss}
For $v\in \Vnnss$, $g\in \G$ we have that $g\cdot v \in \Vnnss$ if and only if $g\in \Qb$.
\end{corollary}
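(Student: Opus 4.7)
The ``if'' direction is immediate from Lemma \ref{lem_WbetaQstable}, which asserts that $\Vnnss$ is $\Qb$-invariant. So the content of the corollary is the ``only if'' direction: given $v, g\cdot v \in \Vnnss$ with $g\in \G$, we must show $g\in \Qb$.

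The plan is to exploit the injectivity of $\Psi : \G\times_{\Qb}\Vnnss \to \sca_\Beta$ established in Proposition \ref{prop_scasmooth}. Observe that since $v\in \Vnnss$, the class $[g,v]$ is a well-defined element of $\G\times_{\Qb}\Vnnss$; and since $g\cdot v\in \Vnnss$, the class $[e,\, g\cdot v]$ is also well-defined. Both classes satisfy
\[
    \Psi([g,v]) \;=\; g\cdot v \;=\; \Psi([e,\,g\cdot v]).
\]
By injectivity of $\Psi$, we obtain $[g,v] = [e,\,g\cdot v]$ in $\G\times_{\Qb}\Vnnss$, which by definition of the quotient means that there exists $q\in \Qb$ with $(e,\,g\cdot v) = (g q^{-1},\, q\cdot v)$. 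Reading off the first coordinate gives $g = q \in \Qb$, as desired.

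There is essentially no obstacle here, since the hard work is already contained in Proposition \ref{prop_scasmooth}; the corollary is a direct bookkeeping consequence of the injectivity of $\Psi$ together with the definition of the associated bundle $\G\times_{\Qb}\Vnnss$.
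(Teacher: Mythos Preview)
Your proof is correct and follows exactly the same approach as the paper: both directions are handled identically, with the ``if'' direction coming from Lemma \ref{lem_WbetaQstable} and the ``only if'' direction obtained by applying the injectivity of $\Psi$ from Proposition \ref{prop_scasmooth} to the classes $[g,v]$ and $[e,g\cdot v]$.
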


\begin{proof}
If $g\cdot v \in \Vnnss$ then $[g,v]$, $[e, g\cdot v] \in \G \times_{\Qb} \Vnnss$ and $\Psi([g,v]) = \Psi([e, g\cdot v])$, thus $g\in \Qb$ by injectivity. The converse is clear by Lemma \ref{lem_WbetaQstable}.
\end{proof}

Another application is the fact that the stratum fibers over the compact homogeneous space $\K / \Kb$:

\begin{corollary}\label{cor_Kbfiberbundle}
The map $\Psi_\K : \K \times_{\Kb} \Vnnss \to \sca_\Beta$, $[k,v] \mapsto k\cdot v$, is a diffeomorphism. 
\end{corollary}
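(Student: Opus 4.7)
The plan is to factor $\Psi_\K$ through the diffeomorphism $\Psi$ from Proposition \ref{prop_scasmooth}. Since $\Kb = \Gb \cap \K \subset \Qb$, the inclusion $\K \hookrightarrow \G$ descends to a natural smooth map
\[
    \iota : \K \times_{\Kb} \Vnnss \to \G \times_{\Qb} \Vnnss, \qquad [k,v] \mapsto [k,v],
\]
with $\Psi_\K = \Psi \circ \iota$. The space $\K\times_{\Kb}\Vnnss$ is a smooth manifold (the $\Kb$-action is free and proper since $\Kb$ acts freely on $\K$), and $\Psi$ is a diffeomorphism onto $\sca_\Beta$ (it is an embedding by Proposition \ref{prop_scasmooth} and surjective by the definition of $\sca_\Beta$). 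Thus it suffices to show $\iota$ is a diffeomorphism.

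For surjectivity of $\iota$: given $[g,v] \in \G\times_{\Qb}\Vnnss$, use $\G = \K\Qb$ (Lemma \ref{lem_groupsGGbeta}) to write $g = kq$ with $k\in \K$, $q\in \Qb$; then $[g,v] = [k, q\cdot v] = \iota([k,q\cdot v])$, with $q\cdot v \in \Vnnss$ by Lemma \ref{lem_WbetaQstable}. For injectivity: if $\iota([k,v]) = \iota([k',v'])$, there is $q\in \Qb$ with $k' = kq^{-1}$ and $v' = q\cdot v$, so $q = k^{-1}k' \in \K\cap \Qb$. The crux is therefore to establish $\K\cap \Qb = \Kb$.

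To prove $\K\cap \Qb = \Kb$, let $k \in \K\cap \Qb$ and write $k = hu$ with $h\in\Gb$, $u\in \Ub$. Writing $u = \exp(X)$ for $X\in \ug_\Beta$ and expanding $\Ad(u)\Beta = e^{\ad X}\Beta$, every term beyond $\Beta$ lies in $\ug_\Beta$ (since $\ug_\Beta$ is $\ad\ug_\Beta$-invariant), and similarly $\Ad(h)$ preserves $\ug_\Beta$ because $\Gb$ preserves the $\ad\Beta$-eigenspace decomposition. Hence $\Ad(k)\Beta = \Beta + Z$ with $Z \in \ug_\Beta$. On the other hand, by \eqref{eqn_assumipggo} $\ad(\kg)\subset \sog(\ggo,\ip)$ so $\|\Ad(k)\Beta\| = \|\Beta\|$, and since $\ad\Beta$ is symmetric on $\ggo$ we have $\ggo_0 \perp \ug_\Beta$, in particular $\Beta\perp Z$. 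The Pythagoras identity $\|\Beta\|^2 + \|Z\|^2 = \|\Beta\|^2$ forces $Z=0$, so $k\in \Gb\cap \K = \Kb$.

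It remains to check that $\iota^{-1}$ is smooth, for which we verify that $d\iota$ is bijective at some (hence every, by equivariance) point. At $[e,v]$ the differential is the map $\kg/\kg_\Beta \oplus T_v\Vnnss \to \ggo/\qg_\Beta \oplus T_v\Vnnss$ induced by the inclusion $\kg \hookrightarrow \ggo$, which is injective because the Lie algebra analogue $\kg\cap \qg_\Beta = \kg_\Beta$ follows at once from closure of $\kg$ under transpose and the direct sum $\ggo = \ug_\Beta^- \oplus \ggo_\Beta \oplus \ug_\Beta$. A dimension count using $\dim \G = \dim\K + \dim\Qb - \dim\Kb$ (a consequence of $\G = \K\Qb$ with $\K\cap\Qb = \Kb$) shows the two quotient manifolds have equal dimension, whence $d\iota$ is a bijection, $\iota$ is a local diffeomorphism, and being a bijection, a diffeomorphism. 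The main obstacle is the identity $\K\cap\Qb=\Kb$; once it is in hand, the rest is a standard factorization argument.
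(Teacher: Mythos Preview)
Your proof is correct and follows the natural route the paper intends: factor $\Psi_\K$ through the embedding $\Psi$ of Proposition~\ref{prop_scasmooth} via the map $\iota$, and use $\G=\K\Qb$ together with $\K\cap\Qb=\Kb$ to show $\iota$ is a bijection with invertible differential. The paper states the corollary without proof, regarding it as an immediate consequence of Proposition~\ref{prop_scasmooth} and Lemma~\ref{lem_groupsGGbeta}.

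One remark: you identify $\K\cap\Qb=\Kb$ as ``the crux'' and give an independent argument for it, but this identity is already recorded in Lemma~\ref{lem_groupsGGbeta} (proved there via $\Qb\cap\Qb^t=\Gb$). Your alternative proof via $\Ad(k)\Beta=\Beta+Z$ with $Z\in\ug_\Beta$ and the Pythagorean identity is perfectly valid and arguably more direct, but it is not needed here --- you may simply cite Lemma~\ref{lem_groupsGGbeta} and shorten the argument considerably.
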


\section{Applications}\label{sec_applications}

We collect some applications of the above results. The first is only a restatement
of Lemma \ref{lem_estimatemm} but very important in applications since
it gives non-trivial estimates on strata $\sca_\Beta$ with $\Beta \neq 0$.

\begin{lemma}\label{lem_mainestimate}
For $v \in \Vnn \backslash \{0 \}$ we have that
\[
\Vert{\mmm(v)}\Vert^2 \geq \la \mmm(v), \Beta\ra \geq \Vert{\Beta}\Vert^2
\]
 with equality if and only if $v\in \Vzero$ is a critical point of $\normmm$ with $\mmm(v) = \Beta$.
\end{lemma}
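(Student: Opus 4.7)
The plan is to exploit the fact that this lemma is a direct refinement of Lemma \ref{lem_estimatemm}: the middle inequality $\la \mmm(v),\Beta\ra \geq \Vert\Beta\Vert^2$ was essentially proved there, and the outer inequality $\Vert\mmm(v)\Vert^2 \geq \la\mmm(v),\Beta\ra$ will follow from Cauchy--Schwarz combined with the middle one.

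First I would decompose $v \in \Vnn\setminus\{0\}$, which without loss of generality I take to have $\Vert v\Vert = 1$, as $v = \sum_{r\geq 0} v_r$ with $v_r \in V_{\Beta^+}^r$. Since $\Beta^+ = \Beta - \Vert\Beta\Vert^2\Id$, this gives $\Beta \cdot v_r = (r + \Vert\Beta\Vert^2)\,v_r$, hence by the definition of the moment map
\[
 \la \mmm(v),\Beta\ra \;=\; \la \Beta\cdot v, v\ra \;=\; \sum_{r\geq 0}(r + \Vert\Beta\Vert^2)\Vert v_r\Vert^2 \;\geq\; \Vert\Beta\Vert^2,
\]
with equality if and only if $v_r = 0$ for all $r>0$, i.e.~$v \in \Vzero$.

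For the first inequality, note that the middle inequality gives in particular $\la \mmm(v),\Beta\ra > 0$. Cauchy--Schwarz then yields
\[
 \la \mmm(v),\Beta\ra \;\leq\; \Vert \mmm(v)\Vert\cdot\Vert \Beta\Vert \;\leq\; \Vert\mmm(v)\Vert\cdot\tfrac{\la\mmm(v),\Beta\ra}{\Vert\Beta\Vert},
\]
so $\Vert\mmm(v)\Vert \geq \Vert\Beta\Vert$, and therefore $\Vert\mmm(v)\Vert^2 \geq \Vert\mmm(v)\Vert\cdot\Vert\Beta\Vert \geq \la \mmm(v),\Beta\ra$. Equality throughout forces equality in Cauchy--Schwarz, i.e.~$\mmm(v)$ is a nonnegative multiple of $\Beta$; combined with equality in the middle (which forces $v \in \Vzero$ and $\la\mmm(v),\Beta\ra = \Vert\Beta\Vert^2$), this forces $\mmm(v) = \Beta$.

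Finally, by Lemma \ref{lem_mmZbeta}, any $v \in \Vzero\setminus\{0\}$ with $\mmm(v) = \Beta$ is automatically a critical point of $\normmm$, which yields the equality characterization claimed. No step here is a real obstacle, since the whole argument is simply a careful repackaging of the eigenspace computation and Cauchy--Schwarz already carried out in Lemma \ref{lem_estimatemm}.
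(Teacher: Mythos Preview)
Your proof is correct and follows exactly the argument the paper uses: the paper states that this lemma is ``only a restatement of Lemma \ref{lem_estimatemm}'', and the proof of that lemma is precisely the eigenspace decomposition plus Cauchy--Schwarz chain you wrote out (indeed, the inequality $\Vert\mmm(v)\Vert^2 \geq \la \mmm(v),\Beta\ra \geq \Vert\Beta\Vert^2$ already appears verbatim in that proof). One tiny remark: your Cauchy--Schwarz step divides by $\Vert\Beta\Vert$ and by $\la\mmm(v),\Beta\ra$, so it tacitly assumes $\Beta\neq 0$; the case $\Beta=0$ is of course trivial and worth noting in one word.
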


The second application gives information about the isotropy subgroups
\[
  \G_v := \{ g\in \G : g\cdot v = v\}. 
\]

\begin{corollary}\label{cor_autmu}
For any $v \in \Vnnss$ we have $\G_v  \subset \Hb \Ub$ and
$  \G_v \cap \K \subset \Kb$.
\end{corollary}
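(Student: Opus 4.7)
The plan is to combine the just-proved Corollary \ref{cor_QbVnnss} with the semistability hypothesis on $p_\Beta(v)$. Given $g \in \G_v$, since $v \in \Vnnss$ and $g\cdot v = v \in \Vnnss$, Corollary \ref{cor_QbVnnss} immediately places $g$ inside the parabolic $\Qb = \Gb\Ub$. Using the direct product decomposition $\Gb = \exp(\RR\Beta)\times \Hb$ noted after Definition \ref{def_groups2}, I write $g = \exp(t\Beta)\, h\, u$ with $t \in \RR$, $h \in \Hb$, $u \in \Ub$; it then suffices to show $t = 0$.

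To do this, I would apply the projection $p_\Beta : \Vnn \to \Vzero$ to the identity $g \cdot v = v$. By Lemma \ref{lem_pbeta}, $p_\Beta$ is $\Gb$-equivariant and its fibres are $\Ub$-invariant, so the identity descends to
\[
  \exp(t\Beta)\cdot h \cdot p_\Beta(v) = p_\Beta(v).
\]
Since $\Beta^+$ vanishes on $\Vzero$, the element $\exp(t\Beta)$ acts there as multiplication by $e^{t\Vert\Beta\Vert^2}$. Hence $h \cdot p_\Beta(v) = e^{-t\Vert\Beta\Vert^2}\, p_\Beta(v)$. Assuming $\Beta \neq 0$ (otherwise the statement is trivial, as $\Ub = \{e\}$, $\Hb = \G$ and $\Kb = \K$), iterating gives $h^n \cdot p_\Beta(v) = e^{-nt\Vert\Beta\Vert^2}\, p_\Beta(v)$, which tends to $0$ either as $n\to +\infty$ (if $t>0$) or $n\to-\infty$ (if $t<0$), contradicting $p_\Beta(v) \in \Vzeross$. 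Therefore $t=0$ and $g \in \Hb\Ub$.

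For the second assertion, take $g = hu \in \G_v \cap \K$ with $h \in \Hb$ and $u \in \Ub$. I would show $\Ad(g)\Beta = \Beta$, which by definition of $\Gb$ forces $g \in \Gb \cap \K = \Kb$. On one hand, $\Ad(u)\Beta \in \Beta + \ug_\Beta$ (since $[\ug_\Beta,\Beta]\subset \ug_\Beta$), and $\Ad(h)$ preserves both $\Beta$ (as $h \in \Gb$) and $\ug_\Beta$, so $\Ad(g)\Beta \in \Beta + \ug_\Beta$. On the other hand, $\Ad(g)\Beta \in \pg$ because $g \in \K$ normalises $\pg$. The proof then reduces to the identity $\ug_\Beta \cap \pg = \{0\}$, which is immediate: if $A \in \ggo_r$ with $r>0$ then $A^t \in \ggo_{-r}$ (since $\Beta^t=\Beta$ gives $[\Beta,A^t]=-[\Beta,A]^t$), so any symmetric element of $\ug_\Beta$ would lie in $\bigoplus_{r>0}\ggo_r \cap \bigoplus_{r<0}\ggo_r = 0$.

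The main obstacle — really the crux of the whole argument — is the semistability step in the first paragraph: it is precisely the hypothesis $p_\Beta(v)\in \Vzeross$ that prevents a nontrivial scalar contraction of its $\Hb$-orbit and thereby forces $t=0$. Once this is handled, the remaining $\K$-statement is a routine normaliser computation using that $\pg$ meets $\ug_\Beta$ trivially.
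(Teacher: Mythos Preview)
Your argument is correct and the first part matches the paper's almost verbatim: both use injectivity of $\Psi$ (you via Corollary~\ref{cor_QbVnnss}, the paper directly via Proposition~\ref{prop_scasmooth}) to place $g$ in $\Qb$, then project by $p_\Beta$, use that $\exp(t\Beta)$ acts on $\Vzero$ as scalar multiplication by $e^{t\Vert\Beta\Vert^2}$, and exploit $p_\Beta(v)\in\Vzeross$ to rule out a nontrivial $\exp(\RR\Beta)$-component.

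For the second assertion the paper takes a shorter route: having already shown $\G_v\subset\Hb\Ub\subset\Qb$, it simply cites $\K\cap\Qb=\Kb$ from Lemma~\ref{lem_groupsGGbeta}. Your direct computation that $\Ad(g)\Beta\in(\Beta+\ug_\Beta)\cap\pg=\{\Beta\}$ is correct, but it amounts to re-proving that lemma (whose proof in the appendix goes via $\Qb\cap\Qb^t=\Gb$, which is the same observation that $\ug_\Beta\cap\pg=0$ packaged slightly differently).
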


\begin{proof}
Let $\varphi\in \G_v$. Then $\Psi([\varphi, v]) = v = \Psi([e, v])$.
By the injectivity of $\Psi$ (see Proposition \ref{prop_scasmooth}) we deduce 
$[\varphi,v]=[e,v]$, thus $\varphi \in \Qb=\Gb\Ub$. 
Hence $\varphi = \varphi_\Beta \varphi_u$ with 
 $\varphi_\Beta \in \Gb$ and $\varphi_u \in \Ub$. 
 Let now $w := p_\Beta(v)$, where $p_\Beta : \Vnn \to \Vzero$
 denotes the orthogonal projection. Then
 we have $w \in \Vzeross$, since $\Vnnss=p_{\Beta}^{-1}(\Vzeross)$, and
 $\varphi_\Beta \in \G_w$ by Lemma \ref{lem_pbeta}. 
We write  $\varphi_\Beta = \varphi_{H} \exp(a \Beta)$ with
$\varphi_H \in \Hb$ and $a\in \RR$. 
Recall now that $\Vzeross \subset \Vzero$ by Def.~ \ref{def_Sbeta}
and that  $\exp({-t \beta^+})$ acts trivially  on $\Vzero$
for any $t\in \RR$. Thus,
\[
    (\varphi_H)^n \cdot w = \exp(- n a \Beta) \cdot w =  \left(\exp(na \Vert\beta\Vert^2\Id) \exp(-na \beta^+) \right) \cdot w = e^{n a \Vert \Beta\Vert^2} w,
\]
for all $n\in\ZZ$. It follows that $a=0$, since 
otherwise we would have $0 \in \overline{\Hb \cdot w}$ contradicting  $w \in \Vzeross$ and the very definition of $\Vzeross$.

The last assertion follows from the fact that $\K \cap \Qb  = \Kb$: see
 Lemma \ref{lem_groupsGGbeta}.
\end{proof}




The next application shows that the semi-stable vectors are precisely the stratum corresponding to $\Beta = 0$:

\begin{corollary}\label{cor_semistablesca0}
We have that $V \backslash \nca = \sca_0$. 
\end{corollary}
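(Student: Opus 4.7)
The plan is essentially an unwinding of Definition \ref{def_Sbeta} in the case $\Beta = 0$, showing that all the relevant objects collapse to their most tautological versions.

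First I would observe that for $\Beta = 0$ we have $\Beta^+ = \Beta - \Vert \Beta\Vert^2 \Id = 0$ as an endomorphism of $V$. Hence every vector lies in the kernel and in the non-negative part of the spectrum, so $\Vzero = \Vnn = V$. The orthogonal projection $p_\Beta : \Vnn \to \Vzero$ is in this case simply $\Id_V$.

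Next I would identify the real reductive subgroup $\Hb$ of Definition \ref{def_groups2} when $\Beta = 0$. Since $\ad(0) = 0$, one has $\ggo_\Beta = \ker \ad(\Beta) = \ggo$, so $\Gb = \G$. Moreover the condition $\la A, \Beta \ra = 0$ defining $\hg_\Beta$ is automatic, giving $\hg_\Beta = \ggo$, and therefore $\Hb = \K \exp(\pg) = \G$.

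With these specializations, Definition \ref{def_Sbeta} reads
\[
    \Vzeross = \big\{ v \in V : 0 \notin \overline{\G \cdot v}\, \big\} = V \setminus \nca,
\]
and then $\Vnnss = p_\Beta^{-1}(\Vzeross) = V \setminus \nca$. Finally, because the null cone is manifestly $\G$-invariant (if $0 \in \overline{\G \cdot v}$ then $0 \in \overline{\G \cdot (g \cdot v)}$ for all $g \in \G$), so is its complement, and therefore
\[
    \sca_0 = \G \cdot \Vnnss = \G \cdot (V \setminus \nca) = V \setminus \nca.
\]

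There is no real obstacle here: the corollary is a definitional unraveling, and the only mild point is to verify that the subgroup $\Hb$ at $\Beta = 0$ is all of $\G$, which is immediate from the two defining conditions collapsing.
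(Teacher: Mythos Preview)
Your argument is correct and, in fact, considerably simpler than the paper's own proof. You exploit directly that Definition~\ref{def_Sbeta} degenerates at $\Beta = 0$: since $\Beta^+ = 0$ one has $\Vzero = \Vnn = V$ and $p_\Beta = \Id_V$, and since $\hg_\Beta = \ggo$ one has $\Hb = \G$, so that $\Vzeross = V\setminus\nca$ literally by the definition of $\nca$, and $\G$-invariance of the null cone finishes the job.

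The paper instead argues via the analytic machinery already developed for Theorem~\ref{thm_stratif}. For $V\setminus\nca \subset \sca_0$ it invokes Theorem~\ref{thm_realGIT} to produce a minimal vector $\bar v \in \overline{\G\cdot v}$ (hence a critical point with $\mmm(\bar v)=0$, so $\bar v \in \sca_0$), and then appeals to part~(ii) of Theorem~\ref{thm_stratif} to pull $v$ itself into $\sca_0$. For the reverse inclusion it argues by contradiction, using closedness of $\nca$ and the gradient-flow characterization of the strata (part~(iii) of Theorem~\ref{thm_stratif}) to trap a critical point with $\mmm(v_C)=0$ inside $\nca$, which is impossible. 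Your approach sidesteps all of this: it needs no gradient flow, no Hessian information, and no part of Theorem~\ref{thm_stratif}. What the paper's route illustrates, on the other hand, is that the equality is also forced by the \emph{dynamical} description of the strata (as unstable sets of the energy functional), not merely by their algebraic definition---but for the bare statement of the corollary your unraveling is the cleanest proof.
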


\begin{proof}
If $v \in V \backslash \nca$, then by Theorem \ref{thm_realGIT}
there exists a minimal vector $ \bar v\in \overline{\G \cdot v}\cap  \sca_0 $, hence
$v \in \sca_0$ by part (ii) of Theorem \ref{thm_stratif}.
To see that $\sca_0 \subset V \backslash \nca$ notice that if $v \in \sca_0 \cap \nca$,
then also the cone $C(\G \cdot v)$ over $\G\cdot v$ is contained in $\nca$, and
by part (iv) of Theorem \ref{thm_realGIT} the same is true for its closure.
Using  that  by Lemma \ref{lem_gradient} 
the gradient of $\normmm$ is tangent to $C(\G \cdot v)$,  we deduce  from part (iii) 
of Theorem \ref{thm_stratif} that for the limit $v_C$ of the negative gradient flow of $\normmm$ starting at $v_0$ we have that $v_C \in \sca_0 \cap \nca$.
Thus, $\mmm(v_C)=0$ and hence $v_C \in V\backslash \nca$,
a contradiction. 
\end{proof}

Finally, our last application of the Stratification Theorem \ref{thm_stratif} generalizes the uniqueness (up to $\K$-action) of zeros of the moment map within the closure of an orbit (Theorem \ref{thm_realGIT}, (i) and (iii)) to critical points of $\normmm$ of higher energy. To that end, one has to restrict to critical points lying in the same stratum, since in general the closure of an orbit may contain several $\K$-orbits of critical points, lying however in strata of higher energy. Let us also mention that this by no means implies that on each stratum there is a unique $\K$-orbit of critical points: these typically come in families of several continuous parameters. 

\begin{corollary}
Assume that $\Id_V \in \ggo$, and let $v\in \sca_\Beta$. Then, there exists 
a critical point $v_C \in \overline{\G \cdot v} \cap \sca_\Beta$ 
of $\normmm$ of unit norm, which is unique up to the action of $\K$.
\end{corollary}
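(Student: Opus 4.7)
The plan is to separate existence from uniqueness and to exploit the hypothesis $\Id_V\in\ggo$, which gives both $\Beta^+=\Beta-\Vert\Beta\Vert^2\Id_V\in\pg$ (hence $\exp(-t\Beta^+)\in\G$) and $\RR_{>0}\subset\G$. For existence, by Lemma~\ref{lem_WbetaQstable} I will first replace $v$ by a $\K$-translate so that $v\in\Vnnss$. Then $w:=p_\Beta(v)=\lim_{t\to\infty}\exp(-t\Beta^+)\cdot v$ lies in $\Vzeross\cap\overline{\G\cdot v}$ by Lemma~\ref{lem_pbeta}. Applying Theorem~\ref{thm_realGIT} to the real reductive action of $\Hb$ on $\Vzero$, $\overline{\Hb\cdot w}$ contains a unique closed $\Hb$-orbit $\Hb\cdot w_0$ whose minimal vectors form a single $\Kb$-orbit; by Lemma~\ref{lem_mmZbeta}, $\mmm(w_0)=\Beta$, so $w_0$ is a critical point of $\normmm$, and $v_C:=w_0/\Vert w_0\Vert$ is the desired unit-norm critical point in $\overline{\G\cdot v}\cap\sca_\Beta$.

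For uniqueness, let $v_C'\in\overline{\G\cdot v}\cap\sca_\Beta$ be another unit-norm critical point. By Theorem~\ref{thm_stratif}(iii), $\mmm(v_C')\in\K\cdot\Beta$, so after a $\K$-action I may assume $\mmm(v_C')=\Beta$, forcing $v_C'\in\Vzeross$ (Lemma~\ref{lem_equivWbeta}). Writing $v_C'=\lim g_n\cdot v$ and decomposing $g_n=k_n q_n\in\K\Qb$, after a subsequence $k_n\to k_\infty\in\K$ and $q_n\cdot v\to k_\infty^{-1}v_C'\in\Vnn$. Since $\Vert\mmm(k_\infty^{-1}v_C')\Vert=\Vert\Beta\Vert$, the equality case of Lemma~\ref{lem_estimatemm} forces $\mmm(k_\infty^{-1}v_C')=\Beta$, hence $k_\infty\in\Kb$, so after a further $\Kb$-action I may assume $v_C'=\lim q_n\cdot v$ with $q_n\in\Qb$. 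Decomposing $q_n=h_n\exp(a_n\Beta)u_n\in\Hb\cdot\exp(\RR\Beta)\cdot\Ub$ and applying $p_\Beta$, which is $\Gb$-equivariant and $\Ub$-invariant (Lemma~\ref{lem_pbeta}), together with $\exp(a_n\Beta)|_{\Vzero}=e^{a_n\Vert\Beta\Vert^2}\Id$, yields
\[
c_n h_n\cdot w\to v_C'\text{ in }\Vzero,\quad\text{with }c_n:=e^{a_n\Vert\Beta\Vert^2}>0.
\]
When $c_n\to c_\infty\in(0,\infty)$ along a subsequence, $h_n\cdot w\to v_C'/c_\infty\in\overline{\Hb\cdot w}$; since this limit is a scalar multiple of the unit minimal vector $v_C'$, Theorem~\ref{thm_realGIT}(i),(iii) applied to $\Hb$ on $\Vzero$ forces $v_C'/c_\infty\in\Kb\cdot w_0$, whence $v_C'\in\Kb\cdot v_C$ after normalizing. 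The subcase $c_n\to\infty$ is excluded by $\Vert h_n w\Vert\geq\Vert w_0\Vert>0$.

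The main obstacle will be ruling out $c_n\to 0$, where $\Vert h_n w\Vert\to\infty$ while $h_n w/\Vert h_n w\Vert\to v_C'$. My approach here is a moment-map contradiction. Using the Cartan decomposition $\Hb=\Kb\exp(\pg_\Beta\cap\hg_\Beta)$, I will write $h_n=k_n'\exp(A_n)$ with necessarily $\Vert A_n\Vert\to\infty$, and extract subsequences so that $k_n'\to k_\infty'\in\Kb$ and $B_n:=A_n/\Vert A_n\Vert\to B_\infty\in\pg_\Beta\cap\hg_\Beta$. A spectral-projection analysis for the symmetric operators $B_n\to B_\infty$ (tracking eigenvalue clusters of $B_n$ near each eigenvalue of $B_\infty$) will show that $\exp(A_n)w/\Vert\exp(A_n)w\Vert$ converges to the normalized projection $\tilde w$ of $w$ onto the top eigenspace of $B_\infty$, whose eigenvalue $\mu_{\max}$ is strictly positive (forced by the blow-up of $\Vert\exp(A_n)w\Vert$). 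Hence $v_C'=k_\infty'\tilde w$, so $B^*:=\Ad(k_\infty')B_\infty\in\hg_\Beta\cap\pg$ satisfies $B^*\cdot v_C'=\mu_{\max}v_C'$. Plugging into~\eqref{eqn_defmm} gives $\la\mmm(v_C'),B^*\ra=\mu_{\max}>0$, whereas $\mmm(v_C')=\Beta$ and $B^*\in\hg_\Beta\perp\Beta$ give $\la\mmm(v_C'),B^*\ra=0$---a contradiction that closes the last subcase.
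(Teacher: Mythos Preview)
Your existence argument and your reduction of uniqueness to the relation $c_n\,h_n\cdot w\to v_C'$ in $\Vzero$ (with $c_n>0$, $h_n\in\Hb$, $w=p_\Beta(v)\in\Vzeross$, and $\mmm(v_C')=\Beta$) are correct and run parallel to the paper's proof. The paper reaches the same point more structurally: it invokes Corollary~\ref{cor_QbVnnss} to obtain $\overline{\G\cdot v}\cap\Vnnss=\overline{\Qb\cdot v}\cap\Vnnss$, applies $p_\Beta$, and then finishes with Theorem~\ref{thm_realGIT}(i),(iii) for the $\Hb$-action on $\Vzero$, never performing a case analysis on the scaling factors.

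The genuine gap in your argument is the subcase $c_n\to 0$. Both spectral assertions you rely on are false in the generality stated. First, the claim that $\exp(A_n)w/\Vert\exp(A_n)w\Vert$ converges to the normalized projection of $w$ onto the top eigenspace of $B_\infty$ fails already when that projection vanishes (take $B_n$ with fixed eigenvalues but eigenvectors slowly rotating toward $w$). Second, and more seriously, the claim that $\mu_{\max}(B_\infty)>0$ is ``forced by the blow-up of $\Vert\exp(A_n)w\Vert$'' is wrong: with $B_n=\operatorname{diag}(1/n,-1)/\sqrt{1+n^{-2}}\to B_\infty=\operatorname{diag}(0,-1)$, $t_n=n^2$, and $w=e_1$, one has $\Vert\exp(t_nB_n)w\Vert\to\infty$ while $\mu_{\max}(B_\infty)=0$. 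In that situation your final line reads $\la\mmm(v_C'),B^*\ra=\mu_{\max}=0$, which is perfectly consistent with $\mmm(v_C')=\Beta\perp B^*$, and no contradiction arises.

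It is true that this particular toy example cannot occur under your hypotheses: having both $B_\infty$ and $B_n$ in $\pg\cap\hg_\Beta$ forces $\operatorname{diag}(1,0)\in\hg_\Beta$, and then $\exp(-t\operatorname{diag}(1,0))\cdot e_1\to 0$ contradicts $e_1\in\Vzeross$. But this is precisely the point: ruling out $c_n\to 0$ requires the semistability hypothesis $w\in\Vzeross$ in an essential way, and your spectral argument makes no use of it. As written, the $c_n\to 0$ subcase is not closed.
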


\begin{proof}
Since $\Id_V\in \ggo$, it follows from Lemma \ref{lem_gradient} that the negative gradient flow of $\normmm$ is tangent to $\G$-orbits. Thus, existence of $v_C$ follows by Theorem \ref{thm_stratif}, (iii). 

Regarding uniqueness, assume without loss of generality 
 that $v\in \Vnnss$ (Lemma \ref{lem_WbetaQstable}) and let $v_C, w_C \in \overline{\G\cdot v} \cap \sca_\Beta$ be two critical points of unit norm. Up to the action of $\K$ we may assume $v_C, w_C \in \Vzero$ and
 $\mmm(v_C) = \mmm(w_C) = \Beta$ by Lemma \ref{lem_estimatemm}.
Lemma \ref{lem_equivWbeta} then implies that $v_C,w_C \in \Vzeross$, and from Corollary \ref{cor_QbVnnss} we deduce
\[
    \overline{\G \cdot v} \cap \Vnnss = \overline{\Qb \cdot v} \cap \Vnnss\,.
\]
This implies that $v_C, w_C \in \overline{\Qb \cdot v} \cap \Vzeross$. Using 
$\Gb=  \exp(\RR \Beta) \times \Hb$ and
Lemma \ref{lem_pbeta} we obtain that 
\[
    v_C = p_\Beta(v_C) \in p_\Beta \big(\overline{\Qb \cdot v} \big) \subset \overline{ p_\Beta(\Qb \cdot v)} = \RR_{>0} \cdot \overline{\Hb \cdot p_\Beta(v)},
\]
and the same for $w_C$. The result now follows from Theorem \ref{thm_realGIT}, (i) applied to the $\Hb$-action on $\Vzero$ (see also Lemma \ref{lem_mmZbeta}).
\end{proof}

\begin{appendix}

\section{Real reductive Lie groups}\label{app_reductive}

In this section we collect usefull properties of closed subgroups $\G\subset \Gl(V)$ that satisfy condition (\ref{eqn_Cartandec}), i.e.~ those we call \emph{real reductive Lie groups}.

Firstly, let us mention that there exist in the literature several non-equivalent definitions of this concept. To the best of our knowledge, we can mention at least four: those groups in the Harish-Chandra class \cite[$\S$ 3]{HC75};  Knapp's slightly more general definition \cite[Ch.~ VII, $\S$ 2]{Knapp2e}; Wallach's definition \cite{Wal88}; and Borel's definition \cite[$\S$ 6]{Bor06}. Usually the aim of these is to enlarge the class of real semisimple Lie groups to allow for inductive arguments, since most structural results remain valid for the more robust classes of real reductive groups. Of all of them perhaps the more succinct is that of Borel: a real reductive Lie group is one with finitely many connected components, and whose Lie algebra is reductive (the direct sum of a semisimple Lie subalgebra and its center).

Our interest in linear representations allow us to reduce ourselves to the case of linear groups (i.e.~ those contained in some $\Gl(V)$ for a vector space $V$), thus avoiding lots of technicalities of the general case. Let us also mention that linear groups satisfying Knapp's definition automatically satisfy ours. Conversely, recall that $\Or(n)$ for $n$ even is not real reductive in the sense of Knapp. Regarding a linear group satisfying Borel's definition, it will satisfy \eqref{eqn_Cartandec} provided the center acts on $V$ by semisimple endomorphisms (i.e.~ diagonalizable over $\CC$). 

A first immediate property of a group $\G$ satisfying \eqref{eqn_Cartandec} is that both $\G$ and $\ggo$ are closed under transpose (i.e.~ they are \emph{self-adjoint}, cf.~ \cite{Most55}). 
The following further property, whose proof is based on that given in \cite{Helgason01}, shows that subalgebras in $\pg$ are all conjugate by an element in $\K$:






\begin{proposition}\label{prop_conjugate}
Let $\tg_1, \tg_2 \subset \pg$ be two maximal abelian subalgebras. Then, there exist $k\in \K$ such that $ k \tg_1k^{-1}  = \tg_2$. In particular, for any maximal abelian subalgebra $\tg \subset \pg$ we have that
\[
    \pg = \bigcup_{k\in \K} k\,  \tg \,k^{-1}.
\]
\end{proposition}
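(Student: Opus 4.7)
The plan is to run a variational argument on the compact group $\K$ using \emph{regular} elements of each maximal abelian $\tg_i$, by which I mean elements $H_i \in \tg_i$ whose centralizer in $\pg$ is exactly $\tg_i$. First I would produce such elements. Since $\ad(H)$ is a symmetric endomorphism of $(\ggo, \ip)$ for each $H \in \tg_i \subset \pg$ by \eqref{eqn_assumipggo}, and these operators commute as $H$ ranges over the abelian $\tg_i$, they are simultaneously diagonalizable, yielding a finite set of nonzero weights $\alpha \in \tg_i^*$. For any $H_i \in \tg_i$ avoiding the hyperplanes $\ker \alpha$, the centralizer $Z_\ggo(H_i)$ coincides with the simultaneous zero-weight space $Z_\ggo(\tg_i)$; intersecting with $\pg$ and invoking maximality of $\tg_i$ (any $X \in \pg$ commuting with $\tg_i$ would extend $\tg_i$ to an abelian $\tg_i + \RR X \subset \pg$) gives $Z_\pg(H_i) = \tg_i$.

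With regular $H_i \in \tg_i$ ($i = 1, 2$) fixed, I would consider the smooth function
\[
    f : \K \to \RR, \qquad f(k) := \Vert k H_1 k^{-1} - H_2\Vert^2.
\]
Since $\K = \G \cap \Or(V, \ip)$ is closed in the compact group $\Or(V, \ip)$, it is itself compact, so $f$ attains a minimum at some $k_0 \in \K$. Differentiating $f(\exp(tX)k_0)$ at $t = 0$ for $X \in \kg$, and using skew-symmetry of $\ad(X)$ (which forces $\la [X, A], A\ra = 0$), reduces the critical-point condition to $\la [X, A], H_2\ra = 0$ for all $X \in \kg$, where $A := k_0 H_1 k_0^{-1} \in \pg$. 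This is the technical heart of the argument: skew-symmetry of $\ad(X)$ rewrites the identity as $\la A, [X, H_2]\ra = 0$, and then the \emph{symmetry} of $\ad(H_2)$ on $\ggo$ (the second half of \eqref{eqn_assumipggo}) transforms it into $\la [H_2, A], X\ra = 0$ for every $X \in \kg$. Since $[\pg, \pg] \subseteq \kg$, the bracket $[H_2, A]$ lies in $\kg$ and is orthogonal to all of $\kg$, forcing $[H_2, A] = 0$.

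The conclusion is then swift: regularity of $H_2$ gives $A \in Z_\pg(H_2) = \tg_2$, while $\Ad(k_0)$ acting as a Lie-algebra automorphism of $\pg$ identifies $Z_\pg(A) = k_0 \tg_1 k_0^{-1}$, which contains the abelian $\tg_2$; maximality of $\tg_2$ forces $k_0 \tg_1 k_0^{-1} = \tg_2$. The second assertion of the proposition follows at once: any $X \in \pg$ lies in some maximal abelian $\tg_X \subset \pg$ by Zorn's Lemma, and the first part conjugates $\tg_X$ onto $\tg$ by some $k \in \K$, whence $X \in k^{-1} \tg k$. The main subtlety I anticipate is precisely the delicate interplay of the two different invariance properties encoded in \eqref{eqn_assumipggo}, since $\ip$ on $\ggo$ is not fully $\ad$-invariant, and one must use \emph{both} the skew-symmetry of $\ad(\kg)$ and the symmetry of $\ad(\pg)$ in the correct order to extract the commutator identity from the minimization.
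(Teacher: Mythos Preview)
Your proof is correct and follows essentially the same approach as the paper: both arguments pick regular elements $H_i\in\tg_i$, optimize a $\K$-invariant functional over the compact group $\K$, and use the interplay of $\ad(\kg)\subset\sog(\ggo)$ and $\ad(\pg)\subset\sym(\ggo)$ from \eqref{eqn_assumipggo} to force $[H_2,\Ad(k_0)H_1]=0$. The only cosmetic difference is that the paper minimizes the inner product $k\mapsto\la kH_1k^{-1},H_2\ra$ rather than your distance squared $\Vert kH_1k^{-1}-H_2\Vert^2$, but since $\Ad(k)$ is an isometry these two functionals differ by a constant and yield identical first-order conditions.
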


\begin{proof}
Recall, that we have by assumption an $\Ad(\K)$-invariant scalar product $\ip$ on $\ggo$. Let $H_i \in \tg_i$ be generic, so that $Z_\pg (H_i) := \{ A \in \pg : [A,H_i] = 0\} = \tg_i$, $i=1,2$. 
Then, there exist $k\in \K$ which minimizes $d:\K \to \RR\,;\,\,k \mapsto \la k H_1 k^{-1}, H_2\ra $, since $\K$ is compact.
At the infinitesimal level this implies that
\[
    \big\la [Z, k H_1 k^{-1}], H_2 \big\ra = 0,
\]
for all $Z \in \kg$. We deduce $ \big\la [H_2, k H_1 k^{-1}], Z\big\ra=0$
for all $Z\in \kg$,
using \eqref{eqn_assumipggo} and that $H_i \in \pg \subset \Sym(V)$, $Z\in \kg \subset \sog(V)$. Since $[\pg,\pg] \subset \kg$, we have $[H_2, k H_1 k^{-1}] \in \kg$, thus $[H_2, k H_1 k^{-1}] = 0$. By definition of $H_2$ this yields $k H_1 k^{-1} \in \tg_2$. For any $A_2 \in \tg_2$ we deduce $[k^{-1} A_2 k, H_1] = 0$,
from which $A_2 \in \tg_1$ by definition of $H_1$. Therefore, $k^{-1} \tg_2 k \subset \tg_1$, which also reads as $\tg_2 \subset k \tg_1 k^{-1}$. By maximality of $\tg_2$ this implies that $\tg_2 = k \tg_1 k^{-1}$. 

The last assertion follows from the previous one, by choosing any $A\in \pg$ and extending $\RR A$ to a maximal abelian subalgebra.
\end{proof}

\begin{corollary}\label{cor_KTK}
For any maximal abelian subalgebra $\tg \subset \pg$ we have that $\G = \K \T \K$, where $\T:= \exp(\tg)$.
\end{corollary}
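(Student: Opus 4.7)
The plan is to combine the Cartan decomposition \eqref{eqn_Cartandec} with the conjugacy statement in Proposition \ref{prop_conjugate}. Given any $g\in\G$, write $g = k\exp(X)$ with $k\in\K$ and $X\in\pg$, using \eqref{eqn_Cartandec}. The goal is then to replace $\exp(X)$ by something of the form $k_1 t k_1^{-1}$ with $t\in\T$ and $k_1\in\K$.

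For this I would invoke the second part of Proposition \ref{prop_conjugate}, which says $\pg = \bigcup_{k\in\K} k\,\tg\,k^{-1}$. Applying this to $X\in\pg$ yields $k_1\in\K$ and $H\in\tg$ with $X = k_1 H k_1^{-1}$. Since $\exp$ is equivariant under conjugation by invertible endomorphisms, we obtain
\[
    \exp(X) = \exp(k_1 H k_1^{-1}) = k_1 \exp(H) k_1^{-1} = k_1 \, t \, k_1^{-1},
\]
where $t := \exp(H)\in\T$.

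Combining, $g = k\,k_1\,t\,k_1^{-1} \in \K\T\K$, and since $g\in\G$ was arbitrary, $\G\subset\K\T\K$. The reverse inclusion is immediate from $\K\subset\G$ and $\T=\exp(\tg)\subset\exp(\pg)\subset\G$. There is no serious obstacle here: the only non-trivial input is Proposition \ref{prop_conjugate}, which has already been established; the remainder is just the functoriality of $\exp$ under conjugation and the Cartan decomposition.
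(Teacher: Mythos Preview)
Your argument is correct and matches the paper's proof exactly: the paper simply notes that $\G = \K\exp(\pg)$ and that Proposition~\ref{prop_conjugate} gives $\exp(\pg)\subset \K\T\K$, which is precisely what you spell out in detail.
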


\begin{proof}
We have $\G = \K \,\exp(\pg)$ and Proposition \ref{prop_conjugate} imples that $\exp(\pg) \subset \K \T \K$.
\end{proof}

Let us now fix a maximal abelian subalgebra $\tg \subset \pg$. Choose an orthonormal basis for $(V,\ip)$ such that $\tg$ is contained in $\dg$, the set of diagonal matrices in $\glg(V)$. Moreover, by maximality of $\tg$ it also holds that $\tg = \ggo \cap \dg$. Denote by $\ug$ (resp.~ $\ug^t$) the nilpotent subalgebra of $\glg(V)$ of strictly lower (resp.~ upper) triangular matrices, and by $\mathsf{U} := \exp(\ug)$ the corresponding analytic subgroup of $\Gl(V)$.

We now look at the root space decomposition of $\glg(V)$ with respect to $\tg$. More precisely, $\ad(\tg)$ is a commuting family of symmetric endomorphisms of $\glg(V)$, hence there exists a finite subset $\Sigma \subset \tg^*$ and a decomposition into common eigenspaces
\[
    \glg(V) = \bigoplus_{\lambda \in \Sigma} \glg(V)_\lambda,
\]
where for $E\in \tg$ we have $\ad(E)|_{\glg(V)_\lambda} = \lambda(E) \cdot \Id_{\glg(V)_\lambda}$. Since for any $E\in \tg$ we have that $\ad(E)|_\ug = -\ad(E)|_{\ug^t}$, as is well-known $\Sigma$ can be decomposed as a disjoint union $\Sigma = \{ 0\} \cup \Sigma^+ \cup \Sigma^-$, such that
\[
    \tg \subset \glg(V)_0, \qquad \bigoplus_{\lambda \in \Sigma^+} \glg(V)_\lambda \subset \ug, \qquad \bigoplus_{\lambda \in \Sigma^-} \glg(V)_\lambda \subset \ug^t.
\]

Consider now the corresponding subalgebras of $\ggo$, $\ngo := \ggo \cap \ug$, $\ngo^t := \ggo \cap \ug^t$, and the analytic subgroups $\T := \exp(\tg)$, $\mathsf{N} := \exp(\ngo) = (\G \cap \mathsf{U})_0$, corresponding to $\tg, \ngo$, respectively. Since $\ad(\tg)$ preserves $\ggo$, we have the induced root space decomposition
\[
    \ggo = \bigoplus_{\lambda \in \Sigma} \ggo_\lambda,
\]
where now some $\ggo_\lambda$ might be trivial. The above discussion implies that 
\[
 \ngo = \bigoplus_{\lambda \in \Sigma^+} \ggo_\lambda\,\,\,,
 \quad \ngo^t = \bigoplus_{\lambda \in \Sigma^-} \ggo_\lambda
 \quad \textrm{and}\quad \tg = \ggo_0\,, 
\] 
thus $\ggo = \tg \oplus \ngo \oplus \ngo^t$.

\begin{proposition}\label{prop_Iwasawa}
If $\G\subset \Gl(V)$ is closed and satisfies  \eqref{eqn_Cartandec}, then $\G = \K \T \mathsf{N}$.
\end{proposition}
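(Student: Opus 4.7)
The plan is to lift the Iwasawa decomposition from the Lie algebra to the group level by showing that $\K\T\mathsf{N}$ is simultaneously open and closed in $\G$ and meets every connected component. I would begin by establishing the Lie algebra Iwasawa $\ggo = \kg \oplus \tg \oplus \ngo$. From the decomposition $\ggo = \tg \oplus \ngo \oplus \ngo^t$ given in the excerpt, for any $Y \in \ngo^t$ write $Y = -X^t$ with $X \in \ngo$; since $\ggo$ is closed under transpose by \eqref{eqn_Cartandec}, the element $X - X^t \in \ggo$ is skew-symmetric, hence in $\kg$. Thus $Y = (X - X^t) - X \in \kg + \ngo$, giving $\ggo = \kg + \tg + \ngo$. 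Directness follows because any $Z \in \kg \cap (\tg \oplus \ngo)$ is both skew-symmetric and lower triangular with diagonal in $\tg$: the skew-symmetry kills the diagonal part, and then the strict lower part as well.

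Next I would show the multiplication map $m : \K \times \T \times \mathsf{N} \to \G$, $(k,a,n) \mapsto kan$, has surjective differential at every point and is therefore open. Using left-invariant identifications, the image of $dm|_{(k_0, a_0, n_0)}$ left-translated to the identity equals $\Ad((a_0 n_0)^{-1})\kg + \Ad(n_0^{-1})\tg + \ngo \subset \ggo$. Applying the isomorphism $\Ad(a_0 n_0)$, this equals $\ggo$ if and only if $\kg + \Ad(a_0)\tg + \Ad(a_0 n_0)\ngo = \ggo$. Now $\Ad(a_0)\tg = \tg$ by abelianness of $\T$, and $\Ad(a_0 n_0)\ngo = \ngo$ because $\T$ normalizes $\ngo$ (via the root space decomposition: $\Ad(\T)$ preserves each $\ggo_\lambda$) and $\mathsf{N}$ does too ($\ngo$ being a subalgebra). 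Hence the condition reduces to $\kg + \tg + \ngo = \ggo$, which is the Lie algebra Iwasawa just proved.

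For closedness, suppose $k_i a_i n_i \to g$ in $\G$. By compactness of $\K$ I may pass to a subsequence with $k_i \to k \in \K$, so $a_i n_i \to k^{-1} g$. Since each $n_i$ is lower unitriangular, the diagonal part of $a_i n_i$ equals $a_i$, so $a_i$ converges to a diagonal matrix $a$ whose entries are positive (being nonzero limits of positive numbers, by invertibility of $k^{-1} g$); closedness of $\T$ in $\Gl(V)$ then yields $a \in \T$. Consequently $n_i = a_i^{-1}(a_i n_i) \to a^{-1} k^{-1} g$ lies in $\mathsf{N}$, because $\mathsf{N} = \exp(\ngo)$ is closed in $\Gl(V)$ ($\exp$ being a polynomial diffeomorphism on the nilpotent algebra $\ngo$). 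Finally, since $\exp(\pg)$ is connected (diffeomorphic to $\pg$ by the Cartan decomposition), every connected component of $\G$ contains an element of $\K \subset \K\T\mathsf{N}$, so the open-and-closed set $\K\T\mathsf{N}$ exhausts $\G$. The main obstacle is the $\Ad$-bookkeeping in the openness step, which ultimately hinges on the directness of the Lie algebra Iwasawa.
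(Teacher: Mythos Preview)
Your argument is correct and follows essentially the same route as the paper: both establish the Lie algebra decomposition $\ggo = \kg \oplus \tg \oplus \ngo$ via the transpose trick on negative root spaces, deduce from it that $\K\T\mathsf{N}$ is open (the paper phrases this as ``$\T\mathsf{N}$ is a closed connected subgroup with $\K\cap \T\mathsf{N}=\{e\}$''), and then pass to the non-connected case using that $\exp(\pg)$ is connected so that $\G/\G_0 \simeq \K/\K_0$. Your treatment is in fact more explicit than the paper's, spelling out the $\Ad$-bookkeeping for surjectivity of $dm$ at every point and the sequential argument for closedness, where the paper simply asserts that $\T\mathsf{N}$ is closed.
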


\begin{proof}
Assume first that $\G$ is connected. Since $\tg \oplus \ngo$ is a subalgebra of $\ggo$, it follows that $\T \mathsf{N}$ is a closed connected subgroup of $\G$, intersecting $\K$ trivially: the only lower triangular orthogonal matrix with positive eigenvalues is the identity. The result in this case would follow provided we show that $\ggo = \kg \oplus \tg \oplus \ngo$. To that end, let $E\in \ggo = \bigoplus_{\lambda \in \Sigma} \ggo_\lambda$, say $E\in \ggo_\lambda$. If $\lambda = 0$ then $E\in \tg$. If $\lambda \in \Sigma^+$ then $E\in \ngo$. And finally, for $\lambda \in \Sigma^-$, we have $E^t \in \ngo$ thus 
\[
    E = (E-E^t) + E^t \in \kg \oplus \ngo.
\]
The general case follows from the previous one: we know that $\G_0 = \K_0 \T \mathsf{N}$. On the other hand, the connected component of the identity $\G_0$ is given by $\G_0 = \K_0 \exp(\pg)$ thanks to  \eqref{eqn_Cartandec}, thus $\G / \G_0 \simeq \K / \K_0$ and hence $\G = \K \T \mathsf{N}$.
\end{proof}

\section{The parabolic subgroup $\Qb$}\label{app_groups}

Let $\G \subset \Gl(V)$ be a closed subgroup satisfying  \eqref{eqn_Cartandec}
with Lie algebra $\ggo \subset \glg(V)$ and Cartan decomposition $\ggo = \kg \oplus \pg$, $\kg = \ggo \cap \sog(V)$, $\pg = \ggo \cap \Sym(V)$. 
We will describe in this section some important subgroups of $\G$ associated with a fixed element $\Beta \in \pg$. 

Consider for such a fixed $\Beta \in \pg$ the adjoint map 
\[
    \ad(\Beta) : \ggo \to \ggo, \qquad A \mapsto [\Beta, A].
\] 
Assume that $\ad(\Beta) : \ggo \to \ggo$ is a symmetric endomorphism with respect to some scalar product on $\ggo$: see \eqref{eqn_assumipggo}.
If $\ggo_r$ is the eigenspace of $\ad(\Beta)$ with eigenvalue $r\in \RR$ then $\ggo = \bigoplus_{r\in \RR} \ggo_r$, and we set
\[
    \ggo_\Beta := \ggo_0 = \ker (\ad(\Beta) ), \qquad \ug_\Beta := \bigoplus_{r> 0} \ggo_r, \qquad \qg_\Beta := \ggo_\Beta \oplus \ug_\Beta.
\]

\begin{definition}\label{def_groupsapp}
We denote by 
\[
    \Gb := \{ g \in \G : g \Beta g^{-1} = \Beta \}, \quad \Ub := \exp(\ug_\Beta)\quad \textrm{and}
     \quad \Qb := \Gb \Ub
\] 
the centralizer of $\Beta$ in $\G$, the unipotent subgroup associated with $\Beta$, and the parabolic subgroup associated with $\Beta$, respectively. 
\end{definition}

To describe these groups more explicitly, let us decompose $V = V_1 \oplus \cdots  \oplus V_m$ as a sum of $\Beta$-eigenspaces corresponding to the real eigenvalues $\lambda_1 < \cdots < \lambda_m$
with multiplicities $n_1,\ldots, n_m \in \NN$.
For simplicity we suppose that $m=3$, the general case being completely analogous.
 With respect to a suitable orthonormal basis of $V$ the map $\beta = (\lambda_1 \Id_1,\lambda_2\Id_2,\lambda_3 \Id_3)$ is diagonal, and  we have 
\begin{eqnarray*}
    \Gb &=& \G \cap \left\{ 
              \left( \begin{array}{ccc}
            g_1 & 0 & 0 \\ 0 & g_2 &  0\\ 0 & 0 &  g_3
            \end{array}\right)  : g_i \in \Gl(V_i)
        \right\},   \\
    \Ub &=& \G \cap \left\{ 
              \left( \begin{array}{ccc}
            \Id_1 & 0 & 0 \\ g_{21} & \Id_2 &  0\\ g_{31} & g_{32} &  \Id_3
            \end{array}\right) : g_{ij} \in \End(V_j,V_i), \,\, i>j
        \right\},   \\
    \Qb &=& \G \cap \left\{ 
              \left( \begin{array}{ccc}
            g_{11} & 0 & 0 \\ g_{21} & g_{22} &  0\\ g_{31} & g_{32} &  g_{33}
            \end{array}\right)  : g_{ij} \in \End(V_j,V_i) \mbox{ if } i>j,  \,\,  g_{ii} \in \Gl(V_i)
        \right\}.
\end{eqnarray*}

\begin{lemma}\label{lem_groupsGGbeta}
The groups $\Gb, \Ub$, $\Qb$ are closed in $\G$, and their Lie algebras are given respectively by $\ggo_\Beta$, $\ug_\Beta$, $\qg_\Beta$.  We have that
\[
    \G = \K \Qb, \qquad \K \cap \Qb = \K \cap \Gb =: \Kb.
\] 
Moreover, $\Ub$ is connected and normal in $\Qb$, $\Gb$ is reductive and satisfies \eqref{eqn_Cartandec}, and we have $\Ub \cap \Gb = \{ \Id_V \}$. 
\end{lemma}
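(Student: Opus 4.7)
The plan is to fix an orthonormal basis of $V$ adapted to the $\Beta$-eigenspace decomposition $V = V_1 \oplus \cdots \oplus V_m$ with eigenvalues $\lambda_1 < \cdots < \lambda_m$, so that $\Beta$ is block-diagonal and any $A\in \ggo_r$ sends $V_j$ into $V_i$ only when $\lambda_i-\lambda_j=r$. Then $\Gb$ is the intersection of $\G$ with the closed subgroup of block-diagonal matrices, hence is closed in $\G$, with Lie algebra $\{A\in \ggo: [\Beta,A]=0\}=\ggo_\Beta$. The subspace $\ug_\Beta=\bigoplus_{r>0}\ggo_r$ is a Lie subalgebra of the nilpotent algebra of strictly block-lower-triangular matrices, on which $\exp$ is a polynomial global diffeomorphism; hence $\Ub=\exp(\ug_\Beta)$ is a closed, connected unipotent subgroup of $\G$ with Lie algebra $\ug_\Beta$. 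The intersection $\Gb\cap\Ub=\{\Id\}$ is immediate: a block-diagonal matrix that is also strictly block-lower-unipotent must be the identity.

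The next step is to handle $\Qb$. Because $\Ad(\Gb)$ commutes with $\ad(\Beta)$, it preserves $\ug_\Beta$, so $\Gb$ normalizes $\Ub$; hence $\Qb=\Gb\Ub$ is a subgroup containing $\Ub$ as a normal subgroup, with Lie algebra $\ggo_\Beta\oplus\ug_\Beta=\qg_\Beta$. For closedness I would establish the characterization
\[
    \Qb = \bigl\{ g \in \G : \lim_{t\to\infty} \exp(-t\Beta)\, g\, \exp(t\Beta) \text{ exists in } \Gl(V)\bigr\}.
\]
For $\subseteq$: $\Gb$ commutes with $\exp(t\Beta)$, and for $u=\exp(X)\in\Ub$ with decomposition $X=\sum_{r>0}X_r$, $X_r\in\ggo_r$, one has $\exp(-t\Beta)u\exp(t\Beta)=\exp\bigl(\sum_{r>0}e^{-tr}X_r\bigr)\to \Id$. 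For $\supseteq$: the limit $d(g)$ lies in $\G$ (closedness) and commutes with $\Beta$ (by shifting $t$), hence $d(g)\in \Gb$; setting $u:=d(g)^{-1}g\in \G$, one has $\exp(-t\Beta)u\exp(t\Beta)\to \Id$, and comparing block entries in the $\Beta$-eigenbasis forces $u$ to be block-lower-triangular with identity diagonal blocks, in particular unipotent. I expect the main obstacle to be showing $Y:=\log u\in \ggo$. For this I would decompose $Y=\sum_{r>0}Y_r$ along eigenspaces of $\ad(\Beta)$ in $\glg(V)$ (only $r>0$ occur, by block-lower-triangularity), observe that $c(t):=\Ad(\exp(-t\Beta))Y=\sum_{r>0}e^{-tr}Y_r\to 0$ as $t\to\infty$ so that $c(t)\in\ggo$ for all large $t$ (using that near $0\in\glg(V)$ the inverse of $\exp$ is $\log$, and $\exp(c(t))\in\G$), and then successively differentiate in $t$ and apply a Vandermonde argument in the distinct positive exponents $r$ to conclude that each $Y_r\in\ggo$. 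Hence $Y\in\ug_\Beta$ and $u\in\Ub$, as desired.

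The remaining assertions are straightforward. Any $g\in \Gb$ preserves each $V_i$ and is thus block-diagonal; using the uniqueness of the polar decomposition of $g$ in $\Gl(V)$, both its positive-symmetric and orthogonal factors are block-diagonal, from which one reads off $X\in \pg_\Beta$ and $k\in\Kb$ in the Cartan decomposition $g=k\exp(X)$, giving $\Gb=\Kb\exp(\pg_\Beta)$ and hence \eqref{eqn_Cartandec}. For $\G=\K\Qb$, I apply the Iwasawa decomposition $\G=\K\T\mathsf{N}$ of Proposition \ref{prop_Iwasawa}, choosing $\tg\subset\pg$ maximal abelian containing $\Beta$ and a positive system $\Sigma^+$ such that $\lambda(\Beta)\geq 0$ for every $\lambda\in\Sigma^+$. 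Each $\ggo^\lambda$ with $\lambda\in\Sigma^+$ then lies in $\ug_\Beta$ (if $\lambda(\Beta)>0$) or in $\ggo_\Beta$ (if $\lambda(\Beta)=0$), so $\ngo\subset\qg_\Beta$ and by the Lie correspondence $\mathsf{N}\subset\Qb$, whence $\G=\K\T\mathsf{N}\subset\K\Qb$. Finally, for $\K\cap\Qb=\Kb$: any $k\in\K\cap\Qb$ is block-lower-triangular, and iterating the orthogonality relation $kk^t=\Id$ block-row by block-row from the top forces every off-diagonal block of $k$ to vanish, so $k$ is block-diagonal, hence lies in $\Kb$.
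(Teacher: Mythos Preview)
Your argument is correct and follows the paper's overall strategy: block-matrix descriptions in a $\Beta$-adapted basis, Iwasawa for $\G=\K\Qb$, and transpose/orthogonality considerations for $\K\cap\Qb=\Kb$ and for the Cartan decomposition of $\Gb$. You go beyond the paper in one respect: the paper simply records the block description of $\Qb$ before the lemma and does not justify that every block-lower-triangular $g\in\G$ actually factors through $\Gb\Ub$, whereas you establish this rigorously via the limit characterization (which is the content of the paper's subsequent Lemma~\ref{lem_groupslimit}) together with a Vandermonde argument showing $\log u\in\ggo$; this last step is genuine additional work not spelled out in the paper. Your proof of $\K\cap\Qb=\Kb$ is also slightly different in flavor: you argue directly that an orthogonal block-lower-triangular matrix must be block-diagonal, while the paper instead shows $\Qb\cap\Qb^t=\Gb$ and then intersects with $\K$.
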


\begin{proof}
A simple computation shows that $[\ggo_r, \ggo_s] \subset \ggo_{r+s}$, thus $\ug_\Beta$ is a Lie subalgebra of $\ggo$ and an ideal in $\qg_\Beta$. Hence $\Ub$ is a closed subgroup of $\G$, which is normal in $\Qb$. The claims for $\Gb$ are well-known.

The fact that $\G = \K \Qb$ follows at once from Proposition \ref{prop_Iwasawa}, applied to a maximal abelian subalgebra $\tg \subset \pg$ containing $\Beta$: in this case, we have that $\T \mathsf{N} \subset \Qb$.

If $g\in \Gb \cap \Ub$, say $g = \exp(N)$ with $N\in \ug_\Beta$, then $g$ preserves the eigenspaces of $\Beta$, thus so does $N$. Hence $N\in \ggo_\Beta$, therefore $N = 0$ and $g = \Id_V$. 

Now observe that $\Gb^t = \Gb$, so in particular we also have $\Gb \cap \Ub^t = \{ \Id_V\}$. Also, since $\ggo_r^t = \ggo_{-r}$, we obtain in the same way as above that $\Ub^t \cap \Qb = \{ \Id_V \}$.  Since $\Ub$ is normal in $\Qb$ we may write $\Qb = \Gb \Ub = \Ub \Gb$, and in particular $\Qb^t = \Gb \Ub^t$. From this observations it follows that $\Qb \cap \Qb^t = \Gb$, and it is now clear that $\K \cap \Qb = \Kb$.

Finally, if we define 
 $\pg_\Beta = \ggo_\Beta \cap \Sym(V) = \ggo_\Beta \cap \pg$, from the fact that $\Gb^t = \Gb$ we clearly have that $\Gb = \Kb \exp(\pg_\Beta)$.
\end{proof}

The following characterization turns out to be very useful in the applications.

\begin{lemma}\label{lem_groupslimit}
For $\Beta \in \pg$ we have that
\begin{eqnarray*}
        \Gb &=& \{ g\in \G : g \exp(\Beta) g^{-1} = \exp(\Beta) \}, \\
        \Ub &=& \big\{ g\in \G : \lim_{t\to\infty} \exp(-t \beta) g \exp(t\beta) = \Id \big\},    \\
        \Qb &=& \big\{ g\in \G : \lim_{t\to\infty} \exp(-t \beta) g \exp(t\beta) \hbox{ exists} \big\}.
\end{eqnarray*}
\end{lemma}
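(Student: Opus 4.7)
The strategy is to exploit the block decomposition $V = V_1 \oplus \cdots \oplus V_m$ into the distinct $\Beta$-eigenspaces with eigenvalues $\lambda_1 < \cdots < \lambda_m$, together with the explicit matrix descriptions of $\Gb$, $\Ub$ and $\Qb$ recalled just before the lemma. The computations are an elementary block version of the observation that conjugating a matrix by $\exp(t\Beta)$ rescales the $(i,j)$-block by the factor $e^{t(\lambda_j-\lambda_i)}$.

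First I would dispatch the characterization of $\Gb$. Since the real eigenvalues $\lambda_1,\dots,\lambda_m$ of $\Beta$ are pairwise distinct, so are the eigenvalues $e^{\lambda_1},\dots,e^{\lambda_m}$ of $\exp(\Beta)$; hence both endomorphisms have exactly the same eigenspaces (namely the $V_i$), and therefore exactly the same centralizer in $\Gl(V)$. Intersecting with $\G$ gives the first identity.

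For the other two identities, I would write any $g\in\G$ in blocks as $g=(g_{ij})_{i,j=1}^m$ with $g_{ij}\colon V_j\to V_i$, so that a direct calculation gives
\[
\exp(-t\Beta)\,g\,\exp(t\Beta)=\bigl(e^{t(\lambda_j-\lambda_i)}\,g_{ij}\bigr)_{i,j}.
\]
Since $\lambda_j-\lambda_i$ is positive for $i<j$, negative for $i>j$ and zero for $i=j$, letting $t\to\infty$ the $(i,j)$-block diverges unless $g_{ij}=0$ when $i<j$, tends to $0$ when $i>j$, and equals $g_{ii}$ when $i=j$. Consequently the limit exists if and only if $g$ is block-lower-triangular, in which case it equals the block-diagonal matrix $\mathrm{diag}(g_{11},\dots,g_{mm})$; furthermore this limit equals $\Id_V$ precisely when in addition every diagonal block $g_{ii}$ equals $\Id_{V_i}$. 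Comparing with the matrix descriptions of $\Qb$ and $\Ub$ stated just before the lemma yields the second and third identities.

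The only subtle point is the reverse inclusion implicit in the matrix description of $\Ub$: every $g\in\G$ which is unipotent and block-lower-triangular actually lies in $\Ub=\exp(\ug_\Beta)$. This rests on the observation that for such $g$ the logarithm $N:=\log(g)$ is a finite polynomial in $g-\Id$, is strictly block-lower-triangular, and by considering the smooth curve $t\mapsto \exp(-t\Beta)g\exp(t\Beta)\in\G$ (which tends to $\Id$ with exponential decay of its deviation from $\Id$) one can recover $N$ as the limit, after appropriate rescaling of the eigencomponents, of elements of $\ggo$, forcing $N\in\ggo\cap\{\text{strictly block-lower-triangular}\}=\ug_\Beta$. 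With this structural fact (which is also exactly what justifies the matrix description taken from the paragraph above), the lemma follows immediately from the block calculation.
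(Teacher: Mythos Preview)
Your proof is correct and follows the same block-matrix computation as the paper's own proof. You additionally flag (and sketch an argument for) the reverse inclusion for $\Ub$---that every $g\in\G$ which is unipotent and block-lower-triangular actually lies in $\exp(\ug_\Beta)$---a point the paper asserts in its explicit matrix description of $\Ub$ but does not justify. Your sketch for this step is correct in spirit but can be tightened: for large $t$ the conjugate $c(t)=\exp(-t\Beta)\,g\,\exp(t\Beta)\in\G$ is close to $\Id$, so $\log c(t)\in\ggo$ by the local exponential chart of the closed subgroup $\G\subset\Gl(V)$; since $\log c(t)=\Ad(\exp(-t\Beta))(\log g)$ and $\ggo$ is $\Ad(\G)$-invariant, one gets $\log g\in\ggo$ (hence $\log g\in\ug_\Beta$ and $g\in\Ub$) directly, without any rescaling-and-limiting procedure.
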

\begin{proof} 
Notice that $g\in \Gb$ if and only if $g$ preserves all the eigenspaces of $\Beta$. Since these coincide with the eigenspaces of $\exp(\Beta)$, the first assertion follows. For the other two claims, as above we work in the case where $\Beta$ has only $3$ different eigenvalues, bearing in mind that the general case follows in exactly the same way. Adopting the same notation as in the paragraph following Definition \ref{def_groupsapp}, for $g=(g_{ij})_{1\leq i,j \leq 3} \in \Gl(V)$ we have that the limit
\begin{eqnarray*}
\lefteqn{  \left( \begin{array}{ccc}
    e^{-t\lambda_1}\!\! && \\ &\!\! e^{-t\lambda_2}\!\! & \\ &&\!\! e^{-t\lambda_3}
 \end{array}\right)
 \cdot 
  \left( \begin{array}{ccc}
    g_{11} &g_{12 }& g_{13 }\\ g_{21 }& g_{22 }& g_{23} \\ g_{31}& g_{32 }& g_{33}
 \end{array}\right)
 \cdot 
  \left( \begin{array}{ccc}
    e^{t\lambda_1}\!\! && \\ &\!\! e^{t\lambda_2} \!\!& \\ &&\!\! e^{t\lambda_3}
 \end{array}\right) } &&\\
 &&
  =\left( \begin{array}{ccc}
    g_{11} &e^{-t(\lambda_1-\lambda_2)}g_{12 }&e^{-t(\lambda_1-\lambda_3)} g_{13 } \\
     e^{-t(\lambda_2-\lambda_1)}g_{21 }& g_{22 }&e^{-t(\lambda_2-\lambda_3)} g_{23}  \\ 
     e^{-t(\lambda_3-\lambda_1)}g_{31}& e^{-t(\lambda_2-\lambda_3)}g_{32 }& g_{33}
 \end{array}\right) \longrightarrow
 \left( \begin{array}{ccc}
    g_{11} &&\\
     & g_{22 }& \\ 
    && g_{33}
 \end{array}\right)
\end{eqnarray*}
as $t \to \infty$ exists if and only if $g_{12}=g_{13}=g_{13}=0$, and it is $\Id_V$ if and only if in addition we have $g_{ii} = \Id_{V_i}$ for $i=1,2,3$.
\end{proof}

\end{appendix}

\bibliography{../bib/ramlaf2}
\bibliographystyle{amsalpha}

\end{document}